\newtheoremstyle{normal}{5pt}{5pt}{\normalfont}{}{\bfseries}{}{0.4em}{\bfseries{\thmname{#1}\thmnumber{ #2}.\thmnote{ \hspace{0.5em}(#3)\newline}}}
\newtheoremstyle{kursiv}{5pt}{5pt}{\itshape}{}{\bfseries}{}{0.4em}{\bfseries{\thmname{#1}\thmnumber{ #2}.\thmnote{ \hspace{0.5em}(#3)\newline}}}
\theoremstyle{kursiv}
\newtheorem*{thmA}{Theorem A}
\newtheorem*{thmB}{Theorem B}
\newtheorem{theorem}{Theorem}[section]
\newtheorem{lemma}[theorem]{Lemma}
\newtheorem{proposition}[theorem]{Proposition}
\newtheorem{observation}[theorem]{Observation}
\theoremstyle{normal}
\newtheorem{definition}[theorem]{Definition}
\newtheorem{example}[theorem]{Example}
\newtheorem{remark}[theorem]{Remark}
\newcommand{\ind}[1]{\operatorname{ind}_{#1}\nolimits}
\newcommand{\proj}[1]{\operatorname{proj}_{#1}\nolimits}
\newcommand{\supp}{\operatorname{supp}\nolimits}
\newcommand{\mybinom}[2]{\Bigl(\begin{array}{@{}c@{}}#1\\#2\end{array}\Bigr)}
\renewcommand{\Re}{\operatorname{Re}\nolimits}
\newcommand{\id}{\operatorname{id}\nolimits}
\newcommand{\co}{\operatorname{co}\nolimits}
\newcommand{\ptw}{\operatorname{ptw}\nolimits}
\newcommand{\I}{\operatorname{ind}\nolimits}
\newcommand{\s}{\operatorname{sum}\nolimits}
\newcommand{\lin}{\operatorname{span}\nolimits}
\newcommand{\SR}{\operatorname{\mathcal{S}(\mathbb{R})}\nolimits}
\newcommand{\CPa}{\operatorname{(\text{CP}_1)}\nolimits}
\newcommand{\CPb}{\operatorname{(\text{CP}_2)}\nolimits}
\newcommand{\ACP}{\operatorname{(\text{ACP})}\nolimits}
\newcommand{\T}{(T(t))_{t\geqslant0}}
\newcommand{\Se}{(S(t))_{t\geqslant0}}
\renewcommand{\epsilon}{\varepsilon}
\begin{document}

\title{Asymptotics of Evolution Equations\\beyond Banach spaces}

\author{Birgit Jacob\hspace{1pt}\MakeLowercase{$^{\text{a,1}}$} and Sven-Ake Wegner\hspace{0.5pt}\MakeLowercase{$^{\text{b}}$}}

\renewcommand{\thefootnote}{}
\hspace{-1000pt}\footnote{\hspace{5.5pt}2010 \emph{Mathematics Subject Classification}: Primary 47D06; Secondary 46A04, 34G10.}
\hspace{-1000pt}\footnote{\hspace{5.5pt}\emph{Key words and phrases}: semigroups on locally convex space, asymptotic behavior, exponential stability.\vspace{1.6pt}}

\hspace{-1000pt}\footnote{\hspace{-6.8pt}$^{\text{a},1}$\,Corresponding author: Bergische Universit\"at Wuppertal, FB C -- Mathematik, Gau\ss{}stra\ss{}e 20, 42119 Wuppertal,\linebreak\phantom{x}\hspace{12.5pt}Germany, Phone:\hspace{1.2pt}\hspace{1.2pt}+49\hspace{1.2pt}(0)\hspace{1.2pt}202\hspace{1.2pt}/\hspace{1.2pt}439\hspace{1.2pt}-\hspace{1.2pt}2527, Fax:\hspace{1.2pt}\hspace{1.2pt}+49\hspace{1.2pt}(0)\hspace{1.2pt}202\hspace{1.2pt}/\hspace{1.2pt}439\hspace{1.2pt}-\hspace{1.2pt}3724, E-Mail: bjacob@uni-wuppertal.de.\vspace{1.6pt}}

\hspace{-1000pt}\footnote{$^{\text{b}}$\,Bergische Universit\"at Wuppertal, FB C -- Mathematik, Gau\ss{}stra\ss{}e 20, 42119 Wuppertal, Germany, Phone:\hspace{1.2pt}\hspace{1.2pt}+49\hspace{1.2pt}(0)\linebreak\phantom{x}\hspace{13.2pt}202\hspace{1.2pt}/\hspace{1.2pt}439\hspace{1.2pt}-\hspace{1.2pt}2531, E-Mail: wegner@math.uni-wuppertal.de.}

\begin{abstract}
We study the asymptotics of strongly continuous operator semigroups defined on locally convex spaces in order to develop a stability theory for solutions of evolution equations beyond Banach spaces. In the classical case, there is only little choice for a semigroup's speed in approaching zero uniformly. Indeed, if a strongly continuous semigroup on a Banach space converges to zero uniformly at any speed then it converges already uniformly at exponential speed. Semigroups with this property are said to be exponentially stable. Leaving the Banach space setting, the situation changes entirely; for instance convergence to zero at a speed faster than any polynomial but not exponentially fast is possible. In this article we establish concepts of stability which refine the classical notions and allow to grasp the different kinds of asymptotic behavior. We give characterizations of the new properties, study their relations and consider generic examples like multiplication semigroups and shifts. In addition we apply 
our results to the transport and the heat equation on classical Fr\'{e}chet function spaces.
\end{abstract}

\maketitle

%%%%%%%%%%%%%%%%%%%%%%%%%%%%%%%%%%%%%%%%%%%%%
%                                           %
%                                           %
% Introduction                              %
%                                           %
%                                           %
%%%%%%%%%%%%%%%%%%%%%%%%%%%%%%%%%%%%%%%%%%%%%

%%%%%%%%%%%%%%%%%%%%%%%%%%%%%%%%%%%%%%%%%%%%%%%%%%%%%%%%%%%%%%%%%%%%%%%%%%%%%%%%%%%%%%%
%                                                                                     %
%  1 Introduction                                                                     %
%                                                                                     %
%%%%%%%%%%%%%%%%%%%%%%%%%%%%%%%%%%%%%%%%%%%%%%%%%%%%%%%%%%%%%%%%%%%%%%%%%%%%%%%%%%%%%%%
\section{Introduction}\label{Intro}

Many phenomena appearing in nature are concerned with the evolution of measurable or computable variables, which vary in time and for instance describe the state of a given physical system. The physical laws governing the evolution of the systems, typically can be modeled by evolution equations. In this article we study equations of the form
$$\ACP\;\;\;
\begin{cases}
\,\frac{d}{dt}u(t)=Au(t)\text{ for }t\geqslant0,\\
\,\hspace{8pt}u(0)=u_0
\end{cases}
$$
where $u\colon[0,\infty)\rightarrow X$ is a vector valued function and $A\colon D(A)\rightarrow X$, $D(A)\subseteq X$ is a linear operator.
\smallskip
\\There exist various approaches to solve the abstract Cauchy problem above. In the sequel we restrict on the method of operator semigroups which allows to deduce from properties of $A$ the existence of a family $\T$ of operators on $X$ such that the classical solutions of $\ACP$ are given by the trajectories $u=T(\cdot)u_0$ whenever $u_0$ belongs to $D(A)$. If $X$ is a Banach space, the famous Hille-Yosida theorem characterizes those operators $A$ which generate a so-called $C_0$-semigroup; a type of semigroup which appears in many important instances of $\ACP$. We refer to the monograph \cite{EngelNagelOne} of Engel, Nagel for more details. If $X$ is a locally convex space, the class of $C_0$-semigroups allows for generalizations in several different directions. Accordingly, there exists a variety of Hille-Yosida type generation theorems corresponding to different continuity resp.~boundedness properties of the semigroups. To mention a small sample, we refer to Yosida \cite{Yosida}, Miyadera \cite{Miyadera},
 K{\=o}mura \cite{Komura}, Vuvunikyan \cite{Vuvunikyan}, Choe \cite{Choe}, Albanese, K\"uhnemund \cite{AK}, Babalola \cite{Babalola} and K\"uhnemund \cite{Kuehnemund-Hille-Yosida}. More details and references on generation results can be found in the introduction of Doma\'{n}ski, Langenbruch \cite{DoLa}.
\smallskip
\\In this article we study the asymptotic behavior of solutions of $\ACP$ for large time $t>0$ if $X$ is a locally convex space. We assume that the solutions of $\ACP$ are given by a semigroup $\T$ on $X$, which will therefore be the starting point for our asymptotic analysis. Moreover, our focus is on the case where $\lim_{t\rightarrow\infty}T(t)=0$ holds in a sense which has to be precised. For $X$ a Banach space, the question above has been studied extensively. We refer to Engel, Nagel \cite[Chapter V]{EngelNagelOne} and van Neerven \cite{vanNeerven} for a detailed exposition of the Banach space theory. Here, we want to emphasize the following: If one considers the asymptotics of the trajectories of \textit{all} points of $X$, then there is only little choice for the semigroup's speed in approaching zero as the following theorem illustrates, see Engel, Nagel \cite[V.1.2]{EngelNagelOne}, Gor\-ba\-chuk, Gor\-ba\-chuk \cite[Proposition 3]{GorbachukGorbachuk}. 

\begin{thmA} For a $C_0$-semigroup $\T$ on a Banach space $(X,\|\cdot\|)$ the following are equivalent.
\begin{itemize}
\item[(i)] $\lim_{t\rightarrow\infty}\|T(t)\|_{L(X)}=0$.\smallskip

\item[(ii)] $\exists\:\omega>0\colon \lim_{t\rightarrow\infty}e^{\omega{}t}\|T(t)\|_{L(X)}=0$.\smallskip

\item[(iii)] $\exists\:\omega>0\;\forall\:x\in X\colon \lim_{t\rightarrow\infty}e^{\omega{}t}\|T(t)x\|=0$.\smallskip

\item[(iv)] $\forall\:x\in X\;\exists\:\omega>0\colon \lim_{t\rightarrow\infty}e^{\omega{}t}\|T(t)x\|=0$.\smallskip

\item[(v)]$\exists\:w\in C[0,\infty),\,\text{s.th.}\,\lim_{t\rightarrow\infty}w(t)=\infty,\,\forall\:x\in X\colon \lim_{t\rightarrow\infty}w(t)\|T(t)x\|=0$.
\end{itemize}
\end{thmA}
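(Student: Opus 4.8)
The plan is to prove the five conditions equivalent by a cycle of implications, isolating the two genuinely analytic inputs---submultiplicativity of the operator norm combined with local boundedness of the semigroup, and a Baire category argument to upgrade pointwise decay to uniform decay---while treating the remaining implications as routine.

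First I would establish the core equivalence (i)$\Leftrightarrow$(ii). The direction (ii)$\Rightarrow$(i) is immediate, since $e^{\omega t}\geqslant1$ for large $t$ forces $\|T(t)\|_{L(X)}\to0$. For (i)$\Rightarrow$(ii) I would exploit that $t\mapsto\|T(t)\|_{L(X)}$ is submultiplicative and, because $\T$ is a $C_0$-semigroup, locally bounded (the latter via uniform boundedness applied to the orbits $t\mapsto T(t)x$, which are continuous hence bounded on compact intervals). From (i) one may pick $t_0>0$ with $q:=\|T(t_0)\|_{L(X)}<1$; writing $t=nt_0+r$ with $0\leqslant r<t_0$ and using $\|T(t)\|_{L(X)}\leqslant\|T(r)\|_{L(X)}q^n$ yields a bound $\|T(t)\|_{L(X)}\leqslant Me^{-\lambda t}$ for suitable $M\geqslant1$, $\lambda>0$. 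Any choice $\omega\in(0,\lambda)$ then gives (ii). Equivalently, this is the assertion that the growth bound $\omega_0=\inf_{t>0}t^{-1}\log\|T(t)\|_{L(X)}$ is strictly negative under (i).

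The implications (ii)$\Rightarrow$(iii)$\Rightarrow$(iv) and (ii)$\Rightarrow$(v) are then essentially free: the first two follow from $\|T(t)x\|\leqslant\|T(t)\|_{L(X)}\|x\|$ with one common $\omega$, and for (v) it suffices to take the weight $w(t)=e^{\omega t}$, which is continuous and tends to $\infty$.

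The main obstacle, and the step I would treat most carefully, is closing the cycle by showing that the pointwise hypotheses (iv) and (v) force the uniform conclusion (i); here I would invoke the Baire category theorem. For (v), put $G_n=\{x\in X:\sup_{t\geqslant0}w(t)\|T(t)x\|\leqslant n\}$. Each $G_n$ is closed, being an intersection over $t$ of preimages of closed sets under the continuous maps $x\mapsto T(t)x$, and $\bigcup_nG_n=X$ because every orbit $w(t)\|T(t)x\|$ converges, hence is bounded. By Baire some $G_N$ contains a ball $B(x_0,r)$, and the symmetrization $T(t)y=\tfrac12(T(t)(x_0+y)-T(t)(x_0-y))$ applied to all $\|y\|\leqslant r$ gives $w(t)\|T(t)\|_{L(X)}\leqslant N/r$, whence $\|T(t)\|_{L(X)}\leqslant(N/r)/w(t)\to0$ as $w(t)\to\infty$, which is (i). The argument for (iv)$\Rightarrow$(i) is identical with $G_n$ replaced by $\{x:\sup_{t\geqslant0}e^{t/n}\|T(t)x\|\leqslant n\}$, the covering property $\bigcup_nG_n=X$ now coming from the pointwise exponents $\omega_x$. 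I expect the only delicate points to be verifying that the $G_n$ are closed and that they genuinely exhaust $X$; the semigroup law itself enters only in (i)$\Rightarrow$(ii).
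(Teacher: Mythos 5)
Your proof is correct; note, though, that the paper itself never proves Theorem A --- it cites Engel--Nagel and Gorbachuk--Gorbachuk --- so the meaningful comparison is with the general machinery of Section \ref{Proofs}, which extends exactly these arguments beyond Banach spaces. Measured against that machinery, yours is the classical self-contained Banach route, and the two differ instructively. Your Baire step (closed sets $G_n$, a ball $B(x_0,r)\subseteq G_N$, and the symmetrization $T(t)y=\tfrac12\bigl(T(t)(x_0+y)-T(t)(x_0-y)\bigr)$) is the concrete Banach-space form of what the paper does abstractly: the covering-and-category part corresponds to Lemma \ref{PREP-2} (condensation of singularities) together with Proposition \ref{PREP-3}, while the passage from boundedness on a ball to a bound on the operator norm is replaced in the paper by equicontinuity via Bourbaki's theorem for barrelled spaces (Proposition \ref{PREP-1}), since in a non-normable space no ball is bounded and your symmetrization trick has no analogue. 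Your other key input, (i)$\Rightarrow$(ii) via submultiplicativity of $\|T(t)\|_{L(X)}$ (negativity of the growth bound), is genuinely Banach-specific and deliberately has no counterpart in the paper: the failure of precisely this implication in locally convex spaces (see Examples \ref{MULT-5} and \ref{MULT-8}) is the paper's point of departure. What each approach buys: yours is short and elementary; the paper's yields Theorems \ref{MAIN-1} and \ref{MAIN-2} for barrelled, Mackey complete and Baire spaces, of which the equivalence of Theorem A's (ii)--(iv) is the Banach specialization, whereas the unweighted condition (i) and the arbitrary-weight condition (v) are exactly what is lost beyond Banach spaces. Two small points of care in your write-up, neither a gap: the division $\|T(t)\|_{L(X)}\leqslant(N/r)/w(t)$ is only legitimate (and only needed) for $t$ large enough that $w(t)>0$, and when $w(t)<0$ the bound $w(t)\|T(t)\|_{L(X)}\leqslant N/r$ holds trivially, so the argument closes; similarly, in (iv)$\Rightarrow$(i) the exhaustion of $X$ needs both $1/n\leqslant\omega_x$ and $n$ at least the finite supremum $\sup_{t\geqslant0}e^{t/n}\|T(t)x\|$, which your phrasing implicitly handles by taking $n$ large.
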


Consequently, a $C_0$-semigroup which converges to zero pointwise arbitrarily slow but with uniform speed converges already uniformly to zero with exponential speed provided that $X$ is Banach space. The so-called exponentially stable semigroups can further be characterized by integrability conditions on their trajectories, cf.~Datko \cite[p.~615]{Datko}, Pazy \cite[p.~119]{Pazy}, Zabczyk \cite[Theorem 5.1]{Zabczyk}, Littman \cite[Theorem 2]{Littman}, Gorbachuk, Gorbachuk \cite[Theorem 1]{GorbachukGorbachuk}.

\begin{thmB} For a $C_0$-semigroup $\T$ on a Banach space $(X,\|\cdot\|)$ the following are equivalent.
\begin{itemize}
\item[(i)] $\T$ satisfies the conditions of Theorem A.\smallskip

\item[(ii)] $\forall\text{ or, equivalently, }\exists\:\beta\geqslant1\;\forall\:x\in X\colon\int_0^\infty \|T(t)x\|^{\beta}dt<\infty$.\smallskip

\item[(iii)] $\forall\:x\in X\;\exists\:\beta\geqslant1\colon \int_{0}^{\infty}\|T(t)x\|^{\beta}dt<\infty$.\smallskip

\item[(iv)] $\exists\:w\in C[0,\infty)\,\text{strictly increasing s.th.}\,w(0)=0,\,\forall\:x\in X\colon \int_0^{\infty}w(\|T(t)x\|)dt<\infty$.
\end{itemize}
\end{thmB}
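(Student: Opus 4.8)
The plan is to prove the four conditions equivalent through the cycle $(i)\Rightarrow(ii)\Rightarrow(iii)\Rightarrow(i)$ together with $(i)\Rightarrow(iv)\Rightarrow(i)$, invoking Theorem A at the end of each of the two hard directions to upgrade uniform convergence to zero into full exponential stability. The easy implications come first. For $(i)\Rightarrow(ii)$ I would use that (i) supplies $M,\omega>0$ with $\|T(t)\|_{L(X)}\le Me^{-\omega t}$, whence $\int_0^\infty\|T(t)x\|^\beta\,dt\le M^\beta\|x\|^\beta\int_0^\infty e^{-\beta\omega t}\,dt<\infty$ for \emph{every} $\beta\geqslant1$; this is the ``for all $\beta$'' form, which trivially implies the ``exists $\beta$'' form, and that in turn trivially implies $(iii)$ (the same $\beta$ serves every $x$). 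Likewise $(i)\Rightarrow(iv)$ is immediate with $w(r)=r$. The two substantial directions are $(iii)\Rightarrow(i)$ and $(iv)\Rightarrow(i)$, and both turn a merely pointwise hypothesis into a uniform estimate by a Baire category argument.

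For $(iii)\Rightarrow(i)$ the first task is to reduce to a single fixed exponent. I would record an orbit-boundedness lemma: if $\int_0^\infty\|T(t)x\|^\beta\,dt<\infty$, then with $f(t)=\|T(t)x\|$ and $M_1=\sup_{0\leqslant\tau\leqslant1}\|T(\tau)\|_{L(X)}$ (finite by local boundedness of a $C_0$-semigroup), the factorisation $T(t)=T(t-s)T(s)$ gives $f(t)\le M_1 f(s)$ for $s\in[t-1,t]$, and averaging the $\beta$-th power over $s$ yields $f(t)^\beta\le M_1^\beta\int_{t-1}^t f(s)^\beta\,ds$, so $\sup_t\|T(t)x\|<\infty$. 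Hence every exponent exceeding $\beta$ is integrable too, so for each $x$ some \emph{integer} $m$ works, and the sets $F_{m,n}=\{x:\int_0^\infty\|T(t)x\|^m\,dt\le n\}$, closed by Fatou and continuity of $T(t)$, cover $X$. Baire yields a ball $\{x_0+y:\|y\|\le r\}\subseteq F_{M,N}$; using $\|T(t)y\|\le\max(\|T(t)(x_0+y)\|,\|T(t)(x_0-y)\|)$, the bound $\max(a,b)^M\le a^M+b^M$, and $y=rx$ upgrades this to the uniform estimate $\int_0^\infty\|T(t)x\|^M\,dt\le (C\|x\|)^M$. From here the same averaging trick gives $\|T(t)\|_{L(X)}\le M_1C$ for $t\ge1$, hence $\sup_t\|T(t)\|_{L(X)}=:M'<\infty$, and then $t\,\|T(t)x\|^M=\int_0^t\|T(t-s)T(s)x\|^M\,ds\le (M')^M(C\|x\|)^M$ forces $\|T(t)\|_{L(X)}\le M'C\,t^{-1/M}\to0$, which is condition (i) of Theorem A.

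For $(iv)\Rightarrow(i)$ I would run the nonlinear analogue. The sets $E_n=\{x:\int_0^\infty w(\|T(t)x\|)\,dt\le n\}$ are closed (Fatou, together with continuity of $w$ and $T(t)$) and cover $X$; Baire, the symmetrisation $w(\|T(t)y\|)\le w(\|T(t)(x_0+y)\|)+w(\|T(t)(x_0-y)\|)$ (monotonicity of $w$), and scaling produce $c>0$ and $C<\infty$ with $\int_0^\infty w(c\|T(t)x\|)\,dt\le C$ for every unit vector $x$. The crucial step is then uniform boundedness of the semigroup. Here I would argue by measure: Chebyshev gives $|\{t:\|T(t)x\|>1\}|\le C/w(c)=:L-1$ uniformly in unit $x$, so every window $[t-L,t]$ contains an instant $s$ with $\|T(s)x\|\le1$; factoring $T(t)=T(t-s)T(s)$ with $t-s\le L$ and using $M_L=\sup_{0\leqslant\tau\leqslant L}\|T(\tau)\|_{L(X)}$ gives $\|T(t)x\|\le M_L$ for all $t$ and unit $x$, i.e. $\sup_t\|T(t)\|_{L(X)}=:M'<\infty$. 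Once boundedness holds the rest is soft: for $s\le t$ one has $\|T(s)x\|\ge\|T(t)x\|/M'$, so $C\ge\int_0^t w(c\|T(s)x\|)\,ds\ge t\,w(c\|T(t)x\|/M')$, and inverting the continuous strictly increasing $w$ yields $\|T(t)\|_{L(X)}\le (M'/c)\,w^{-1}(C/t)\to0$, again condition (i) of Theorem A.

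I expect the main obstacle to be precisely this uniform-boundedness step in $(iv)\Rightarrow(i)$. The standard boundedness proofs—whether through the lower bound on $\int_0^t e^{-p\omega(t-s)}\,ds$ or through local averaging—rely on pulling the semigroup's growth constant out of the nonlinearity, which is legitimate for $r\mapsto r^p$ but fails for a general $w$, since the constant ends up \emph{inside} $w$ and can no longer be absorbed by the hypothesis $\int_0^\infty w(c\|T(t)x\|)\,dt\le C$. The measure-theoretic window argument circumvents this, relying only on the uniformity in $x$ of the Chebyshev bound (which Baire supplies) and on local boundedness; it is robust enough to accommodate ``flat'' weights such as $w(r)=e^{-1/r}$, for which the pointwise hypothesis on its own would give no better than logarithmic decay and thus could not by itself force exponential stability.
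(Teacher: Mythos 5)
Your proof is correct, but note that the paper itself never proves Theorem B: it is stated as classical background, with the proofs delegated to the cited works of Datko, Pazy, Zabczyk, Littman and Gorbachuk--Gorbachuk. The closest internal counterpart is the proof of Theorem \ref{MAIN-2}, step (v$_{\exists}$)$\Rightarrow$(i), together with Remark \ref{REM-5}, which generalize the Datko part (fixed exponent $\beta$, uniform in $x$) to locally convex spaces; measured against that, your route is genuinely different. You obtain the uniform estimate $\int_0^\infty\|T(t)x\|^M\,dt\leqslant(C\|x\|)^M$ by Baire category applied directly to the closed sets $F_{m,n}$, whereas the paper gets its analogue \eqref{EQ1} by applying the uniform boundedness principle to the operators $\mathcal{T}_nx=\chi_{[0,n]}(\cdot)T(\cdot)x$ into a purpose-built vector-valued space $L^{\beta}(I,X)$. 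You then pass to decay via the one-line bound $t\,\|T(t)x\|^M\leqslant(M')^M(C\|x\|)^M$ and invoke Theorem A as a black box to upgrade $\|T(t)\|_{L(X)}\rightarrow0$ to exponential stability; the paper cannot afford this shortcut, since Theorem A is precisely what fails beyond Banach spaces, so it instead runs an induction yielding $t^nq(T(t)x)^{\beta}\leqslant n!\,N^{n+1}p(x)^{\beta}$ for all $n$ and, in the Banach case, sums the resulting series into an exponential (Remark \ref{REM-5}). What each approach buys: yours is shorter and more elementary in the Banach setting and, importantly, covers the two conditions the paper's machinery does not address at all, namely the $x$-dependent exponent in (iii) (your double-indexed covering $F_{m,n}$ plays the role of the paper's condensation-of-singularities Lemma \ref{PREP-2}) and the Orlicz-type condition (iv); the paper's heavier iteration is what survives the passage to locally convex spaces. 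Your diagnosis of the difficulty in (iv) is also on target: since $w$ is not homogeneous, the scaling constant lands inside $w$ and cannot be absorbed, and your Chebyshev/window argument (in every window of length $L$ find an instant where the orbit has norm at most one, then factor through it) is the standard correct way around this, in the spirit of Zabczyk and Gorbachuk--Gorbachuk.
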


If $X$ is a locally convex space, then the above equivalences fail in general. In particular, it may happen that a $C_0$-semigroup approaches zero uniformly but without exponential speed. In Section \ref{NotationAndMainResults} we therefore introduce several concepts of stability and establish their hierarchy under mild assumptions on $X$. For barrelled, Baire and (Mackey) complete spaces, respectively, we prove characterizations of the properties using conditions similar to those explained in Theorem's A and B. In Section \ref{Examples} we provide examples which illustrate that the classes of stable semigroups considered in this article are distinct in general and that our characterizations in Section \ref{NotationAndMainResults} are sharp in the sense that the assumptions on the underlying space cannot be dropped. In Section \ref{Proofs} we give the proofs of our results. Finally, in Section \ref{Applications}, we return to the abstract Cauchy problem $\ACP$ above and study the transport and the heat 
equation on the Schwartz space of rapidly decreasing functions resp.~on a Fr\'{e}chet space introduced by Miyadera \cite{Miyadera}.
\smallskip
\\For the theory of semigroups on Banach spaces we refer to Engel, Nagel \cite{EngelNagelOne}, for the locally convex case to Yosida \cite{Yosida} and Choe \cite{Choe}. For the general theory of locally convex spaces we refer to Meise, Vogt \cite{MeiseVogtEnglisch}, Jarchow \cite{Jarchow} and Floret, Wloka \cite{FloretWloka}.

%%%%%%%%%%%%%%%%%%%%%%%%%%%%%%%%%%%%%%%%%%%%%%%%%%%%%%%%%%%%%%%%%%%%%%%%%%%%%%%%%%%%%%%
%                                                                                     %
%  2 Main Results                                                                     %
%                                                                                     %
%%%%%%%%%%%%%%%%%%%%%%%%%%%%%%%%%%%%%%%%%%%%%%%%%%%%%%%%%%%%%%%%%%%%%%%%%%%%%%%%%%%%%%%
\section{Notation and Main Results}\label{NotationAndMainResults}

Let $X$ be a locally convex space. Unless specified otherwise we tacitly assume that $X$ is Hausdorff. By $\Gamma_X$ we denote a system of continuous seminorms determining the topology of $X$ and by $L(X)$ the space of all linear and continuous operators from $X$ into itself. The strong operator topology $\tau_s$ on $L(X)$ is determined by the family of seminorms
$$
q_x(S):=q(Sx), \; S\in L(X),
$$
for each $x\in X$ and $q\in\Gamma_X$; we write $L_s(X)$ in this case. We denote by $\mathcal{B}_X$ the collection of all bounded subsets of $X$. The topology $\tau_b$ of bounded convergence is determined by the seminorms
$$
q_B(S):=\sup_{x\in B}q(Sx), \; S\in L(X),
$$
for $B\in\mathcal{B}_X$ and $q\in\Gamma_X$; in this case we write $L_b(X)$. If $X$ is a Banach space, then $\tau_b$ is the operator norm topology on $L(X)$. The space $X$ is barrelled, if every absolutely convex, absorbing and closed set (v.i.z., every barrel) is a neighborhood of zero in $X$.

\begin{definition}\label{DFN-SEMIGROUP} Let $X$ be a locally convex space. We say that a one parameter family of operators $\T\subseteq{}L(X)$ is a \textit{strongly continuous semigroup} or for short a \textit{$C_0$-semigroup} on $X$, if
\begin{itemize}
\item[(i)] $T(0)=\id_X$,\smallskip
\item[(ii)] $\forall\:s,\,t\geqslant0\colon T(s+t)=T(s)T(t)$,\smallskip
\item[(iii)] $\forall\:t_0\geqslant0,\,x\in X\colon{\lim_{t\rightarrow t_0}}T(t)x=T(t_0)x$\smallskip
\end{itemize}
holds. We say that $\T$ is \textit{exponentially bounded}, if 
\begin{itemize}
\item[(iv)] $\forall\:q\in\Gamma_X\;\exists\:p\in\Gamma_X,\,M\geqslant1,\,\omega\in\mathbb{R}\;\forall\:t\geqslant0,\,x\in X\colon q(T(t)x)\leqslant{}Me^{\omega{}t}p(x)$
\end{itemize}
is valid. Finally we say that $\T$ is \textit{bounded}, if
\begin{itemize}
\item[(v)] $\forall\:B\in\mathcal{B}_X,\,q\in\Gamma_X\:\exists\:C\geqslant0\;\forall\:t\geqslant0\colon q_B(T(t))\leqslant{}C$.
\end{itemize}
\end{definition}

If $X$ is a Banach space, then every $C_0$-semigroup is exponentially bounded. Boundedness in the above sense means exactly that $\T\subseteq L_b(X)$ is bounded. If $X$ is barrelled then the latter is equivalent to the condition in (iv) with $\omega=0$, v.i.z., to equicontinuity of $\T$.

\begin{definition}\label{DFN-STABILITY} Let $X$ be a locally convex space and $\T$ a $C_0$-semigroup on $X$. We say that $\T$ is
\begin{itemize}
\item[(i)] \textit{uniformly exponentially stable}, if
$$
\exists\:\omega>0\;\forall\:B\in\mathcal{B}_X,\,q\in\Gamma_X\colon\lim_{t\rightarrow\infty}q_B(e^{\omega{}t}T(t))=0,
$$
\item[(ii)] \textit{pseudo uniformly exponentially stable}, if
$$
\forall\:q\in\Gamma_X\;\exists\:\omega>0\;\forall\:B\in\mathcal{B}_X\colon\lim_{t\rightarrow\infty}q_B(e^{\omega t}T(t))=0,
$$
\item[(iii)] \textit{strongly exponentially stable}, if
$$
\forall\:x\in X\;\exists\:\omega>0\colon\lim_{t\rightarrow\infty}e^{\omega t}T(t)x=0,
$$
\item[(iv)] \textit{pseudo strongly exponentially stable}, if
$$
\forall\:q\in\Gamma_X,\,x\in X\;\exists\:\omega>0\colon\lim_{t\rightarrow\infty}q(e^{\omega t}T(t)x)=0,
$$
\item[(v)] \textit{super polynomially stable}, if
$$
\forall\:\alpha>1,\,B\in\mathcal{B}_X,\,q\in\Gamma_X\colon\lim_{t\rightarrow\infty}q_B(t^{\alpha}T(t))=0,
$$
\item[(vi)] \textit{uniformly stable}, if 
$$
\forall\:B\in\mathcal{B}_X,\,q\in\Gamma_X\colon\lim_{t\rightarrow\infty}q_B(T(t))=0,
$$
\item[(vii)] \textit{strongly stable}, if 
$$
\forall\:x\in X\colon\lim_{t\rightarrow\infty}T(t)x=0.
$$
\end{itemize}
\end{definition}

For $X$ a Banach space the properties (i)--(vi) above collapse to the classical notion of exponential stability, whereas strong stability is a strictly weaker condition.
\smallskip
\\A sequence $(x_n)_{n\in\mathbb{N}}$ in $X$ is Mackey Cauchy, if there exist $B\in\mathcal{B}_X$ and scalars $b_{n,k}$ tending to zero for $n$ and $k$ both tending to infinity such that $x_n-x_k\in b_{n,k}B$ holds for all $n$ and $k\in\mathbb{N}$. The space $X$ is Mackey complete if every Mackey Cauchy sequence in $X$ is convergent. Note that every sequentially complete space is Mackey complete.

\begin{theorem}\label{MAIN-0}For a Mackey complete, barrelled locally convex space $X$ and an exponentially bounded $C_0$-semigroup we have the following hierarchy
\begin{center}
\includegraphics[width=12cm]{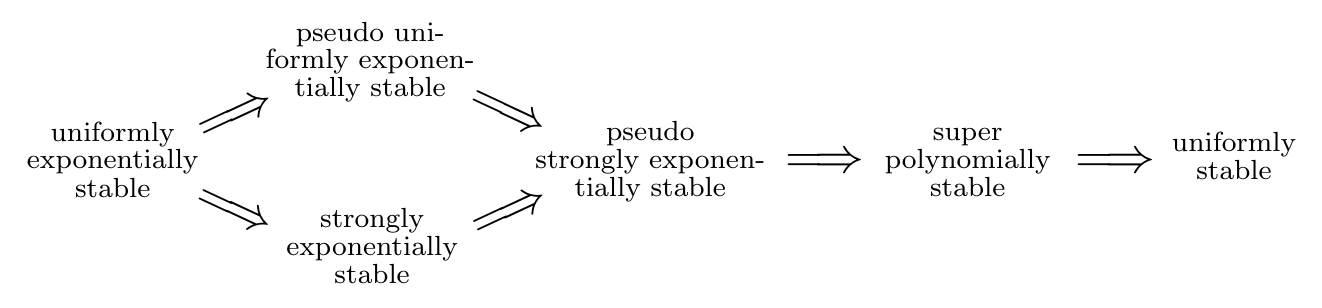}
\end{center}
of stability properties. None of the implications is an equivalence and between the two perpendicular conditions no implication is true in general.
\end{theorem}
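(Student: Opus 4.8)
The scheme combines two kinds of assertions: the implications represented by its arrows, and the sharpness claims, namely that no arrow can be reversed and that the two conditions drawn perpendicular to one another are logically independent. The plan is to settle every implication directly here, by reshuffling the quantifiers of Definition \ref{DFN-STABILITY} and using the single elementary fact that $\lim_{t\to\infty}t^{\alpha}e^{-\omega t}=0$ for all $\alpha>0$, $\omega>0$; the sharpness part I would then read off from the separating examples collected in Section \ref{Examples}.

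For the implications I would work along two chains. On the uniform side the chain is (i)$\Rightarrow$(ii)$\Rightarrow$(v)$\Rightarrow$(vi)$\Rightarrow$(vii). Here (i)$\Rightarrow$(ii) is immediate, since the rate $\omega$ furnished by (i) does not depend on $q$ and hence witnesses (ii) for every seminorm. For (ii)$\Rightarrow$(v) I fix $\alpha>1$, $B$ and $q$, take the rate $\omega$ belonging to $q$, and factor
$$q_B(t^{\alpha}T(t))=\bigl(t^{\alpha}e^{-\omega t}\bigr)\,q_B(e^{\omega t}T(t));$$
the first factor tends to $0$ and the second is bounded, so the product tends to $0$. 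The step (v)$\Rightarrow$(vi) is the same estimate read backwards, $q_B(T(t))=t^{-\alpha}q_B(t^{\alpha}T(t))$ with any fixed $\alpha>1$, and (vi)$\Rightarrow$(vii) follows by specialising the bounded set to a singleton $B=\{x\}$.

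On the strong side the chain is (i)$\Rightarrow$(iii)$\Rightarrow$(iv)$\Rightarrow$(vii), linked to the uniform chain by (ii)$\Rightarrow$(iv). For (i)$\Rightarrow$(iii) the uniform rate again serves every point; (iii)$\Rightarrow$(iv) and (ii)$\Rightarrow$(iv) are pure quantifier weakenings (restrict to a single seminorm, resp.\ to $B=\{x\}$); and (iv)$\Rightarrow$(vii) follows from $q(T(t)x)=e^{-\omega t}q(e^{\omega t}T(t)x)\to0$. None of these steps uses barrelledness or Mackey completeness, nor even exponential boundedness; these standing hypotheses serve the sharpness half of the statement, guaranteeing that the separating examples of Section \ref{Examples} themselves belong to the admissible class, so that the strictness of the arrows is genuine rather than an artefact of a pathological space.

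The remaining, and genuinely harder, point is the independence of the two perpendicular conditions, super polynomial stability (v) and pseudo strong exponential stability (iv). Neither implication can hold in general: a semigroup decaying uniformly like $e^{-\sqrt{t}}$ is faster than every polynomial, hence satisfies (v), yet admits no positive exponential rate on a single nonzero orbit and so violates (iv); conversely a semigroup whose orbits decay exponentially pointwise but with rates degenerating to $0$ along a bounded set satisfies (iv) while failing the uniform estimate (v). I expect the main obstacle to lie precisely in realising these two scenarios by honest $C_0$-semigroups on Mackey complete, barrelled spaces, for instance as multiplication or shift semigroups on Fr\'echet function spaces, which is exactly the construction carried out in Section \ref{Examples}; once such examples are available, the strictness of every individual arrow follows in the same spirit.
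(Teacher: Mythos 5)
Your individual quantifier manipulations are all correct, but the structure you impose on the scheme is not the paper's, and the discrepancy conceals a genuine mathematical error. In the paper's scheme the two perpendicular conditions are pseudo \emph{uniform} exponential stability (Definition \ref{DFN-STABILITY}(ii)) and \emph{strong} exponential stability (Definition \ref{DFN-STABILITY}(iii)); the separating examples are Example \ref{MULT-2} (strongly but not pseudo uniformly exponentially stable, on a complete barrelled LB-space) and Example \ref{MULT-4} (pseudo uniformly but not strongly exponentially stable, on $\mathbb{C}^{\mathbb{N}}$). They are not, as you claim, super polynomial stability (v) and pseudo strong exponential stability (iv): the arrow (iv)\,$\Rightarrow$\,(v) belongs to the scheme, it is the only implication that is not a quantifier weakening, and proving it is essentially the entire content of the paper's proof. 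There, Mackey completeness is used via Theorem \ref{MAIN-3}(2) to upgrade the pointwise statement (iv) to $\forall\,q\in\Gamma_X,\,B\in\mathcal{B}_X\;\exists\,\omega>0\colon\lim_{t\rightarrow\infty}q_B(e^{\omega t}T(t))=0$; then the factorization $q_B(t^{2}T(t))=t^{2}e^{-\omega t}q_B(e^{\omega t}T(t))\rightarrow0$ yields Theorem \ref{MAIN-2}(ii); and barrelledness plus exponential boundedness (Theorem \ref{MAIN-2}) convert the single exponent $\alpha=2$ into all $\alpha>1$, i.e., into (v). Your factorization argument for (ii)\,$\Rightarrow$\,(v) is the same computation, but it starts from an $\omega$ that is uniform over bounded sets for a fixed $q$ --- available from (ii) directly, but from (iv) only through Theorem \ref{MAIN-3}, i.e., only using Mackey completeness.

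Consequently your proposed second counterexample --- a semigroup whose orbits decay exponentially pointwise but with rates degenerating to $0$ along a bounded set, satisfying (iv) while failing (v) --- cannot exist in the admissible class; its nonexistence is precisely part of what the theorem asserts. Such behavior does occur, but only outside the hypotheses, e.g.\ in Example \ref{MULT-1} on $\varphi$ with the supremum norm, which is not barrelled. For the same reason your remark that barrelledness, Mackey completeness and exponential boundedness serve only the sharpness half of the statement is wrong: they are indispensable for the implication (iv)\,$\Rightarrow$\,(v). The remaining arrows you establish ((i)$\Rightarrow$(ii), (ii)$\Rightarrow$(v), (v)$\Rightarrow$(vi), (vi)$\Rightarrow$(vii), (i)$\Rightarrow$(iii), (iii)$\Rightarrow$(iv), (ii)$\Rightarrow$(iv), (iv)$\Rightarrow$(vii)) are correct and constitute the trivial part of the paper's argument, and deferring the strictness claims to the examples of Section \ref{Examples} is also what the paper does; but as it stands your proof omits the one implication that actually requires proof and asserts its negation instead.
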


The space $X$ is a Baire space if $X$ cannot be written as the countable union of nowhere dense sets.

\begin{theorem}\label{MAIN-1} For a $C_0$-semigroup $\T$ consider the following statements.
\begin{itemize}
\item[(i)] $\T$ is uniformly exponentially stable.\smallskip
\item[(ii)] $\exists\:\omega>0\;\forall\:q\in\Gamma_X\;\exists\:p\in\Gamma_X\colon\lim_{t\rightarrow\infty}\sup_{p(x)\leqslant1}q(e^{\omega{}t}T(t)x)=0$.\smallskip
\item[(iii)] $\exists\:\omega>0\;\forall\:x\in X\colon \lim_{t\rightarrow\infty}e^{\omega t}T(t)x=0$.\smallskip
\item[(iv)] $\T$ is strongly exponentially stable.\smallskip
\item[(v)] $\forall\:B\in\mathcal{B}_X\;\exists\:\omega>0\;\forall\:q\in\Gamma_X\colon \lim_{t\rightarrow\infty}q_B(e^{\omega t}T(t))=0$.\smallskip
\end{itemize}
If $X$ is barrelled, then (i)--(iii) are equivalent. If $X$ is Mackey complete, then (iv) and (v) are equivalent. If $X$ is Baire, then all statements are equivalent.
\end{theorem}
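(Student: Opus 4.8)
The plan is to first dispatch the implications that hold for an arbitrary $C_0$-semigroup, and then to isolate the two genuine ``uniformisation'' steps: one handled by barrelledness, the other by a Baire category argument run on $X$ itself in the Baire case and on an auxiliary Banach space in the Mackey complete case. Since every singleton is bounded, testing (i) and (v) on $B=\{x\}$ yields $(i)\Rightarrow(iii)$, $(i)\Rightarrow(v)$ and $(v)\Rightarrow(iv)$, while $(iii)\Rightarrow(iv)$ holds because enlarging the admissible $\omega$ only weakens the statement. For $(ii)\Rightarrow(i)$ I would fix $B\in\mathcal{B}_X$ and $q\in\Gamma_X$, choose $p$ as in (ii), and use that $p$ is bounded on $B$, say $p\le C$ on $B$; then $B\subseteq\{p\le C\}=C\{p\le1\}$, so by homogeneity $q_B(e^{\omega t}T(t))\le C\sup_{p(x)\le1}q(e^{\omega t}T(t)x)\to0$. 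Thus $(ii)\Rightarrow(i)\Rightarrow(iii)\Rightarrow(iv)$ and $(i)\Rightarrow(v)\Rightarrow(iv)$ hold unconditionally, and it remains to prove $(iii)\Rightarrow(ii)$ for barrelled $X$, $(iv)\Rightarrow(v)$ for Mackey complete $X$, and $(iv)\Rightarrow(iii)$ for Baire $X$; the asserted equivalences then close up.

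For $(iii)\Rightarrow(ii)$ assume $X$ barrelled and let $\omega>0$ be the rate from (iii). For each fixed $x$ the map $t\mapsto e^{\omega t}T(t)x$ is continuous on $[0,\infty)$ and tends to $0$, hence extends continuously to the one-point compactification and has relatively compact, in particular bounded, range. Thus $\{e^{\omega t}T(t):t\ge0\}$ is pointwise bounded, and the uniform boundedness principle for barrelled spaces makes it equicontinuous: for every $q\in\Gamma_X$ there is $p\in\Gamma_X$ with $q(e^{\omega t}T(t)x)\le p(x)$ for all $t\ge0$ and $x\in X$. Passing to any rate $\omega'\in(0,\omega)$ and writing $e^{\omega't}T(t)=e^{-(\omega-\omega')t}e^{\omega t}T(t)$ gives $\sup_{p(x)\le1}q(e^{\omega't}T(t)x)\le e^{-(\omega-\omega')t}\to0$, which is exactly (ii) with $\omega'$. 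Together with the trivial implications this settles $(i)\Leftrightarrow(ii)\Leftrightarrow(iii)$ for barrelled $X$.

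The remaining two implications rest on the same device, which I would isolate as a lemma: on a Baire locally convex space $Z$ (with $Z=X$ or $Z=X_D$ below) that is continuously included in $X$, if $Z=\bigcup_{m\in\mathbb{N}}Z_m$ with $Z_m=\{x\in Z: e^{t/m}T(t)x\to0\text{ in }X\}$, then there is a single $m_0$ with $e^{t/(2m_0)}T(t)x\to0$ in $X$ for \emph{all} $x\in Z$. The $Z_m$ are increasing linear subspaces, so as $Z$ is Baire some $Z_{m_0}$ is nonmeager; this is where one fixes a single rate $1/m_0$ valid across all seminorms at once, exploiting the coupling built into the definition of $Z_m$. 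Now fix $q\in\Gamma_X$: every point of $Z_{m_0}$ has a bounded $q$-orbit at rate $1/m_0$, so $Z_{m_0}\subseteq\bigcup_k C^q_k$ with $C^q_k=\{x\in Z:\sup_{t\ge0}q(e^{t/m_0}T(t)x)\le k\}$ closed, because a supremum of continuous functions is lower semicontinuous. Since $Z_{m_0}$ is nonmeager, some closed $C^q_{k_0}$ is nonmeager and hence has interior; a translation-and-gauge argument then produces a continuous seminorm $p_q$ on $Z$ with $\sup_{t\ge0}q(e^{t/m_0}T(t)x)\le p_q(x)$ for every $x\in Z$. Multiplying by $e^{-t/(2m_0)}$ gives $q(e^{t/(2m_0)}T(t)x)\le e^{-t/(2m_0)}p_q(x)\to0$ for every $x$ and every $q$, as claimed.

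Applying the lemma with $Z=X$ proves $(iv)\Rightarrow(iii)$, since (iv) is precisely the statement $X=\bigcup_m X_m$; as Baire spaces are barrelled, (i)--(iii) are already equivalent, so this closes all five conditions. For Mackey complete $X$ and a bounded $B$, I would embed $B$ in a Banach disk $D$, so that $X_D=\lin D$ with the gauge of $D$ is a Banach -- hence Baire -- space continuously included in $X$; Mackey completeness is exactly what makes $X_D$ complete. Applying the lemma with $Z=X_D$ yields $m_0$ and seminorms $p_q$ with $\sup_t q(e^{t/m_0}T(t)x)\le p_q(x)$, and since each $p_q$ is dominated by a multiple of the $X_D$-norm we get $q_B(e^{t/(2m_0)}T(t))\le C_q\,e^{-t/(2m_0)}\to0$ for every $q$ at the \emph{single} rate $\omega=1/(2m_0)$, which is (v). The main obstacle is precisely the order of quantifiers: a naive Baire argument carried out one seminorm at a time returns a rate $m_q$ depending on $q$, and hence only the weaker pseudo-type stability from Definition \ref{DFN-STABILITY}; the crux is to secure one rate $m_0$ valid for all seminorms \emph{before} extracting the uniform bounds, which is what the subspace decomposition $Z=\bigcup_m Z_m$ achieves.
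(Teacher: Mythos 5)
Your proposal is correct, and its overall architecture coincides with the paper's: the unconditional implications, the barrelled case via pointwise boundedness of $\{e^{\omega t}T(t)\,;\,t\geqslant0\}$ plus the Banach--Steinhaus theorem and passage to a strictly smaller rate $\omega'<\omega$, and the reduction of the Mackey complete case to the Banach (hence Baire) space spanned by a Banach disk containing $B$ --- this is exactly how the paper handles its parts 1, 2 and 3. The one genuine difference is how the Baire category step is organized. The paper isolates a condensation-of-singularities principle (Lemma \ref{PREP-2}), uses it in Proposition \ref{PREP-3} to produce a single rate valid for all points, and then feeds the resulting strong convergence into Proposition \ref{PREP-1} (Banach--Steinhaus on the barrelled space $X_{B_0}$) to obtain uniform convergence on bounded sets. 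You instead decompose $Z=\bigcup_m Z_m$ into the increasing subspaces of points admitting rate $1/m$, pick a nonmeager $Z_{m_0}$, and for each $q$ extract the bound $\sup_{t\geqslant0}q(e^{t/m_0}T(t)x)\leqslant p_q(x)$ directly from a closed, absolutely convex, nonmeager set $C^q_{k_0}$ via its gauge --- thereby merging the paper's Propositions \ref{PREP-3} and \ref{PREP-1} into a single lemma; all the individual steps (closedness of $C^q_k$ by lower semicontinuity, interior point of an absolutely convex closed nonmeager set, the halving of the rate to convert boundedness into decay) are sound. Both routes enforce the same crucial quantifier discipline, namely fixing one rate before letting $q$ vary, which you correctly identify as the crux; yours is more self-contained and needs only strong continuity of the trajectories rather than local equicontinuity of an abstract family, while the paper's factorization through Propositions \ref{PREP-1} and \ref{PREP-3} with a general weight family $\mathcal{H}$ buys reusability: the same two propositions are invoked verbatim for the polynomial weights in Theorem \ref{MAIN-2}.
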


\begin{theorem}\label{MAIN-2} For a $C_0$-semigroup $\T$ consider the following statements.
\begin{itemize}
\item[(i)] $\T$ is super polynomially stable.\smallskip
\item[(ii)] $\exists\:\alpha>1\;\forall\:B\in\mathcal{B}_X,\,q\in\Gamma_X\colon \lim_{t\rightarrow\infty}q_B(t^{\alpha}T(t))=0$.\smallskip
\item[(iii)] $\forall\text{ or, equivalently, }\exists\:\alpha>1\;\forall\:q\in\Gamma_X\;\exists\:p\in\Gamma_X\colon\lim_{t\rightarrow\infty}\sup_{p(x)\leqslant1}q(t^{\alpha}T(t)x)=0$.\smallskip
\item[(iv)]  $\forall\text{ or, equivalently, }\exists\:\alpha>1\;\forall\:x\in X\colon \lim_{t\rightarrow\infty}t^{\alpha}T(t)x=0$.\smallskip
\item[(v)] $\forall\text{ or, equivalently, }\exists\:\beta\geqslant1\;\forall\:q\in\Gamma_X,\,x\in X\colon\int_0^\infty q(T(t)x)^{\beta}dt<\infty$.\smallskip
\item[(vi)] $\forall\:x\in X\;\exists\:\alpha>1\;\colon \lim_{t\rightarrow\infty}t^{\alpha}T(t)x=0$.\smallskip
\item[(vii)] $\forall\:B\in\mathcal{B}_X\;\exists\:\alpha>1\;\forall\:q\in\Gamma_X\colon \lim_{t\rightarrow\infty}q_B(t^{\alpha}T(t))=0$.\smallskip
\end{itemize}
Let $\T$ be exponentially bounded. If $X$ is barrelled, then (i)--(v) are equivalent. If $X$ is Mackey complete, then (vi) and (vii) are equivalent. If $X$ is Baire, then all statements are equivalent.
\end{theorem}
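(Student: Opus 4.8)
The whole theorem can be organised around a single \emph{doubling lemma} extracted from the semigroup law $T(t)=T(t/2)T(t/2)$, so I would first isolate that device and dispose of the implications needing no hypothesis on $X$. Among the latter: (i)$\Rightarrow$(ii) and (ii)$\Rightarrow$(vii) are mere quantifier weakenings; (iii)$\Rightarrow$(i) holds because any $B\in\mathcal{B}_X$ satisfies $B\subseteq\lambda\{x:p(x)\le1\}$ with $\lambda=\sup_{x\in B}p(x)<\infty$, whence $q_B(t^\alpha T(t))\le\lambda\sup_{p(x)\le1}q(t^\alpha T(t)x)$; (iii)$\Rightarrow$(iv) follows by scaling $x\mapsto x/p(x)$, while (iv)$\Rightarrow$(vi) and (vii)$\Rightarrow$(vi) are quantifier swaps with $B=\{x\}$; and (iv)$\Rightarrow$(v) holds since $t^\alpha q(T(t)x)\to0$ with $\alpha>1$ gives $q(T(t)x)\le C_{x,q}t^{-\alpha}$, integrable already for $\beta=1$. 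The doubling lemma I would state thus: if there is $\gamma>0$ such that for every $q\in\Gamma_X$ one has $q(T(t)x)\le C_q\,p_q(x)\,t^{-\gamma}$ for all $t\ge1$ and $x$, with $p_q$ a continuous seminorm, then the same holds with $2\gamma$ in place of $\gamma$; the proof is to write $q(T(t)x)\le C_q\,p_q(T(t/2)x)\,(t/2)^{-\gamma}$ and then $p_q(T(t/2)x)\le C_{p_q}p_{p_q}(x)(t/2)^{-\gamma}$, producing exponent $2\gamma$ with new continuous seminorm $p_{p_q}$. Iterating reaches every exponent, and since $p^{(k)}$ stays continuous this upgrades \emph{directly} to (iii): for $\alpha$ fixed and $2^k\gamma>\alpha$ one gets $\sup_{p^{(k)}(x)\le1}q(t^\alpha T(t)x)\le C^{(k)}t^{\alpha-2^k\gamma}\to0$. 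An identical lemma with ``$p$ continuous'' replaced by ``$B$ bounded'' feeds the bounded-set condition (vii).

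For the \emph{barrelled} regime I would close the cycle (iii)$\Rightarrow$(i)$\Rightarrow$(ii)$\Rightarrow$(iv)$\Rightarrow$(iii) together with (iv)$\Leftrightarrow$(v). From (iv) for one $\alpha_0>1$ the orbits $t\mapsto t^{\alpha_0}T(t)x$ are continuous and convergent, hence bounded, so by barrelledness the Banach--Steinhaus theorem makes $\{t^{\alpha_0}T(t):t\ge0\}$ equicontinuous; this is exactly the hypothesis of the doubling lemma with $\gamma=\alpha_0$, and the lemma yields (iii). For (v)$\Rightarrow$(iii) I would run a locally convex Datko argument: the seminorms $N_q(x):=\bigl(\int_0^\infty q(T(t)x)^\beta dt\bigr)^{1/\beta}$ are finite by hypothesis and, being lower semicontinuous, are continuous because $X$ is barrelled; using exponential boundedness to compare $q(T(t)x)$ with $p(T(s)x)$ over $s\in[0,t]$ one first obtains that $\{T(t)\}$ is equicontinuous, and then the estimate $t\,q(T(t)x)^\beta\le C_q^\beta\int_0^t p_q(T(s)x)^\beta ds\le C_q^\beta N_{p_q}(x)^\beta$ gives $q(T(t)x)\le C_q N_{p_q}(x)t^{-1/\beta}$, i.e.\ the doubling hypothesis with $\gamma=1/\beta$, whence (iii). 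The internal ``$\forall$ or equivalently $\exists$'' clauses come for free, since the doubling produces all exponents $\alpha$ and all $\beta$ simultaneously.

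For the \emph{Mackey complete} regime the task is (vi)$\Rightarrow$(vii), a genuine uniformisation, and I expect this to be the main obstacle. Fixing $B\in\mathcal{B}_X$ I would pass to a Banach disc: by Mackey completeness $B$ lies in a closed absolutely convex bounded set whose span $X_B$, normed by its Minkowski functional, is a Banach space and hence a Baire space. For each fixed $q$ the closed sets $F_{q,n}=\{x\in X_B:\sup_{t\ge0}t^{1+1/n}q(T(t)x)\le n\}$ cover $X_B$ by (vi), so Baire's theorem gives one with interior and thus a rate $\alpha_q>1$ with $t^{\alpha_q}q(T(t)x)\le C_q\|x\|_B$ on $X_B$; since $\alpha_q>1$ this degrades to the common rate $t\,q(T(t)x)\le C_q$ on $B$, so that $B_1:=\{tT(t)x:x\in B,\,t\ge1\}$ is bounded. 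Applying the same Baire step to $B_1$ and then the bounded-set doubling lemma across $t^2T(t)x=4\,(sT(s))(sT(s)x)$ with $s=t/2$ and $sT(s)x\in B_1$ yields $q_B(t^2T(t))\to0$, which is (vii). The \emph{Baire} regime is then handled in parallel but on $X$ itself: $X$ Baire implies $X$ barrelled, so (i)--(v) are already equivalent, and repeating the category argument with $G_{q,n}=\{x\in X:\sup_t t^{1+1/n}q(T(t)x)\le n\}$ covering $X$ produces, after symmetrisation, a continuous-seminorm estimate $t^{\alpha_q}q(T(t)x)\le C_q p_q(x)$; degrading to rate $1$ and invoking the continuous-seminorm doubling lemma gives (iii), which closes the loop with the trivial implications (iv)$\Rightarrow$(vi) and (ii)$\Rightarrow$(vii)$\Rightarrow$(vi). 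The delicate points throughout are the continuity of the $N_q$ and the book-keeping that keeps the doubling lemma in its ``neighbourhood'' form for (iii) versus its ``bounded-set'' form for (vii); checking that the category sets are closed and genuinely exhaust $X_B$ (resp.\ $X$) and that exponential boundedness controls the range $t\in[0,1]$ are the remaining routine verifications.
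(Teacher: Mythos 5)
Your proposal is correct, and it reaches the theorem by a genuinely different route than the paper, so a comparison is in order. The paper's proof rests on three reusable devices: Proposition \ref{PREP-1} (a Banach--Steinhaus-type statement for weighted families $\{h(t)T(t)\}$, where the gain in decay comes from passing from a weight $h$ to a faster-vanishing ratio $h'/h$ inside the class $\mathcal{H}=\{t\mapsto t^{\alpha}\,;\,\alpha>1\}$); the condensation-of-singularities Lemma \ref{PREP-2} together with Proposition \ref{PREP-3}, which converts point-dependent rates into one common rate; and, for (v)$\Rightarrow$(i), a Datko argument that constructs the vector-valued space $L^{\beta}(I,X)$, applies the uniform boundedness principle to the truncated operators $\mathcal{T}_nx=\chi_{[0,n]}(\cdot)T(\cdot)x$, and then runs an induction on $n$ via the identity $\frac{1}{n+1}t^{n+1}q(T(t)x)^{\beta}=\int_0^t(t-\tau)^nq(T(t-\tau)T(\tau)x)^{\beta}d\tau$ to reach every polynomial order. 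You replace all three: the unbounded gain of exponent comes from your multiplicative doubling lemma based on $T(t)=T(t/2)T(t/2)$ (in both its continuous-seminorm and bounded-set forms), which is more elementary than the integral induction; your Datko step is the classical one, with the seminorms $N_q(x)=\bigl(\int_0^{\infty}q(T(t)x)^{\beta}dt\bigr)^{1/\beta}$ made continuous by barrelledness --- note that their lower semicontinuity (closedness of the sublevel sets of the truncated integrals) does require local equicontinuity of $\T$, which is exactly where the assumed exponential boundedness enters your argument, so this point deserves to be made explicit rather than asserted; and the Mackey-complete and Baire uniformisations are handled by explicit category arguments with the closed sets $F_{q,n}$ on the Banach space of a Banach disc, instead of Propositions \ref{PREP-2} and \ref{PREP-3}. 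What each approach buys: the paper's propositions serve simultaneously for Theorem \ref{MAIN-1} and Theorem \ref{MAIN-2}, whereas your argument is self-contained and, in the Mackey-complete regime, actually sharper --- your two-step bootstrap (rate $1$ on $B$, then rate $\alpha_q>1$ on the bounded set $B_1=\{tT(t)x\,;\,x\in B,\,t\geqslant1\}$, combined across the semigroup law) produces the universal exponent $\alpha=2$, so you prove (vi)$\Rightarrow$(ii) under Mackey completeness alone, which is strictly more than the stated equivalence (vi)$\Leftrightarrow$(vii), and iterating it would even yield (i). The remaining items you flag as routine (extending the doubling hypothesis from $\Gamma_X$ to all continuous seminorms, and the ranges $t\in[0,1]$ and $t\in[1,2]$ in the doubling step) are indeed routine.
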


\begin{theorem}\label{MAIN-3} Let $X$ be Mackey complete and $\T$ be a $C_0$-semigroup.
\begin{itemize}
\item[(1)] The following are equivalent.\vspace{2pt}
   \begin{itemize}
   \item[(i)] $\T$ is pseudo uniformly exponentially stable.\smallskip
   \item[(ii)] $\forall\:q\in\Gamma_X\;\exists\:\omega>0\;\forall\:x\in X\colon \lim_{t\rightarrow\infty}q(e^{\omega{}t}T(t)x)=0$.
   \end{itemize}\vspace{4pt}
\item[(2)] The following are equivalent.\vspace{2pt}
   \begin{itemize}
   \item[(iii)] $\T$ is pseudo strongly exponentially stable.\smallskip
   \item[(iv)] $\forall\:q\in\Gamma_X,\,B\in\mathcal{B}_X\;\exists\:\omega>0\colon\lim_{t\rightarrow\infty}q_B(e^{\omega{}t}T(t))=0$.
   \end{itemize}
\end{itemize}
\end{theorem}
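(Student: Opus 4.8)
Both equivalences contain one trivial implication: (i)$\Rightarrow$(ii) and (iv)$\Rightarrow$(iii) follow immediately by specializing the bounded set to a singleton, since $q_{\{x\}}(S)=q(Sx)$ and in each case the quantifier on $\omega$ is placed so that the chosen rate survives the specialization. The entire content therefore lies in the two reverse implications, (ii)$\Rightarrow$(i) and (iii)$\Rightarrow$(iv), both of which ask us to upgrade pointwise decay to decay that is uniform on bounded sets. My plan is to exploit Mackey completeness to replace the (a priori neither barrelled nor Baire) space $X$ by a Banach space tailored to a prescribed bounded set, and then to run the classical Banach-space arguments inside it. Observe that the ``pseudo'' formulation, in which $q$ is fixed before $\omega$ is chosen, is precisely what makes such a one--seminorm, one--bounded--set reduction legitimate.

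\textbf{The central device.} Fix a bounded set $B\in\mathcal{B}_X$ and let $D=\overline{\Gamma(B)}$ be the closure of its absolutely convex hull; then $D$ is a closed, bounded disk containing $B$. I would first record that, under Mackey completeness, every such $D$ is a Banach disk, i.e.\ the span $X_D=\bigcup_{n}nD$ equipped with the Minkowski gauge $\|\cdot\|_D$ is a Banach space whose inclusion into $X$ is continuous. Indeed a $\|\cdot\|_D$-Cauchy sequence in $X_D$ is Mackey Cauchy in $X$ with defining bounded set $D$, hence convergent in $X$; since $D$ is closed and bounded the limit lies in $X_D$ and the convergence takes place in $\|\cdot\|_D$. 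This passage is exactly where the hypothesis is consumed, and carrying it out cleanly, together with the bookkeeping that closure preserves boundedness and absolute convexity, is the step I expect to be the main obstacle.

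\textbf{Part (1), (ii)$\Rightarrow$(i).} Fix $q\in\Gamma_X$ and let $\omega>0$ be the rate from (ii), so that $q(e^{\omega t}T(t)x)\to0$ for every $x$. Since $t\mapsto e^{\omega t}q(T(t)x)$ is continuous (strong continuity) and tends to $0$, it is bounded, whence $p(x):=\sup_{t\ge0}e^{\omega t}q(T(t)x)$ is finite for every $x$. As a supremum of the continuous seminorms $x\mapsto e^{\omega t}q(T(t)x)$, the map $p$ is a lower semicontinuous seminorm that is finite on all of $X$. Restricting $p$ to the Banach space $X_D$, it becomes a finite, lower semicontinuous seminorm on a Banach space, hence continuous there (its unit ball is a barrel); consequently $M:=\sup_{x\in D}p(x)<\infty$, so $e^{\omega t}q(T(t)x)\le M$ for all $x\in B$ and $t\ge0$. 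For any $0<\omega'<\omega$ this gives $q_B(e^{\omega' t}T(t))\le Me^{(\omega'-\omega)t}\to0$, which is (i).

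\textbf{Part (2), (iii)$\Rightarrow$(iv).} Fix $q\in\Gamma_X$ and $B\in\mathcal{B}_X$ and pass to $X_D$ as above. Now the rate in (iii) depends on the point, so the additional task is to manufacture a single rate valid on all of $D$; this I would achieve by Baire category inside the Banach space $X_D$, mirroring the Banach-space argument behind Theorem A. For $n\in\mathbb{N}$ put $A_n=\{x\in X_D:\sup_{t\ge0}e^{t/n}q(T(t)x)\le n\}$. Each $A_n$ is absolutely convex and closed, as the gauge $r_n(x)=\sup_{t\ge0}e^{t/n}q(T(t)x)$ is lower semicontinuous, and $\bigcup_n A_n=X_D$: for $x$ with rate $\omega_x$ from (iii) one has $r_n(x)\le\sup_{t}e^{\omega_x t}q(T(t)x)<\infty$ as soon as $1/n\le\omega_x$, so $x\in A_n$ for large $n$. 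Since $X_D$ is Baire, some $A_N$ has non-empty interior and, being closed and absolutely convex, is a neighborhood of $0$ in $X_D$; hence $r_N(x)\le C\|x\|_D$ for a constant $C$. In particular $e^{t/N}q(T(t)x)\le C$ for all $x\in D\supseteq B$, and choosing $\omega=1/(2N)$ yields $q_B(e^{\omega t}T(t))\le Ce^{-t/(2N)}\to0$, which is (iv).
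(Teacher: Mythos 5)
Your proof is correct, and its central device --- using Mackey completeness to turn the closed absolutely convex hull $D$ of a given bounded set into a Banach disk and then arguing inside the Banach space $X_D$ --- is precisely the paper's reduction (there, $B$ is placed in a Banach disk $B_0$ and the maps $T(t)\colon X_{B_0}\rightarrow X_q$ are considered). The difference is in how the Banach-space step is executed. The paper factors it through two previously established results: Proposition \ref{PREP-3} (a condensation-of-singularities/Baire argument converting the point-dependent rates of (iii) into one rate valid on all of $X_{B_0}$) followed by Proposition \ref{PREP-1} (barrelledness/uniform boundedness, upgrading strong convergence to convergence in $L_b(X_{B_0},X_q)$ at the cost of one passage from $h$ to $h'$); part (1) needs only the second of these. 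You instead inline both mechanisms: in part (1) the lower semicontinuous seminorm $p(x)=\sup_{t\geqslant0}e^{\omega t}q(T(t)x)$ together with the barrel $\{x\in X_D\:;\:p(x)\leqslant1\}$ is exactly the uniform boundedness principle in $X_D$, and in part (2) your sets $A_n$ encode the rate $1/n$ and the bound $n$ simultaneously, so a single Baire application in $X_D$ does the combined work of Propositions \ref{PREP-3} and \ref{PREP-1}, with the same harmless loss from $1/N$ to $1/(2N)$. What the paper's route buys is reusability, since the two propositions also drive Theorems \ref{MAIN-1} and \ref{MAIN-2}; what yours buys is a shorter, self-contained argument for this particular theorem --- essentially the classical Banach-space category proof behind Theorem A transplanted into the local Banach space $X_D$. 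One point you should make explicit in part (1): the final rate $\omega'$ can be fixed, say $\omega'=\omega/2$, independently of $B$, since only the constant $M$ depends on $B$; this is what preserves the quantifier order $\exists\:\omega'\;\forall\:B$ demanded by (i).
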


The space $X$ is a Montel space if $X$ is barrelled and every bounded subset of $X$ is relatively compact; normed Montel spaces are thus necessarily of finite dimension.

\begin{theorem}\label{MAIN-4} Let $X$ be a Montel space and $\T$ be a bounded $C_0$-semigroup. The following are equivalent.
\begin{itemize}
\item[(i)] $\T$ is uniformly stable.\smallskip
\item[(ii)] $\T$ is strongly stable.
\end{itemize}
\end{theorem}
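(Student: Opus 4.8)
The plan is to dispatch the easy implication first and then concentrate on the substantial one. The implication \textbf{(i)}$\,\Rightarrow\,$\textbf{(ii)} is immediate: strong stability is nothing but the instance of uniform stability in which the bounded set $B$ ranges only over the singletons $\{x\}$, $x\in X$, so it holds a fortiori. All the content lies in the converse \textbf{(ii)}$\,\Rightarrow\,$\textbf{(i)}, which I would establish by an Arz\'ela--Ascoli type argument that upgrades pointwise convergence to convergence uniform on bounded sets, using two features of the setting: equicontinuity of $\T$ and relative compactness of bounded sets.

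First I would record that $\T$ is equicontinuous. Every Montel space is by definition barrelled, and a bounded $C_0$-semigroup is precisely a bounded subset of $L_b(X)$; as observed after Definition \ref{DFN-SEMIGROUP}, on a barrelled space this boundedness is equivalent to equicontinuity. Concretely, this provides for each $q\in\Gamma_X$ a seminorm $p\in\Gamma_X$ and a constant $M\geqslant1$ with $q(T(t)x)\leqslant Mp(x)$ for all $t\geqslant0$ and all $x\in X$. The decisive point is that this single pair $(p,M)$ works simultaneously for every time $t$, which is exactly what equicontinuity, as opposed to mere pointwise boundedness, delivers.

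Next I would fix $B\in\mathcal{B}_X$, $q\in\Gamma_X$ and $\epsilon>0$, and exploit that $X$ is Montel: the closure of $B$ is compact, hence $B$ is precompact (totally bounded). With $(p,M)$ chosen as above, the set $U:=\{x:p(x)<\epsilon/(2M)\}$ is a zero neighborhood, so precompactness yields finitely many points $x_1,\dots,x_N$ with $B\subseteq\bigcup_{i=1}^N(x_i+U)$. By strong stability \textbf{(ii)} each trajectory satisfies $q(T(t)x_i)\to0$, so there is $t_0$ with $q(T(t)x_i)<\epsilon/2$ for all $t\geqslant t_0$ and all $i$. For an arbitrary $x\in B$ I pick $i$ with $x-x_i\in U$; then $q(T(t)(x-x_i))\leqslant Mp(x-x_i)<\epsilon/2$, and the triangle inequality gives $q(T(t)x)\leqslant q(T(t)x_i)+q(T(t)(x-x_i))<\epsilon$ for $t\geqslant t_0$. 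Taking the supremum over $x\in B$ yields $q_B(T(t))\leqslant\epsilon$ for $t\geqslant t_0$, i.e.\ $\lim_{t\to\infty}q_B(T(t))=0$, which is uniform stability.

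I expect no serious obstacle beyond correct bookkeeping. The only genuinely structural input is the passage from boundedness to equicontinuity (a Banach--Steinhaus phenomenon powered by barrelledness), and this is already furnished by the remark following Definition \ref{DFN-SEMIGROUP}; the only place where Montelness itself is used is in producing the finite subcover of $B$. The subtle point to keep in mind is that the estimate $q(T(t)(x-x_i))<\epsilon/2$ must hold uniformly in $t$, which is precisely why equicontinuity, rather than a $t$-dependent bound, is indispensable; once this is secured the finite subcover converts the $N$ pointwise limits into a single uniform one.
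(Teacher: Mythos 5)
Your proof is correct and follows essentially the same route as the paper: both deduce equicontinuity of $\T$ from boundedness via barrelledness, then use the Montel property (you via total boundedness of $B$, the paper via compactness of $\overline{B}$) to reduce to finitely many trajectories and conclude with a triangle-inequality estimate. The two finite-cover arguments are interchangeable, so there is nothing to add.
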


Uniform exponential stability means that $\lim_{t\rightarrow\infty}e^{\omega{}t}T(t)=0$ holds in $L_b(X)$ for some $\omega>0$. Analogously, uniform stability and super polynomial stability can be rephrased in terms of a convergence statement in $L_b(X)$. In contrast, strong exponential stability and the two \textquotedblleft{}pseudo properties\textquotedblright{} a priori do not allow for an interpretation in a similar manner, since in these definitions $\omega$ may depend on the points of the space or even on the seminorms inducing the topology.

%%%%%%%%%%%%%%%%%%%%%%%%%%%%%%%%%%%%%%%%%%%%%%%%%%%%%%%%%%%%%%%%%%%%%%%%%%%%%%%%%%%%%%%
%                                                                                     %
%  3 Examples                                                                         %
%                                                                                     %
%%%%%%%%%%%%%%%%%%%%%%%%%%%%%%%%%%%%%%%%%%%%%%%%%%%%%%%%%%%%%%%%%%%%%%%%%%%%%%%%%%%%%%%
\section{Examples}\label{Examples}

\subsection{Shift Semigroups}

In all examples of this section, $\T$ denotes the right shift operator, i.e.,
$$
(T(t)f)(s)=f(s-t)
$$
for $s-t\in\mathrm{\Omega}$ and $(T(t)f)(s)=0$ otherwise; $f\colon\mathrm{\Omega}\rightarrow\mathbb{C}$ is a function on $\mathrm{\Omega}=\mathbb{R}$, $[0,\infty)$ or $(-\infty,0]$.

\begin{example}\label{SHIFT-1} We consider the right shift $\T$ on the space $C(\mathbb{R})$ endowed with the topology $\tau_{\ptw}$ of pointwise convergence generated by the seminorms $q_s(f)=|f(s)|$, $s\in\mathbb{R}$. It is clear that $\T$ defines a $C_0$-semigroup on the latter space. This semigroup is not even strongly stable in the sense of Definition \ref{DFN-STABILITY}(vii). The same holds on $C(-\infty,0]$, let us thus consider $X=\{f\in C[0,\infty)\:;\:f(0)=0\}$. For a seminorm $q_s$, $s\geqslant0$, a bounded set $B\in\mathcal{B}_X$ and arbitrary $\omega>0$ we compute $\sup_{f\in B}q_s(e^{\omega{}t}(T(t)f)(s))=0$ whenever $t\geqslant{}s$, i.e., the right shift is uniformly exponentially stable on $X$. Let us mention that $\T$ fails to be bounded: Consider $B=\{f_n\:;\:n\geqslant2\}$ where the hat functions
$$
f_n(s)=
\begin{cases}
\;\hspace{32pt}0 &0\leqslant s<1-2/n \text{ or } 1\leqslant s\\
\;n^2(s-(1-2/n)) &1-2/n\leqslant s<1-1/n\\
\;\hspace{12pt}-n^2(s-1) &1-1/n\leqslant s<1
\end{cases}
$$
are inspired by K\"uhnemund \cite[1.6(b)]{Kuehnemund}. For $s\geqslant0$ and $n\geqslant0$ we have $q_s(f_n)=|f_n(s)|\rightarrow0$ for $n\rightarrow\infty$. Thus, $B$ is bounded in $X$ and for $s=1$ we have $\sup_{t\geqslant0}\sup_{f\in B}q_s(T(t)f)\geqslant\sup_{n\geqslant2}|(T(1/n)f_n)(1)|=\sup_{n\geqslant2}f_n(1-1/n)=\infty$, whence $\T$ is unbounded.
\end{example}

The somewhat unintuitive effect of a stable but unbounded $C_0$-semigroup as illustrated above can be attributed to the underlying space' failure of being barrelled, cf.~Remark \ref{REM-1}.

\begin{observation}\label{OBS-1} Let $\T$ be a $C_0$-semigroup on a barrelled space $X$. Assume $\lim_{t\rightarrow\infty}T(t)=0$ in $L_b(X)$. Then $\T$ is bounded.
\end{observation}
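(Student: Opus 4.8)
The plan is to deduce boundedness from the uniform boundedness principle for barrelled spaces, after first verifying that every orbit of the semigroup is bounded in $X$. Recall that $\lim_{t\rightarrow\infty}T(t)=0$ in $L_b(X)$ means precisely that $\lim_{t\rightarrow\infty}q_B(T(t))=0$ for all $B\in\mathcal{B}_X$ and $q\in\Gamma_X$, i.e., $\T$ is uniformly stable in the sense of Definition \ref{DFN-STABILITY}(vi). The goal is to produce, for each $B\in\mathcal{B}_X$ and $q\in\Gamma_X$, a constant $C\geqslant0$ with $q_B(T(t))\leqslant C$ for all $t\geqslant0$, which is condition (v) of Definition \ref{DFN-SEMIGROUP}.

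First I would fix $x\in X$ and show that the orbit $\{T(t)x:t\geqslant0\}$ is bounded in $X$, splitting $[0,\infty)$ into a compact piece and a tail. By strong continuity (Definition \ref{DFN-SEMIGROUP}(iii)) the map $t\mapsto T(t)x$ is continuous, so its image over any compact interval $[0,t_0]$ is compact, hence bounded. For the tail, taking the singleton $B=\{x\}$ in the hypothesis yields $q(T(t)x)=q_{\{x\}}(T(t))\rightarrow0$ as $t\rightarrow\infty$ for every $q\in\Gamma_X$; in particular $q(T(t)x)\leqslant1$ for all $t$ beyond some $t_0=t_0(q,x)$. Combining the two pieces gives $\sup_{t\geqslant0}q(T(t)x)<\infty$ for each $q$, so the orbit is bounded.

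Next I would invoke the Banach--Steinhaus theorem for barrelled spaces: a simply bounded family in $L(X)$ is automatically equicontinuous. Since every orbit of $\T$ is bounded, the family $\{T(t):t\geqslant0\}$ is simply bounded, hence equicontinuous. Equicontinuity means that for each $q\in\Gamma_X$ there exist $p\in\Gamma_X$ and $C\geqslant0$ with $q(T(t)x)\leqslant C\,p(x)$ for all $t\geqslant0$ and $x\in X$. Finally, given $B\in\mathcal{B}_X$ and $q\in\Gamma_X$, set $M_B:=\sup_{x\in B}p(x)<\infty$, finite since $B$ is bounded and $p$ is continuous; then $q_B(T(t))=\sup_{x\in B}q(T(t)x)\leqslant C\,M_B$ uniformly in $t$, which is exactly the asserted boundedness. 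This last passage is precisely the equivalence, noted after Definition \ref{DFN-SEMIGROUP}, between equicontinuity of $\T$ and its boundedness on barrelled spaces.

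The only genuine obstacle is the passage from pointwise boundedness of the orbits to the joint bound, and it is resolved entirely by barrelledness through Banach--Steinhaus. Without this hypothesis the orbits could stay bounded while $\{T(t):t\geqslant0\}$ fails to be equicontinuous, which is exactly the phenomenon exhibited by the stable yet unbounded shift semigroup of Example \ref{SHIFT-1}; this explains why barrelledness cannot be dropped.
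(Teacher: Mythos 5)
Your proof is correct, but it takes a genuinely different route from the paper's. The paper fixes $B\in\mathcal{B}_X$ and $q\in\Gamma_X$, uses the hypothesis directly to get $q_B(T(t))\leqslant1$ for $t\geqslant t_0$, and then handles the compact piece $[0,t_0]$ by citing K\={o}mura's theorem that a $C_0$-semigroup on a barrelled space is locally equicontinuous, hence locally bounded; adding the two bounds finishes the proof. You instead use the hypothesis only on singletons: the tail of each orbit is controlled by $q_{\{x\}}(T(t))\rightarrow0$, the compact piece by strong continuity (continuous image of a compact interval is compact, hence bounded), and then one global application of Banach--Steinhaus on the barrelled space $X$ upgrades pointwise boundedness of $\{T(t)\::\:t\geqslant0\}$ to equicontinuity, which yields boundedness on bounded sets. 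The two arguments rest on the same underlying principle --- K\={o}mura's local equicontinuity result is itself a Banach--Steinhaus argument on compact time intervals --- but your version buys something the paper's does not: you never use convergence in $L_b(X)$, only the pointwise convergence $T(t)x\rightarrow0$ (indeed only boundedness of each orbit), so you have actually proved the stronger statement that on a barrelled space a strongly stable, or even merely orbit-bounded, $C_0$-semigroup is bounded and equicontinuous. The paper's proof, conversely, is shorter once K\={o}mura's result is taken as known, and stays at the level of the bounded set $B$ throughout. Your closing remark is also on target: Example \ref{SHIFT-1} is precisely the phenomenon showing barrelledness cannot be dropped, there the orbits remain bounded while the family fails to be equicontinuous.
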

\begin{proof} Let $B\in\mathcal{B}$ and $q\in\Gamma_X$ be given. From our assumption on $\T$ it follows that there is $t_0\geqslant0$ such that $q_B(T(t))\leqslant1$ holds for all $t\geqslant{}t_0$. Since $X$ is barrelled, K{\=o}mura \cite[1.1]{Komura} implies that $\T$ is locally equicontinuous, i.e., $\{T(t)\:;\:0\leqslant{}t\leqslant{}t_0\}\subseteq{}L(X)$ is equicontinuous for all $t_0\geqslant0$. In particular, the latter sets are all bounded in $L_b(X)$, v.i.z., $\T$ is locally bounded, and thus there is $C\geqslant0$ such that $q_B(T(t))\leqslant{}C$ for $0\leqslant{}t\leqslant{}t_0$. Consequently, $q_B(T(t))\leqslant1+C$ for all $t\geqslant0$ which shows that $\T$ is bounded.
\end{proof}

In order to avoid pathologies of the type above let us endow our space with a topology which turns it into a barrelled space.

\begin{example}\label{SHIFT-2} Consider the right shift on the space $C(\mathbb{R})$ endowed with the compact open topology $\tau_{\co}$ generated by the seminorms $q_n(f)=\sup_{s\in[-n,n]}|f(s)|$, $n\geqslant1$. Again, $\T$ defines a $C_0$-semigroup on this space which is not even strongly stable. On this space, $\T$ provides an example for a non exponentially bounded semigroup: Put $n=1$ and let $m$, $M$ and $\omega>0$ be given. Select $f\in C(\mathbb{R})$ such that $\supp(f)\subseteq[-(m+2),-m]$, $0\leqslant{}f\leqslant1$, $f(-(m+1))=1$. Compute $p_n(T(m+1)f)=\sup_{s\in[-1,1]}|f(s-(m+1))|\geqslant|f(-(m+1))|=1$ and $Me^{\omega{}t}p_m(f)=0$ as $f|_{[-m,m]}\equiv{}0$. On the space $C(-\infty,0]$ the semigroup $\T$ has the same properties and we consider $X=\{f\in C[0,\infty)\:;\:f(0)=0\}$. As in the case of pointwise convergence $\T$ is uniformly exponentially stable when considered on this space and in particular exponentially bounded by Observation \ref{OBS-1}. Note that $C(\mathbb{R})$ is barrelled by a 
result of Nachbin \cite{Nachbin} and Shirota \cite{Shirota}, see Jarchow \cite[11.7.5]{Jarchow}, and this implies that $X$ is barrelled by a result of Dieudonn\'{e} \cite{Dieudonne}, see Bonet, Perez Carreras \cite[4.3.1]{BPC}.
\end{example}

Next, we consider the shift on the space of compactly supported continuous functions both with pointwise and compact convergence.

\begin{example}\label{SHIFT-3} The right shift $\T$ on $(C_c(\mathbb{R}),\tau_{\co})$ is strongly exponentially stable: Given $f\in C_c(\mathbb{R})$ and $n\in\mathbb{N}$ we have $(T(t)f)|_{[-n,n]}\equiv0$ if $t$ is big enough. On the other hand, $\T$ is not uniformly stable: Put $n=1$ and $B=\{f_j\:;\:j\geqslant1\}$ where $f_j\in C_c(\mathbb{R})$ with $0\leqslant{}f_j\leqslant{}1$ and $f_j(-j)=1$ for all $j\geqslant1$. Since $q_n(f_j)\leqslant1$ holds for all $j$ and $n$ the set $B$ is bounded, but $\sup_{f\in B}q_n(T(t)f)=\sup_{j\geqslant1}\sup_{s\in[-1,1]}|f_j(s-t)|\geqslant|f_t(-t)|=1$ holds for any integer $t$. On $(C_c(-\infty,0],\tau_{\co})$, $(C_c(\mathbb{R}),\tau_{\ptw})$ and $(C_c(-\infty,0]),\tau_{\ptw})$ the same result is true, whereas on $\{f\in C_c[0,\infty)\:;\:f(0)=0\}$ the shift is uniformly exponentially stable w.r.t.~$\tau_{\co}$ and $\tau_{\ptw}$. The first setting considered above shows that both statements of Theorem \ref{MAIN-3} are wrong if Mackey completeness is dropped: Theorem
\ref{MAIN-3}(i) and Theorem \ref{MAIN-3}(iv) imply uniform stability and thus cannot hold whereas Theorem \ref{MAIN-3}(ii) and Theorem \ref{MAIN-3}(iii) both follow from strong exponential stability.
\end{example}

The topologies considered above are not the natural choices for a topology on spaces of compactly supported continuous functions, for instance the spaces above are all incomplete and non-barrelled. Let us thus as a last example consider the shift again on the space of compactly supported functions but with another topology.

\begin{example}\label{SHIFT-4} We endow $C_c(\mathbb{R})$ with the locally convex inductive topology  $\tau_{\I}$ of the inclusion maps $C_K(\mathbb{R})\rightarrow C_c(\mathbb{R})$ where for $K\subseteq\mathbb{R}$ compact $C_K(\mathbb{R})=\{f\in C_c(\mathbb{R})\:;\:\supp f\subseteq K\}$ is endowed with the supremum norm. By Bierstedt \cite[Example 1.7]{Bierstedt1988} the space $(C_c(\mathbb{R}),\tau_{\I})=\ind{n\in\mathbb{N}}C_{[-n,n]}(\mathbb{R})$ is a complete, hence regular, LB-space. A fundamental system of seminorms is given by $q_v(f)=\sup_{s\in\mathbb{R}}v(x)|f(x)|$, $v\in C^+(\mathbb{R})=\{w\in C(\mathbb{R})\:;\:w\geqslant0\}$, see \cite[Proposition 1.13]{Bierstedt1988}. Using the universal property of the inductive limit it follows that $\T$ is a $C_0$-semigroup. For $v\equiv{}1\in C^+(\mathbb{R})$ we have $p_v(T(t)f)=\sup_{s\in\mathbb{R}}|f(s)|$ which shows that $\T$ is not strongly stable. On the space $X=\ind{n}X_n$ with $X_n=\{f\in C_{[0,n]}(\mathbb{R})\:;\:f(0)=0\}$ the $C_0$-semigroup $\T$ is 
also 
not strongly stable: $X$ is continuously included in $(C_c(\mathbb{R}),\tau_{\I})$ whence its topology is finer than those generated by the seminorms $q_v$, $v\in C^+[0,\infty)$. Consider finally the shift $\T$ on the space $(C_c(-\infty,0],\tau_{\I})=\ind{n}C_{[-n,0]}(-\infty,0]$. Again, $\T$ is a $C_0$-semigroup, but now $\T$ is uniformly exponentially stable since by regularity for any bounded set $B$ there exists $n\geqslant1$ such that $B\subseteq C_{[-n,0]}(-\infty,0]$ is norm bounded, and thus $B$ is annihilated by the right shift for all $t\geqslant0$ suitable large.
\end{example}

\subsection{Multiplication Semigroups}

In all examples of this section, $\T$ denotes the multiplication semigroup
$$
T(t)x=(e^{q_jt}x_j)_{j\in\mathbb{N}}
$$
for $t\geqslant0$ and $x=(x_j)_{j\in\mathbb{N}}\subseteq\mathbb{C}$, where we assume that $(q_j)_{j\in\mathbb{N}}\subseteq\mathbb{C}$ satisfies $\sup_{j\in\mathbb{N}}\Re q_j<\infty$. On all spaces which we consider in the subsequent examples, the multiplication semigroup $\T$ is uniformly exponentially stable, if all $\Re q_j$ are negative and bounded away from zero. If on the other hand $\Re q_j\geqslant0$ holds for some $j$, then $\T$ will not even be strongly stable. We thus concentrate below on the remaining cases, i.e., we assume that $\Re q_j<0$ for all $j$ and $\limsup_{j\rightarrow\infty}\Re q_j=0$. The second assumption can clearly be dropped in all positive results.
\smallskip
\\We start with considering $\T$ on the standard example of a non-barrelled normed space.

\begin{example}\label{MULT-1} Denote by $\varphi$ the space of finite sequences which we endow in this example with the supremum norm and consider the multiplication semigroup $\T$ under the assumptions we made above. Then $\T$ is strongly exponentially stable: Given $x$ select $0<\omega<c:=|\max_{j\in\supp{}x}\Re q_j|$ and estimate $\|e^{\omega{}t}T(t)x\|_{\infty}\leqslant{}\|x\|_{\infty}e^{(\omega-c)t}$. On the other hand, for any $\omega>0$ we can choose $j_0$ with $\omega+\Re q_{j_0}>0$, that is, $\omega$ cannot be selected independently of $x$. Even worser, with $B=\{(\delta_{j,j_0})_{j\in\mathbb{N}}\:;\:j_0\in\mathbb{N}\}$ we get that $\sup_{x\in B}\|T(t)x\|\geqslant\sup_{j_0\in\mathbb{N}}e^{t\Re{}q_{j_0}}\geqslant e^0=1$ and $\T$ cannot be uniformly stable.
\end{example}

Firstly, the previous example shows that a semigroup can be strongly exponentially stable without being uniformly exponentially stable nor satisfying  Theorem \ref{MAIN-1}(v). Secondly, in the previous example, Theorem \ref{MAIN-2}(vi) holds but $\T$ is not super polynomially stable. In addition, it can be seen directly that Theorem \ref{MAIN-2}(vii) is not satisfied. Consequently we get that the equivalences in the second and third statements of Theorem \ref{MAIN-1} and Theorem \ref{MAIN-2} fail in general.
\smallskip
\\In the next example we endow the space $\varphi$ from above with a more natural topology.

\begin{example}\label{MULT-2} We endow $\varphi=\oplus_{j\in\mathbb{N}}\mathbb{C}$ with the direct sum topology $\tau_{\s}$, which yields a complete barrelled space, cf.~Meise, Vogt \cite[24.4]{MeiseVogtEnglisch}. As above we consider the multiplication semigroup $\T$. Let $B$ be a bounded set in $(\varphi,\tau_{\s})$. Then by \cite[24.2]{MeiseVogtEnglisch} there exists $N\geqslant1$ such that $\supp{}b\subseteq\{1,\dots,N\}$ holds for all  $b\in B$. Thus, with $0<\omega<|\max_{j=1,\dots,N}\Re q_j|$ it follows that $q_B(e^{\omega{}t}T(t))$ converges to zero for any continuous seminorm $q$ on $(\varphi,\tau_{\s})$ if $t\rightarrow\infty$ and therefore $\T$ is strongly exponentially stable and then also pseudo strongly exponentially stable. On the other hand, consider the continuous seminorm $q(x)=\sum_{j\in\mathbb{N}}|x_j|$ on $(\varphi,\tau_{\s})$, cf.~\cite[Definition on p.~276]{MeiseVogtEnglisch}. For given $\omega>0$ select $j_0$ such that $\omega+\Re q_{j_0}>0$ and put $x=(\delta_{j,j_0})_{j\in\mathbb{N}}
$. 
Then, $q(e^{\omega{}t}T(t)x)=e^{(\omega+\Re q_{j_0})t}$ does not converge to zero, i.e., $\T$ is not pseudo uniformly exponentially stable and thus also cannot be uniformly exponentially stable. Therefore, this example on the one hand shows that for the equivalence of all properties of Theorem \ref{MAIN-1} it is not enough if the underlying space is barrelled and Mackey complete; in fact the above space is even a complete LB-space. Let us add that the $C_0$-semigroup $\T$ with $(T(t)f)(s)=e^{q(s)t}f(s)$ for $t\geqslant0$ and $s\in\mathbb{R}$ defined on the space $(C_c(\mathbb{R}),\tau_{\I})$ is a continuous variant of Example \ref{MULT-2} having exactly the same properties.
\end{example}

In the remainder of this section we study semigroups on K\"othe echelon spaces. For this purpose let $A=(a_{j,k})_{j,k\in\mathbb{N}}$ be a K\"othe matrix, i.e., $0\leqslant{}a_{j,k}\leqslant a_{j,k+1}$ holds for all $j$, $k\in\mathbb{N}$ and for every $j\in\mathbb{N}$ there exists $k\in\mathbb{N}$ such that $a_{j,k}>0$ holds. We consider the complex K\"othe echelon spaces, see \cite[Section 27]{MeiseVogtEnglisch},
\begin{eqnarray*}
\lambda^{\infty}(A)&=&\big\{x\in\mathbb{C}^\mathbb{N}\:;\:\forall\:k\in\mathbb{N}\colon\:\|x\|_k=\sup_{j\in\mathbb{N}} a_{j,k}|x_j|<\infty\big\},\\
c_0(A)&=&\big\{x\in\lambda^{\infty}(A)\:;\:\forall\:k\in\mathbb{N}\colon\:\lim_{j\rightarrow\infty}a_{j,k}|x_j|=0\big\}
\end{eqnarray*}
which are Fr\'echet spaces with respect to the fundamental system $(\|\cdot\|_k)_{k\in\mathbb{N}}$ of seminorms. We consider the multiplication semigroup on $c_0(A)$, where it is  equicontinuous since $|e^{q_jt}|=e^{t\Re q_j}\leqslant1$ holds for any $t>0$ and any $j\in\mathbb{N}$; in particular $\T$ is exponentially bounded. That $\T$ is a $C_0$-semigroup can be seen as follows: Let $x\in c_0(A)$ be fixed, $k\in\mathbb{N}$ be arbitrary. We show that $\|T(t)x-x\|_k\rightarrow0$ holds for $t\rightarrow 0$. Let $\epsilon>0$ be given. Put $\epsilon_0=\epsilon/(C+1+\|x\|_k)$, where $C=\sup_{0\leqslant t\leqslant1}\sup_{j\in\mathbb{N}}|e^{tq_j}-1|<\infty$. Since $x$ belongs to $c_0(A)$ there exists $J\in\mathbb{N}$ such that $a_{j,k}|x_j|<\epsilon_0$ holds for all $j\geqslant J$. We fix $1\leqslant j\leqslant J$. Since $e^{tq_j}\rightarrow1$ holds for $t\rightarrow0$ there exists $0<t_j\leqslant1$ with $|e^{tq_j}-1|<\epsilon_0$ for all $0<t\leqslant t_j$. We put $t_0:=\min_{j=1,\dots,J}t_j\in\,(0,1]$ and estimate
\begin{eqnarray*}
\|T(t)x-x\|_k & = & \sup_{j\in\mathbb{N}}a_{j,k}\big|e^{tq_j}x_j-x_j|\\
              & \leqslant & \sup_{j=1,\dots,J}a_{j,k}\big|e^{tq_j}-1\big|\,|x_j|+\sup_{j>J}a_{j,k}\big|e^{tq_j}-1\big|\,|x_j|\\
              & \leqslant & \epsilon_0\sup_{j=1,\dots,J}a_{j,k}|x_j|+C\sup_{j>J}a_{j,k}|x_j|\\
              & \leqslant & \epsilon_0\,\|x\|_k+C\,\epsilon_0<\epsilon_0(\|x\|_k+C+1)=\epsilon
\end{eqnarray*}
for arbitrary $0<t\leqslant t_0$. Thus, $T(\cdot)x$ is continuous at zero for arbitrary $x$. By \cite[Remark 1(iii)]{ABR}, $\T$ is a $C_0$-semigroup.

\begin{example}\label{MULT-3} The multiplication semigroup $\T$ on $c_0(A)$ is strongly stable. In order to see this, first fix $x\in\varphi$ and $k\in\mathbb{N}$. Let $\epsilon>0$ be given and put $C=\max_{j\in\supp x}\Re q_j<0$. We select $t_0\geqslant0$ such that $\|x\|_k e^{t_0C}<\epsilon$. For $t\geqslant t_0$ we estimate
\begin{eqnarray*}
\|T(t)x\|_k&=&\sup_{j\in\mathbb{N}}a_{j,k}|e^{tq_j}x_j|\;=\sup_{j\in\supp x}a_{j,k}|e^{tq_j}|\,|x_j|\\
&\leqslant& \sup_{j\in\supp x}a_{j,k}|x_j|\cdot\sup_{j\in\supp x}|e^{tq_j}| \;=\; \|x\|_k \max_{j\in\supp x}|e^{tq_j}|\\
           &=& \|x\|_k\,e^{t\max_{j\in\supp x} \Re q_j}=\|x\|_k\,e^{tC}<\epsilon.
\end{eqnarray*}
Let now $x\in c_0(A)$, $k\in\mathbb{N}$ and $\epsilon>0$ be arbitrary. We select $y\in\varphi$ such that $\|x-y\|_k<\frac{\epsilon}{2}$. By the above there exists $t_0\geqslant0$ such that $\|T(t)y\|_k<\frac{\epsilon}{2}$ holds for all $t\geqslant t_0$. We estimate
$$
\|T(t)x\|_k\leqslant\|T(t)(x-y)\|_k+\|T(t)y\|_k<\frac{\epsilon}{2}+\frac{\epsilon}{2}=\epsilon
$$
since $\sup_{j\in\mathbb{N}}\Re q_j\leqslant0$ implies
$$
\|T(t)(x-y)\|_k\leqslant \|x-y\|_k\sup_{j\in\mathbb{N}}|e^{tq_j}|=\|x-y\|_k\,e^{t\sup_{j\in\mathbb{N}}\Re q_j}\leqslant\|x-y\|_k
$$
for arbitrary $t\geqslant t_0$. On the other hand, $\T$ does not satisfy the equivalent properties of Theorem \ref{MAIN-1}: For any $\omega>0$ there is $j_0$ such that $\omega+\Re q_{j_0}>0$ holds. With $x=(\delta_{j,j_0})_{j\in\mathbb{N}}$ and $k$ such that $a_{j_0,k}>0$ it follows that $\|e^{\omega t}T(t)x\|_k=a_{j_0,k}e^{(\omega+\Re q_{j_0})t}$ does not converge to zero for $t\rightarrow\infty$. 
\end{example}

In Example \ref{MULT-3} we considered an arbitrary K\"othe matrix $A$. In the sequel, we deduce properties of $\T$ from properties we require for $A$. We start with the space $\mathbb{C}^{\mathbb{N}}$ of all sequences.

\begin{example}\label{MULT-4} For $A=(a_{j,k})_{j,k\in\mathbb{N}}$ with $a_{j,k}=1$ for $j\leqslant{}k$ and zero otherwise, we have $c_0(A)=\mathbb{C}^\mathbb{N}$ carrying the topology of pointwise convergence. Remember that $\Re q_j<0$ holds for all $j$ and $\limsup_{j\rightarrow\infty}=0$ by our assumptions on $(q_j)_{j\in\mathbb{N}}$. The semigroup $\T$ is pseudo uniformly exponentially stable and thus also pseudo strongly uniformly exponentially stable: Given $k$ we select $0<\omega<|\max_{j=1,\dots,k}\Re q_j|$. For $x\in\mathbb{C}^\mathbb{N}$ we have
\begin{eqnarray*}
\|e^{\omega t}T(t)x\|_k &=& \max_{j=1,\dots,k}|x_j|\cdot{}e^{(\omega +\Re q_j)t}\\
&\leqslant& \|x\|_k \cdot \max_{j=1,\dots,k}e^{(\omega +\Re q_j)t}\\
&\leqslant& \|x\|_k \cdot e^{(\omega+\max_{j=1,\dots,k}\Re q_j)t}.
\end{eqnarray*}
Since $\sup_{x\in B}\|x\|_k<\infty$ holds for any bounded $B$ it follows that $\sup_{x\in B}\|e^{\omega{}t}T(t)x\|_k$ converges to zero for $t\rightarrow\infty$. On the other hand for $x=(1,1,\dots)$ and $\omega>0$ we can select $j_0$ such that $\omega+\Re q_{j_0}>0$ holds, for $k\geqslant j_0$ whence $\|e^{\omega{}t}T(t)x\|_k\geqslant e^{(\omega+\Re q_{j_0})t}$ does not converge to zero as $t\rightarrow\infty$. Thus, $T$ is not strongly exponentially stable and thus also not uniformly exponentially stable.
\end{example}

Next, we consider the space $s$ of rapidly decreasing sequences.

\begin{example}\label{MULT-5} For $A=(a_{j,k})_{j,k\in\mathbb{N}}$ with $a_{j,k}=j^k$ we have $c_0(A)=s$ endowed with the usual topology. We consider the multiplication semigroup $\T$ with a sequence $(q_j)_{j\in\mathbb{N}}$ such that $\Re(q_j)=-1/j$. Then, $\T$ is super polynomially stable: For given $k$ select $m=k+3$. Then, $a_{j,k}=j^k=j^{k+3}/j^{3}=a_{j,m}/j^3$. For $x\in s$ estimate
\begin{eqnarray*}
\|t^2T(t)x\|_k &=& \sup_{j\in\mathbb{N}}a_{j,k}|t^2e^{tq_j}x_j| = t^2 \sup_{j\in\mathbb{N}}a_{j,m}j^{-3}e^{-t/j}|x_j|\\
&\leqslant& t^2 \|x\|_m\sup_{j\in\mathbb{N}}j^{-3}e^{-t/j}\leqslant{}t^2\|x\|_m 27e^{-3}t^{-3}
\end{eqnarray*}
where the last estimate is true for all $t\geqslant t_0$, $t_0$ suitable large. Consequently, $\sup_{\|x\|_m\leqslant1}\|t^2T(t)x\|_k$ converges to zero for $t\rightarrow\infty$. This yields Theorem \ref{MAIN-2}(iii) with $\alpha=2$. On the other hand, $\T$ is not pseudo strongly exponentially stable: Let $k$ be given. Select $x$ such that $|x_j|>0$ holds for all $j$ and let $\omega>0$ be given. Select $j_0$ such that $\omega+\Re q_{j_0}>0$ holds. Then $\|e^{\omega{}t}T(t)x\|_k\geqslant{}a_{j_0,k}e^{(\omega+\Re q_{j_0})t}|x_{j_0}|$ does not tend to zero for $t\rightarrow\infty$.
\end{example}

The last conclusion of Example \ref{MULT-5} holds already under the assumption on $A$ that there exists $k$ such that $a_{j,k}>0$ holds for all $j$ and for all sequences $(q_j)_{j\in\mathbb{N}}$ with  $\liminf_{j\rightarrow\infty}\Re q_j\geqslant0$.

\begin{example}\label{MULT-6} We keep our assumption that $\Re q_j<0$ holds for all $j$ and assume $\lim_{j\rightarrow\infty} \Re q_j=0$. Then for the multiplication semigroup $\T$ on $c_0(A)$ the following are equivalent.
\begin{itemize}\smallskip
\item[(i)] $\displaystyle\forall\:k\in\mathbb{N}\;\exists\:m\in\mathbb{N}\colon\lim_{j\rightarrow\infty}\frac{a_{j,k}}{a_{j,m}}=0$.\smallskip
\item[(ii)] $\displaystyle\forall\:k\in\mathbb{N}\;\exists\:m\in\mathbb{N}\colon\lim_{j\rightarrow\infty} \sup_{\|x\|_m\leqslant1}\|T(t)x\|_k=0$.\vspace{3pt}
\end{itemize}
The condition in (i) is well-known but under different names and characterizes the Schwartz property for $c_0(A)$. In the sequel we use the notation of Bierstedt, Meise, Summers \cite[4.1]{BMS1982a} and say that $A$ satisfies (S) in this case. Condition (ii) implies uniform stability.
\smallskip
\\\textquotedblleft{}(i)$\Rightarrow$(ii)\textquotedblright{}: Let $k$ be given. Select $m$ as in (i). W.l.o.g.~we may assume that $m\geqslant k$ holds. Let $\epsilon>0$ be given. By (i) there exists $J$ such that $a_{j,k}/a_{j,m}<\frac{\epsilon}{2}$ holds for all $j>J$. Put $q_0=\max_{j=1,\dots,J}\Re q_j<0$. Now select $t_0\geqslant0$ such that $e^{tq_0}\leqslant \frac{\epsilon}{2}$. Let $t\geqslant t_0$ and $\|x\|_m\leqslant1$. Compute
\begin{eqnarray*}
\|T(t)x\|_k &=& \max_{j=1,\dots,J}a_{j,k}|x_j|e^{t\Re q_j} + \sup_{j>J}a_{j,k}|x_j|e^{t\Re q_j}\\
&\leqslant& e^{tq_0}+\sup_{j>J}a_{j,m}\frac{\epsilon}{2}|x_j|\leqslant\frac{\epsilon}{2} + \frac{\epsilon}{2}\|x\|_m\leqslant\epsilon
\end{eqnarray*}
which shows $\sup_{\|x\|_m\leqslant1}\|T(t)x\|_k\leqslant\epsilon$ for $t\geqslant t_0$ as needed. 
\smallskip
\\(ii)$\Rightarrow$(i)\textquotedblright{}: Assume that (i) does not hold. Then we have
$$
\exists\:k\;\forall\:m\;\exists\:C>0,\;(j_n)_{n\in\mathbb{N}}\subseteq\mathbb{N}\text{ with } j_n\stackrel{\scriptscriptstyle n\rightarrow\infty}{\longrightarrow}\infty\colon\:\frac{a_{j_n,k}}{a_{j_n,m}}\geqslant C.
$$
Select $k$ as in above. Let $m$ be arbitrary. Choose $C>0$ and $(j_n)_{n\in\mathbb{N}}$ as above and define $(x^{(n)})_{n\in\mathbb{N}}$ via
$$
x^{(n)}=(x_j^{(n)})_{j\in\mathbb{N}},\;\;x_j^{(n)}=\begin{cases}
\;1/a_{j,m} & \text{ if } j=j_n,\\
\;\hspace{10pt}0 & \text{ otherwise.}\\
\end{cases}
$$
By construction, $x^{(n)}\in\varphi\subseteq c_0(A)$ and $\|x^{(n)}\|_m=\sup_{j\in\mathbb{N}}a_{j,m}|x_j^{(n)}|=a_{j_n,m}\cdot1/a_{j_n,m}=1$ for every $n\in\mathbb{N}$. For $n\in\mathbb{N}$ put $t_n:=1/|\Re q_{j_n}|$. Since $\lim_{j\rightarrow\infty}\Re q_j=0$ we have $\lim_{n\rightarrow\infty}t_n=\infty$. For arbitrary $n$ we estimate
$$
\|T(t_n)x^{(n)}\|_k=\sup_{j\in\mathbb{N}}e^{t_n \Re q_j}\,a_{j,k}|x_j^{(n)}|=e^{1/|\Re q_{j_n}|\cdot\Re q_{j_n}}\,a_{j_n,k}\big|1/a_{j_n,m}\big|\geqslant\frac{C}{e}
$$
which yields $\sup_{\|x\|_m\leqslant1}\|T(t_n)x\|_k\geqslant C/e>0$ for all $n\in\mathbb{N}$. Contradiction.
\end{example}

The assumption of the next example is always satisfied if the K\"othe matrix $A$ satisfies (S).

\begin{example}\label{MULT-7} Assume that $A$ satisfies condition (M)
$$
\forall\:I\subseteq\mathbb{N},\,n\in\mathbb{N}\;\exists\:k\in\mathbb{N}\colon \inf_{j\in J}\frac{a_{j,n}}{a_{j,k}}=0,
$$
cf.~\cite[4.1]{BMS1982a}. Then the multiplication semigroup is uniformly stable: By Example \ref{MULT-3}, $\T$ is strongly stable. By the Dieudonn\'{e}-Gomes theorem, see \cite[27.9]{MeiseVogtEnglisch}, we have  that $c_0(A)=\lambda^{\infty}(A)$ is a Montel space. Whence the conclusion follows from Theorem \ref{MAIN-4}.
\end{example}

Our last example is a combination of Example \ref{MULT-6} and Example \ref{MULT-7}.

\begin{example}\label{MULT-8} Let $A$ be a K\"othe matrix which satisfies (M) but not (S), see \cite[27.21]{MeiseVogtEnglisch} for a concrete example, let $\Re q_j<0$ and $\lim_{j\rightarrow\infty}\Re q_j=0$. Then the multiplication semigroup $\T$ is uniformly stable by Example \ref{MULT-7}. On the other hand, condition (ii) of Example \ref{MULT-6} does not hold. The latter firstly implies that this condition is indeed strictly stronger than uniform stability, see Remark \ref{REM-6}. Secondly, $\T$ is not super polynomially stable, since Theorem \ref{MAIN-2}(iii) would imply Example \ref{MULT-6}(ii).
\end{example}

To conclude this example section let us review the scheme of properties from Theorem \ref{MAIN-0} and indicate at which positions our counterexamples from above are located.
\begin{center}
\includegraphics[width=12cm]{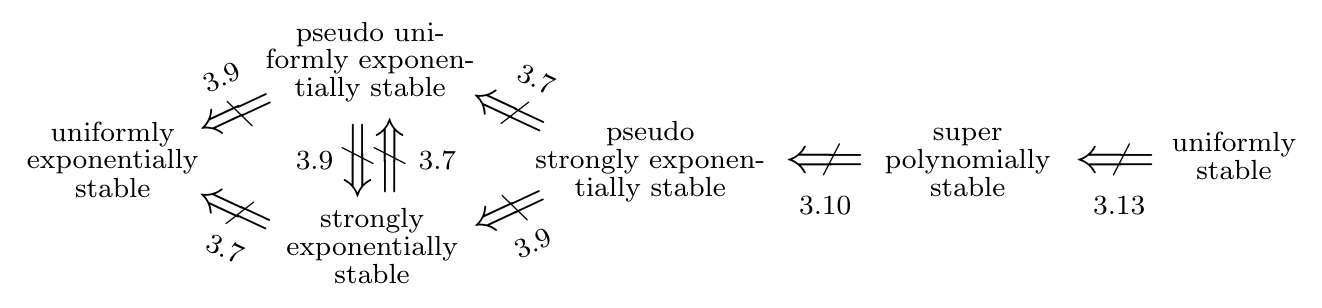}
\end{center}

%%%%%%%%%%%%%%%%%%%%%%%%%%%%%%%%%%%%%%%%%%%%%%%%%%%%%%%%%%%%%%%%%%%%%%%%%%%%%%%%%%%%%%%
%                                                                                     %
%  4 Proofs                                                                           %
%                                                                                     %
%%%%%%%%%%%%%%%%%%%%%%%%%%%%%%%%%%%%%%%%%%%%%%%%%%%%%%%%%%%%%%%%%%%%%%%%%%%%%%%%%%%%%%%
\section{Proofs}\label{Proofs}

\subsection{Preparation}

In this section we consider families $\T\subseteq L(Y,X)$, where $X$ and $Y$ are locally convex spaces and $X$ is allowed to be non-Hausdorff. We refer to Floret, Wloka \cite[\S 23.1]{FloretWloka} for useful hints concerning the treatise of non-Hausdorff locally convex spaces and to Bourbaki \cite{Bourbaki} for non-Hausdorff versions of classical theorems. Possibly non-Hausdorff locally convex topologies $\tau_s$ resp.~$\tau_b$ are defined as at the beginning of Section \ref{NotationAndMainResults}, write $L_s(Y,X)$ resp.~$L_b(Y,X)$ for $L(Y,X)$ furnished with these topologies.
\smallskip
\\The first result below will yield the equivalences in Theorem's \ref{MAIN-1} and \ref{MAIN-2} for barrelled spaces by applying the latter to the sets $\mathcal{H}=\{t\mapsto e^{\omega t}\:;\:\omega>0\}$ resp.~$\mathcal{H}=\{t\mapsto t^{\alpha}\:;\:\alpha>1\}$. 

\begin{proposition}\label{PREP-1} Let $Y$ be barrelled, $X$ be a (not necessarily Hausdorff) locally convex space, let $\T\subset L(Y,X)$ be a locally equicontinuous subset and let $\mathcal{H}$ be a set of strictly positive functions $[0,\infty)\:\rightarrow\mathbb{R}$ which are bounded on compact intervals. We assume that for any $h\in\mathcal{H}$ there is $h'\in\mathcal{H}$ such that $h'/h$ vanishes at $\infty$. The following are equivalent.
\begin{itemize}
\item[(i)] $\exists\:h\in\mathcal{H}\;\forall\:q\in\Gamma_X\;\exists\:p\in\Gamma_Y\colon\lim_{t\rightarrow\infty}\sup_{p(y)\leqslant1}q(h(t)T(t)y)=0$.\vspace{3pt}
\item[(ii)] $\exists\:h\in\mathcal{H}\colon \lim_{t\rightarrow\infty}h(t)T(t)=0$ in $L_b(Y,X)$.\vspace{3pt}
\item[(iii)] $\exists\:h\in\mathcal{H}\colon \lim_{t\rightarrow\infty}h(t)T(t)=0$ in $L_s(Y,X)$.\vspace{3pt}
\item[(iv)] $\exists\:h\in\mathcal{H}\;\forall\:q\in\Gamma_X\;\exists\:p\in\Gamma_Y,\,M\geqslant1\;\forall\:t\geqslant0,\,y\in Y\colon q(h(t)T(t)y)\leqslant M p(y)$.\vspace{3pt}
\item[(v)] $\exists\:h\in\mathcal{H}\colon\{h(t)T(t)\:;\:t\geqslant0\}\subseteq L_b(Y,X)$ is bounded.
\end{itemize}
\end{proposition}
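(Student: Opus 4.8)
The plan is to establish the single cycle of implications (i)$\Rightarrow$(ii)$\Rightarrow$(iii)$\Rightarrow$(iv)$\Rightarrow$(v)$\Rightarrow$(i). The two genuinely substantial ingredients are the uniform boundedness principle for barrelled spaces (a pointwise bounded family of continuous linear operators on a barrelled space is equicontinuous) and the speed-up property assumed for $\mathcal{H}$, namely that every $h\in\mathcal{H}$ admits $h'\in\mathcal{H}$ with $h'/h$ vanishing at $\infty$. Everything else reduces to elementary scaling between the seminorm balls $\{\,p(y)\leqslant1\,\}$ and the bounded sets $B\in\mathcal{B}_Y$, using that each $h$ is strictly positive so that $h(t)$ and quotients $h'(t)/h(t)$ pull out of seminorms as nonnegative scalars.

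The routine implications I would dispatch first. For (i)$\Rightarrow$(ii): given $B\in\mathcal{B}_Y$ and $q\in\Gamma_X$, pick $p$ as in (i) and set $C=\sup_{y\in B}p(y)<\infty$, finite since $B$ is bounded and $p$ continuous; homogeneity then gives $\sup_{y\in B}q(h(t)T(t)y)\leqslant C\sup_{p(z)\leqslant1}q(h(t)T(t)z)\to0$, which is convergence in $L_b(Y,X)$. The implication (ii)$\Rightarrow$(iii) is immediate because singletons are bounded. For (iv)$\Rightarrow$(v): choosing $p,M$ as in (iv), any $B\in\mathcal{B}_Y$ yields $\sup_{t\geqslant0}\sup_{y\in B}q(h(t)T(t)y)\leqslant M\sup_{y\in B}p(y)<\infty$, i.e.\ the family is bounded in $L_b(Y,X)$. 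The only point needing care is the case $p(y)=0$, handled by applying homogeneity to $\lambda y$ and letting $\lambda\to\infty$.

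The step (iii)$\Rightarrow$(iv) is the first place barrelledness enters. I would verify that $\{h(t)T(t):t\geqslant0\}$ is pointwise bounded and then invoke the uniform boundedness principle. For large $t$ the bound on $q(h(t)T(t)y)$ is automatic from the convergence in (iii); on a compact interval $[0,t_0]$ it follows from local equicontinuity of $\T$ (so $q(T(t)y)\leqslant p(y)$ uniformly for $t\in[0,t_0]$) together with the boundedness of $h$ on compacta. This is exactly where those two hypotheses on $\T$ and $\mathcal{H}$ are used. Barrelledness of $Y$ then upgrades pointwise boundedness to equicontinuity of $\{h(t)T(t)\}$, which is precisely (iv) with $M=1$.

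The step (v)$\Rightarrow$(i) is the one I expect to be the main obstacle, since it must convert a static boundedness statement into a dynamic convergence statement, and it is the reason the speed-up property of $\mathcal{H}$ is indispensable. Boundedness in $L_b(Y,X)$ implies pointwise boundedness, so the uniform boundedness principle again yields equicontinuity: for each $q\in\Gamma_X$ there is $p\in\Gamma_Y$ with $q(h(t)T(t)y)\leqslant p(y)$ for all $t\geqslant0$ and $y\in Y$. Now choose $h'\in\mathcal{H}$ with $h'/h\to0$ at infinity. Pulling out the positive scalar $h'(t)/h(t)$ gives
$$
\sup_{p(y)\leqslant1}q(h'(t)T(t)y)=\frac{h'(t)}{h(t)}\sup_{p(y)\leqslant1}q(h(t)T(t)y)\leqslant\frac{h'(t)}{h(t)}\,\sup_{p(y)\leqslant1}p(y)\leqslant\frac{h'(t)}{h(t)}\xrightarrow{\;t\to\infty\;}0,
$$
which is exactly statement (i) for the function $h'$. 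This closes the cycle, and the equivalence of (i)--(v) follows.
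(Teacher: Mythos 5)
Your proposal is correct and takes essentially the same route as the paper's proof: the same scaling arguments for the easy implications, the same appeal to the uniform boundedness principle for the barrelled space $Y$ (combined with local equicontinuity of $\T$ and boundedness of $h$ on compact intervals) to get equicontinuity, and the same use of the speed-up hypothesis on $\mathcal{H}$ to turn the uniform bound into decay. The only cosmetic difference is that you close the cycle with a direct (v)$\Rightarrow$(i), whereas the paper factors this as (v)$\Rightarrow$(iv)$\Rightarrow$(i); the content is identical.
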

\begin{proof} \textquotedblleft{}(i)$\Rightarrow$(ii)\textquotedblright{}: Select $h\in\mathcal{H}$ as in (i). Let $q\in\Gamma_X$ and $B\in\mathcal{B}_Y$ be given. Select $p\in\Gamma_Y$ as in (i) with respect to $h$ and $q$. Since $B$ is bounded there is $C\geqslant0$ such that $B\subseteq\{y\in Y\:;\:p(y)\leqslant C\}$ holds. Thus, $\sup_{y\in B}q(h(t)T(t)y)\leqslant\sup_{p(y)\leqslant C}q(h(t)T(t)y)\leqslant C \sup_{p(y)\leqslant 1}q(h(t)T(t)y)$ implies (ii) taking into account the definition of $L_b(Y,X)$.
\smallskip
\\\textquotedblleft{}(ii)$\Rightarrow$(iii)\textquotedblright{}: Trivial.
\smallskip
\\\textquotedblleft{}(iii)$\Rightarrow$(iv)\textquotedblright{}: Select $h$ as in (iii). We claim that $\{h(t)T(t)\:;\:t\geqslant0\}$ is bounded in $L_s(Y,X)$. Let $y\in Y$ and $q\in\Gamma_X$ be given. By (iii) there exists $t_0\geqslant0$ such that $q(h(t)T(t)y)\leqslant1$ holds for all $t\geqslant t_0$. Since $\T$ is locally equicontinuous and $h$ is bounded on compact intervals there exists $C\geqslant0$ such that $q(h(t)T(t)y)\leqslant C$ for all $t\in[0,t_0]$. Thus, $q_y(h(t)T(t))=q(h(t)T(t)y)\leqslant C+1$ holds for all $t\geqslant0$ and the claim is established. Since $Y$ is barrelled, \cite[Chapter III, \S 3.6, Th\'{e}or\`{e}me 2]{Bourbaki} implies that $\{h(t)T(t)\:;\:t\geqslant0\}$ is equicontinuous in $L(Y,X)$.  Writing down the latter explicitly shows (iv).
\smallskip
\\\textquotedblleft{}(iv)$\Rightarrow$(v)\textquotedblright{}: Trivial.
\smallskip
\\\textquotedblleft{}(v)$\Rightarrow$(iv)\textquotedblright{}: Since $Y$ is barrelled, we may use again \cite[Chapter III, \S 3.6, Th\'{e}or\`{e}me 2]{Bourbaki} to obtain from (v) that there is $h\in\mathcal{H}$ such that $\{h(t)T(t)\:;\:t\geqslant0\}\subseteq L(Y,X)$ is equicontinuous, i.e., (iv) holds.
\smallskip
\\\textquotedblleft{}(iv)$\Rightarrow$(i)\textquotedblright{}: Select $h\in\mathcal{H}$ as in (iv). Select $h'\in\mathcal{H}$ such that $h'/h$ vanishes at $\infty$. Let $q\in\Gamma_X$ be given. Select $p\in\Gamma_Y$ and $M\geqslant0$ according to (iv) w.r.t.~$h$ and $q$. Let $t\geqslant0$ and $y\in Y$ be given. By (iv) we have $q(h(t)T(t)y)\leqslant M p(y)$ and obtain $q(h'(t)T(t)y)\leqslant Mp(y)h'(t)/h(t)$ for $t$ large enough. Finally we get $\sup_{p(y)\leqslant1}q(h'(t)T(t)y)\leqslant Mh'(t)/h(t)\rightarrow0$ for $t\rightarrow\infty$ which shows (i).
\end{proof}

For the next result of this section we need a version of the classical principle of condensation of singularities, see Lemma \ref{PREP-2} below. We give a full proof; however we note that the latter is just a patchwork of well-known arguments in the framework of the uniform boundedness principle, cf.~\cite[p.~63f]{Yosida}, Bourbaki \cite[Chapter III, \S 3, Exercise 15.b]{Bourbaki} and \cite[1.2.18]{BPC}.

\begin{lemma}\label{PREP-2}(Principle of condensation of singularities) Let $Y$ be a Baire space and $X$ be a (not necessarily Hausdorff) locally convex space. For $\nu\in\mathbb{N}$ and $\mu\in[0,\infty)$ let $T_{\nu,\mu}\in L(Y,X)$. The following are equivalent.
\begin{itemize}
\item[(i)] $\forall\:\nu\in\mathbb{N}\;\exists\:y\in Y\colon \{T_{\nu,\mu}y\:;\:\mu\geqslant0\}$ is unbounded in $X$.\vspace{3pt}
\item[(ii)] $\exists\:y\in Y\;\forall\:\nu\in\mathbb{N}\colon \{T_{\nu,\mu}y\:;\:\mu\geqslant0\}$ is unbounded in $X$.
\end{itemize}
\end{lemma}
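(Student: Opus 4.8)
The implication \textquotedblleft(ii)$\Rightarrow$(i)\textquotedblright{} is immediate, since the single point furnished by (ii) serves simultaneously for every $\nu$. The substance of the lemma is the converse, and the plan is to run a Banach--Steinhaus dichotomy separately for each fixed $\nu$ and then to glue the resulting singular sets together by a Baire category argument. First I would reformulate unboundedness through seminorms: a subset of $X$ is bounded precisely when it is bounded with respect to every $q\in\Gamma_X$, so for fixed $\nu$ the set of \emph{regular} points, on which $\{T_{\nu,\mu}y\:;\:\mu\geqslant0\}$ is bounded, is contained for each $q$ in $E_{\nu,q}=\{y\in Y\:;\:\sup_{\mu\geqslant0}q(T_{\nu,\mu}y)<\infty\}$. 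Writing $E_{\nu,q}=\bigcup_{n\in\mathbb{N}}F_{\nu,n,q}$ with $F_{\nu,n,q}=\bigcap_{\mu\geqslant0}\{y\:;\:q(T_{\nu,\mu}y)\leqslant n\}$ exhibits each $F_{\nu,n,q}$ as an intersection of closed sets, since the maps $y\mapsto q(T_{\nu,\mu}y)$ are continuous; hence each $F_{\nu,n,q}$ is closed.

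The key step is the dichotomy: for each $\nu$ and $q$, either $E_{\nu,q}$ is meager or $E_{\nu,q}=Y$. Indeed, if $E_{\nu,q}$ is non-meager then, being a countable union of the closed sets $F_{\nu,n,q}$, these cannot all be nowhere dense, so some $F_{\nu,n_0,q}$ has nonempty interior. Choosing an interior point $y_0$ and a zero-neighbourhood $U$ with $y_0+U\subseteq F_{\nu,n_0,q}$, linearity of the $T_{\nu,\mu}$ gives $q(T_{\nu,\mu}u)\leqslant q(T_{\nu,\mu}(y_0+u))+q(T_{\nu,\mu}y_0)\leqslant2n_0$ for all $u\in U$ and all $\mu\geqslant0$. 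As $U$ is absorbing, every $y\in Y$ lies in some $\lambda U$, whence $\sup_{\mu\geqslant0}q(T_{\nu,\mu}y)\leqslant2\lambda n_0<\infty$ and therefore $E_{\nu,q}=Y$. I expect this Banach--Steinhaus step to be the main obstacle, largely in the book-keeping: one must verify that the sublevel sets are genuinely closed in the possibly non-Hausdorff space $X$ and that the absorbing manipulations are performed with the continuous seminorms of $\Gamma_X$ rather than with a norm.

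Finally I would assemble the pieces. Fix $\nu$. By (i) there is a point $y$ with $\{T_{\nu,\mu}y\:;\:\mu\geqslant0\}$ unbounded, so some $q_\nu\in\Gamma_X$ satisfies $\sup_{\mu\geqslant0}q_\nu(T_{\nu,\mu}y)=\infty$, that is $y\notin E_{\nu,q_\nu}$ and thus $E_{\nu,q_\nu}\neq Y$. By the dichotomy $E_{\nu,q_\nu}$ is therefore meager, and on its complement the family $\{T_{\nu,\mu}y\:;\:\mu\geqslant0\}$ is unbounded. Since $\bigcup_{\nu\in\mathbb{N}}E_{\nu,q_\nu}$ is a countable union of meager sets it is meager, so $\bigcap_{\nu\in\mathbb{N}}(Y\setminus E_{\nu,q_\nu})$ is residual. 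As $Y$ is a Baire space it is non-meager in itself, whence this intersection is nonempty; any point $y$ in it satisfies $\sup_{\mu\geqslant0}q_\nu(T_{\nu,\mu}y)=\infty$ for every $\nu$, so $\{T_{\nu,\mu}y\:;\:\mu\geqslant0\}$ is unbounded for all $\nu$ simultaneously, which is exactly (ii).
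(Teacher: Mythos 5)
Your proof is correct and follows essentially the same route as the paper: for each fixed $\nu$ a Banach--Steinhaus argument (closed sublevel sets, a set of second category forcing one of them to have interior, then translation and absorption) shows the set of points with bounded orbit is meager, and Baireness of $Y$ then yields a common singular point for all $\nu$. The only difference is organizational --- you phrase the key step as a clean meager-or-everything dichotomy for the seminorm-wise sets $E_{\nu,q}$, whereas the paper runs the same estimate as a contradiction against a chosen witness $y_\nu$ --- which if anything streamlines the paper's bookkeeping.
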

\begin{proof} \textquotedblleft{}(i)$\Rightarrow$(ii)\textquotedblright{}: We show that $B=\big\{y\in Y\:;\:\forall\:\nu\in\mathbb{N}\colon\{T_{\nu,\mu}y\:;\:\mu\geqslant0\}\subseteq X \text{ is unbounded}\big\}$ is of second category in $Y$. We fix $\nu\geqslant0$ and claim that $B_\nu=\{y\in Y\:;\:\{T_{\nu,\mu}y\:;\:\mu\geqslant0\}\subseteq X \text{ is bounded}\}$ is of first category. Select $y_\nu$ such that $\{T_{\nu,\mu}y_\nu\:;\:\mu\geqslant0\}\subseteq X$ is unbounded. That is, there exists $q\in\Gamma_X$ such that $\sup_{\mu\geqslant0}q(T_{\nu,\mu}y_\nu)=\infty$. Assume now that $B_\nu$ is of second category and fix $\epsilon>0$. For $k\geqslant1$ define $A_k=\{y\in Y\:;\:\sup_{\mu\geqslant0}q(k^{-1}T_{\nu,\mu}y)\leqslant\epsilon\}$. Then, $B_\nu\subseteq \cup_{k\geqslant1}A_k$: Let $y\in B_\nu$, that is  $\{T_{\nu,\mu}y\:;\:\mu\geqslant0\}\subseteq X$ is bounded and thus there exists $K\geqslant0$ such that $\sup_{\mu\geqslant0}q(T_{\nu,\mu}y)\leqslant K$ holds. Therefore there is $k\geqslant1$ 
such that $\sup_{\mu\geqslant0}q(k^{-1}T_{\nu,\mu}y)\leqslant K/k\leqslant\epsilon$ which means that $y\in A_k$ holds. As $B_\nu$ is of second category the same is true for $\cup_{k\geqslant1}A_k$ whence there exists $k_0$ such that the interior of $A_{k_0}$ is non-empty. Thus, there exists $y_0\in Y$, $p\in\Gamma_Y$ and $\delta>0$ such that $p(y-y_0)\leqslant\delta$ for $y\in Y$ implies that $\sup_{\mu\geqslant0}q(k_0^{-1}T_{\nu,\mu}y)\leqslant\epsilon$ holds. Define $y=y_\nu\delta/p(y_\nu)+y_0$, i.e., $p(y-y_0)\leqslant\delta$ and thus $\sup_{\mu\geqslant0}q(k_0^{-1}T_{\nu,\mu}y_\nu)\leqslant[q(k_0^{-1}T_{\nu,\mu}y)+q(k_0^{-1}T_{\nu,\mu}y_0)] p(y_\nu)/\delta\leqslant 2\epsilon p(y_\nu)/\delta<\infty$, if $p(y_\nu)>0$. Otherwise, put $y=\delta{}y_\nu+y_0$ and obtain $\sup_{\mu\geqslant0}q(k_0^{-1}T_{\nu,\mu}y_\nu)<\infty$ analogously. In both cases the finiteness contradicts the selection of $y_\nu$ which establishes the claim. Since $\nu$ was arbitrary, it follows that $\cup_{\nu\in\mathbb{N}}B_\nu=\{y 
\in Y\:;\:\exists\:\nu\in\mathbb{N}\colon \{T_{\nu,\mu}y\:;\:\mu\geqslant0\}\subseteq X \text{ is bounded}\}\subseteq Y$ is also of first category. Since $Y$ is of second category in itself, $Y\backslash\cup_{\nu\in\mathbb{N}}B_\nu=\{y\in Y\:;\:\forall\:\nu\in\mathbb{N}\colon \{T_{\nu,\mu}y\:;\:\mu\geqslant0\}\subseteq X \text{ is unbounded}\}=B$ cannot also be of first category.
\smallskip
\\\textquotedblleft{}(ii)$\Rightarrow$(i)\textquotedblright{}: Trivial.
\end{proof}

The next result is the key to prove the equivalences in Theorem's \ref{MAIN-1} and \ref{MAIN-2} for Mackey complete and for Baire spaces. The model cases for $\mathcal{H}$ are again the sets $\{t\mapsto e^{\omega t}\:;\:\omega>0\}$ resp.~$\{t\mapsto t^{\alpha}\:;\:\alpha>1\}$.

\begin{proposition}\label{PREP-3} Let $Y$ be a Baire space and $X$ be a (not necessarily Hausdorff) locally convex space, $\T\subseteq L(Y,X)$ be a locally equicontinuous subset and $\mathcal{H}$ be a set of strictly positive functions $[0,\infty)\rightarrow\mathbb{R}$ which are bounded on compact intervals. We assume that there exists $(h_n)_{n\in\mathbb{N}}\subseteq\mathcal{H}$ such that for any $h\in\mathcal{H}$ there is $n\in\mathbb{N}$ such that $h_n/h$ vanishes at $\infty$. The following are equivalent.
\begin{itemize}
\item[(i)] $\forall\:y\in Y\;\exists\:h\in\mathcal{H}\colon\lim_{t\rightarrow\infty}h(t)T(t)y=0$.\vspace{3pt}
\item[(ii)] $\exists\:h\in\mathcal{H}\;\forall\:y\in Y\colon\lim_{t\rightarrow\infty}h(t)T(t)y=0$.
\end{itemize}
\end{proposition}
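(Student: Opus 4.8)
The plan is to prove the contrapositive of the nontrivial implication \textquotedblleft(i)$\Rightarrow$(ii)\textquotedblright{} and to feed it into the condensation principle of Lemma \ref{PREP-2}; the implication \textquotedblleft(ii)$\Rightarrow$(i)\textquotedblright{} is immediate. So I would assume that (ii) fails, i.e.\ that for every $h\in\mathcal{H}$ there is some $y\in Y$ with $h(t)T(t)y\not\rightarrow0$, and aim to produce a single $y$ that is bad for \emph{all} members of the reference sequence $(h_n)_{n\in\mathbb{N}}$ simultaneously. Once such a $y$ is found, the negation of (i) follows easily: given an arbitrary $h\in\mathcal{H}$, the hypothesis on $\mathcal{H}$ supplies an index $n$ with $h_n/h\rightarrow0$ at infinity, and if $h(t)T(t)y\rightarrow0$ held then $q(h_n(t)T(t)y)=(h_n(t)/h(t))\,q(h(t)T(t)y)\rightarrow0$ for every $q\in\Gamma_X$, forcing $h_n(t)T(t)y\rightarrow0$, contrary to the choice of $y$.

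To obtain the single bad vector I would set $T_{n,\mu}:=h_n(\mu)T(\mu)\in L(Y,X)$ and verify hypothesis (i) of Lemma \ref{PREP-2}, namely that for each $n$ there is $y$ with $\{T_{n,\mu}y\:;\:\mu\geqslant0\}$ unbounded. This is exactly where the main difficulty sits: the failure of (ii) only yields that some orbit does \emph{not converge to zero}, which is strictly weaker than being \emph{unbounded}. The device that bridges this gap is an amplification by a faster weight. Applying the hypothesis on $\mathcal{H}$ to $h:=h_n$ yields an index $m$ with $h_m/h_n\rightarrow0$, hence $h_n/h_m\rightarrow\infty$; applying the failure of (ii) to $h:=h_m$ gives $y=y_n$ and, by non-convergence, a seminorm $q\in\Gamma_X$, an $\epsilon>0$ and times $t_k\rightarrow\infty$ with $q(h_m(t_k)T(t_k)y_n)\geqslant\epsilon$. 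Then
$$
q(T_{n,t_k}y_n)=\frac{h_n(t_k)}{h_m(t_k)}\,q(h_m(t_k)T(t_k)y_n)\geqslant\frac{h_n(t_k)}{h_m(t_k)}\,\epsilon\longrightarrow\infty,
$$
so $\{T_{n,\mu}y_n\:;\:\mu\geqslant0\}$ is unbounded, as required.

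Lemma \ref{PREP-2} then furnishes one $y$ for which $\{h_n(\mu)T(\mu)y\:;\:\mu\geqslant0\}$ is unbounded for every $n$. The last point to check is that unboundedness of this orbit really does give $h_n(t)T(t)y\not\rightarrow0$, and this is where local equicontinuity of $\T$ enters: if $h_n(t)T(t)y\rightarrow0$ as $t\rightarrow\infty$, then the orbit is bounded for large $t$, while on each compact interval $[0,t_0]$ the equicontinuity of $\{T(t)\:;\:0\leqslant t\leqslant t_0\}$ together with the boundedness of $h_n$ on $[0,t_0]$ bounds $t\mapsto h_n(t)T(t)y$ there as well; so the whole orbit would be bounded, a contradiction. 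With $h_n(t)T(t)y\not\rightarrow0$ for all $n$ in hand, the reduction of the first paragraph yields the negation of (i) and closes the argument. I expect the amplification step to be the only genuinely delicate point; the remaining steps are routine once the operators $T_{n,\mu}$ and the pairing of the indices $n$ and $m$ are set up correctly.
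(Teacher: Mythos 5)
Your proposal is correct and follows essentially the same route as the paper: the same amplification trick (pairing $h_n$ with an $h_m$ satisfying $h_m/h_n\rightarrow0$ to upgrade non-convergence into unboundedness), the same application of Lemma \ref{PREP-2} to the operators $h_\nu(\mu)T(\mu)$, and the same use of local equicontinuity together with boundedness of $h_n$ on compact intervals to handle the initial segment of the orbit. The only difference is organizational: you factor the final contradiction through the intermediate claim \textquotedblleft{}unbounded orbit implies non-convergence\textquotedblright{}, whereas the paper bounds $q(h_n(t)T(t)y)$ directly by $q(h(t)T(t)y)$ on the tail; these are the same estimate.
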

\begin{proof} \textquotedblleft{}(i)$\Rightarrow$(ii)\textquotedblright{}: Assume that (ii) is not true. Then we have
$$
\forall\:h\in\mathcal{H}\;\exists\:q\in\Gamma_Y,\,y\in Y,\,c>0\,(t_k)_{k\in\mathbb{N}}\subseteq[0,\infty),\,t_k\nearrow\infty\;\forall\:k\in\mathbb{N}\colon
$$
$$
q(h(t_k)T(t_k)y)\geqslant c.
$$
For given $n\in\mathbb{N}$ we consider $h_n\in\mathcal{H}$ and select $m$ such that $h_m/h_n$ vanishes at infinity. We choose $q$, $y$, $c$ and $(t_k)_{k\in\mathbb{N}}$ as in the condition above w.r.t.~$h_m$. We obtain
$$
\sup_{t\geqslant0}q(h_n(t)T(t)y)\geqslant\sup_{k\in\mathbb{N}}q(h_m(t_k)T(t_k)y)\frac{h_n(t_k)}{h_m(t_k)}\geqslant c\,\sup_{k\in\mathbb{N}}\frac{h_n(t_k)}{h_m(t_k)}=\infty.
$$
Consequently, we have shown that for each $n\in\mathbb{N}$ there exists $y\in Y$ such that $\{h_n(t)T(t)\:;\:t\geqslant0\}$ is unbounded in $X$. Now we apply Lemma \ref{PREP-2} with $T_{\nu,\mu}=h_{\nu}T(\mu)$ and get that there exists $y\in Y$ such that for all $n\in\mathbb{N}$ the set $\{h_n(t)T(t)\:;\:t\geqslant0\}$ is unbounded in $X$.
\smallskip
\\For $y$ as before we select $h$ as in (i). Then we select $n$ such that $h_n/h$ vanishes at infinity, i.e., there exists $t_1\geqslant0$ such that $h_n(t)\leqslant h(t)$ holds for all $t\geqslant t_1$. We remember that by the previous paragraph $\{h_n(t)T(t)y\:;\:t\geqslant0\}$ is unbounded. Let now $q\in\Gamma_Y$ be given. By (i) there exists $t_2\geqslant0$ such that $q(h(t)T(t)y)\leqslant1$ holds for all $t\geqslant t_2$. We put $t_0=\max(t_1,t_2)$. Then there exists $C\geqslant0$ such that $q(h_n(t)T(t)y)\leqslant C$ holds for all $t\in[0,t_0]$. Therefore, 
\begin{eqnarray*}
\sup_{t\geqslant0}q(h_n(t)T(t)y)&\leqslant& C+\sup_{t\in[t_0,\infty)}q(h_n(t)T(t)y)\\
&\leqslant& C + \sup_{t\in[t_0,\infty)}q(h(t)T(t)y)\leqslant C+1
\end{eqnarray*}
and $\{h_n(t)T(t)y\:;\:t\geqslant0\}$ is bounded as $q\in\Gamma_Y$ was arbitrary, which is a contradiction.
\smallskip
\\\textquotedblleft{}(ii)$\Rightarrow$(i)\textquotedblright{}: Trivial.
\end{proof}

\subsection{The Main Results}

Below we give consecutively the proofs of our five main theorems. In order to avoid forward references we will not stick to the ordering in Section \ref{NotationAndMainResults}.

\begin{proof}\textit{(of Theorem \ref{MAIN-1})} 1.~Let $X$ be barrelled. In order to get the equivalence of (i)--(iii) it suffices to apply Proposition \ref{PREP-1} with $\mathcal{H}=\{t\mapsto e^{\omega t}\:;\:\omega>0\}$ and to evaluate its first three conditions.
\medskip
\\\label{PRF-MAIN-1.2}2.~Let $X$ be Mackey complete.
\smallskip
\\\textquotedblleft{}(iv)$\Rightarrow$(v)\textquotedblright{}: Let $B\in\mathcal{B}_X$ be given. Since $X$ is Mackey-complete, $B$ is contained in a Banach disk $B_0$. Denote by $X_{B_0}$ the associated Banach space, i.e., $X_{B_0}=\lin B_0$ endowed with the Minkowski functional $\|\cdot\|_{B_0}$ of $B_0$ as norm. For $t\geqslant0$ we consider the restriction $T(t)|_{X_{B_0}}\colon X_{B_0}\rightarrow X$ and we denote it also by $T(t)$. Since the inclusion $X_{B_0}\rightarrow X$ is continuous \cite[8.4.D]{Jarchow}, it follows that $\T\subseteq{}L(X_{B_0},X)$ holds. Moreover, it follows that the latter set is locally equicontinuous. We apply Proposition \ref{PREP-3} with $Y=X$ and $\mathcal{H}=\{t\mapsto e^{\omega t}\:;\:\omega>0\}$. Thus, there exists $\omega>0$ such that $\lim_{t\rightarrow\infty}e^{\omega t}T(t)=0$ holds in $L_s(X_{B_0},X)$, i.e., Proposition \ref{PREP-1}(iii) is satisfied. We evaluate Proposition \ref{PREP-1}(ii) and obtain $\omega>0$ such that $\lim_{t\rightarrow\infty}e^{\omega t}T(t)$ 
holds in $L_b(X_{B_0},X)$. The latter implies that $q_B(e^{\omega t}T(t))=\sup_{x\in B}q(e^{\omega t}T(t)x)\leqslant\sup_{x\in B_0}q(e^{\omega t}T(t)x)=\sup_{\|x\|_{B_0}\leqslant1}q(e^{\omega t}T(t)x)$ tends to zero for $t\rightarrow\infty$.
\smallskip
\\\textquotedblleft{}(v)$\Rightarrow$(iv)\textquotedblright{}: Trivial.
\medskip
\\3.~Let $X$ be Baire.
\smallskip
\\\textquotedblleft{}(i)$\Rightarrow$(v)$\Rightarrow$(iv)\textquotedblright{}: Trivial.
\smallskip
\\\textquotedblleft{}(iv)$\Rightarrow$(iii)\textquotedblright{}. Apply Proposition \ref{PREP-3} with $Y=X$ and $\mathcal{H}$ as in the second part of this proof.
\smallskip
\\\textquotedblleft{}(iii)$\Rightarrow$(ii)$\Rightarrow$(i)\textquotedblright{}: Follows from 1.~since Baire spaces are barrelled.
\end{proof}

\begin{proof}\textit{(of Theorem \ref{MAIN-2})} In the sequel we use the abbreviations (iii$_{\forall}$) resp.~(iii$_{\exists}$) for the two conditions in (iii) and similar notation for the conditions in (iv) and (v).
\medskip
\\1.~\textquotedblleft{}(i)$\Rightarrow$(ii)$\Rightarrow$(iii$_{\exists}$)$\Rightarrow$(iv$_{\exists}$)\textquotedblright{} and \textquotedblleft{}(i)$\Rightarrow$(iii$_{\forall}$)$\Rightarrow$(iv$_{\forall}$)$\Rightarrow$(iv$_{\exists}$)\textquotedblright{}: Trivial.
\smallskip
\\\textquotedblleft{}(iv$_{\exists}$)$\Rightarrow$(v$_{\forall}$)\textquotedblright{}: Let $\alpha>0$ be as in (iv$_{\exists}$) and select $1<\alpha_0<\alpha$. Then for given $x\in X$ we have $\lim_{t\rightarrow\infty}(1+t)^{\alpha_0}T(t)x=0$ since $(1+t)^{\alpha_0}\leqslant{}t^{\alpha}$ holds if $t$ is suitable large. We apply Proposition \ref{PREP-1} with $Y=X$ and $\mathcal{H}=\{t\mapsto (1+t)^{\alpha}\:;\:\alpha>1\}$. By the above, Proposition \ref{PREP-1}(iii) holds and we evaluate Proposition \ref{PREP-1}(iv) to get
$$
\exists\:\alpha>1\;\forall\:q\in\Gamma_X\;\exists\:p\in\Gamma_X,\,N\geqslant1\;\forall\:t\geqslant0,\,x\in X\colon q((1+t)^{\alpha}T(t)x)\leqslant Np(x).
$$
Select $\alpha>1$ as above and let $\beta\geqslant1$ and $q\in\Gamma_X$ be given. Select $p$ and $N$ as above. Put $M=N^{\beta}/(\alpha\beta-1)$ and let $x\in X$ be given. Compute
\begin{eqnarray*}
\int_0^\infty q(T(t)x)^{\beta}dt
&\leqslant&
\int_0^\infty\Bigl(p(x)\frac{N}{(1+t)^{\alpha}}\Bigr)^{\beta}dt\\
&=& N^{\beta}\int_0^\infty (1+t)^{-\alpha\beta}dt\,p(x)^{\beta}\\
&=&N^{\beta}\lim_{r\rightarrow\infty}\frac{(1+t)^{1-\alpha\beta}}{1-\alpha\beta}\Bigr|_0^r\,p(x)^{\beta}\\
&=&
\frac{N^{\beta}}{1-\alpha\beta}\,(\lim_{r\rightarrow\infty} (1+r)^{1-\alpha\beta}-1)\,p(x)^{\beta}\\
&=& M p(x)^{\beta}<\infty,
\end{eqnarray*}
which shows (v$_{\forall}$) and even gives a more precise estimate of the integral, i.e.,
\begin{equation}\label{EQ0}
\forall\:\beta\geqslant1,\,q\in\Gamma_X\;\exists\:p\in\Gamma_X,\,M\geqslant1\;\forall\:x\in X\colon \int_0^\infty q(T(t)x)^{\beta}dt\leqslant Mp(x)^{\beta}.
\end{equation}
\textquotedblleft{}(v$_{\forall}$)$\Rightarrow$(v$_{\exists}$)\textquotedblright{}: Trivial.
\smallskip
\\\textquotedblleft{}(v$_{\exists}$)$\Rightarrow$(i)\textquotedblright{}: We select $\beta$ as in (v$_{\exists}$) and fix this number for the remainder of this part of the proof. For our first argument we need a vector valued space of Lebesgue integrable functions: Let $I=[0,\infty)$. Below all integrals are to be understood with respect to the Lebesgue measure on $I$. Define
$$
\mathcal{L}^\beta(I,X)=\big\{f\colon I\rightarrow X\:;\:f \text{ measurable and } q_{\beta}^{\star}(f)<\infty \text{ for all } q\in\Gamma_X\big\}
$$
where
$$
q_{\beta}^{\star}(f)=\Bigl(\int_Iq(f(t))^{\beta}dt\Bigr)^{1/\beta}.
$$
We consider the linear subspace $N_{\beta}=\cap_{q\in\Gamma_X}\ker q_{\beta}^{\star}$ of $\mathcal{L}^{\beta}(I,X)$ and define
$$
L^{\beta}(I,X)=\mathcal{L}^{\beta}(I,X)/N_{\beta}
$$
endowed with the Hausdorff topology given by the system $\Gamma_{L^{\beta}(I,X)}=\{q_{\beta}\:;\:q\in\Gamma_X\}$ where $q_{\beta}([f])=q_{\beta}^{\star}(f)$ for $[f]\in L^{\beta}(I,X)$. Now we define the operators $\mathcal{T}_n\colon X\rightarrow L^{\beta}(I,X)$ via $\mathcal{T}_nx=\chi_{[0,n]}(\cdot)T(\cdot)x$ for $n\in\mathbb{N}$, $x\in X$. Since $\T$ is strongly continuous, $t\mapsto \chi_{[0,n]}(t)T(t)x$ is measurable for $x\in X$ and $n\in\mathbb{N}$. Moreover, for $t\geqslant0$ we have $0\leqslant q(\chi_{[0,n]}(t)T(t)x)^{\beta}\leqslant q(T(t)x)^{\beta}$ and thus
$$
\int_Iq(\chi_{[0,n]}(t)T(t)x)^{\beta}dt\leqslant\int_0^{\infty}q(T(t)x)^{\beta}dt<\infty
$$
for every $q\in\Gamma_X$, $n\in\mathbb{N}$ and $x\in X$. Whence the operators $\mathcal{T}_n$ are well-defined and linear. In addition, the above estimate shows that $\sup_{n\in\mathbb{N}}q_{\beta}(\mathcal{T}_nx)<\infty$ holds for every $x\in X$, $q\in\Gamma_X$. By the uniform boundedness principle, see \cite[23.26]{MeiseVogtEnglisch}, we obtain that $\{\mathcal{T}_n\:;\:n\in\mathbb{N}\}$ is an equicontinuous subset of $L(X,L^{\beta}(I,X))$. That is, for a given $p\in\Gamma_X$ there exist $s\in\Gamma_X$ and $C\geqslant1$ such that
$$
\Bigl(\int_0^np(T(t)x)^{\beta}dt\Bigr)^{1/\beta}=\Bigl(\int_I p(\chi_{[0,n]}(t)T(t)x)^{\beta}dt\Bigr)^{1/\beta}=p_{\beta}(\mathcal{T}_nx) \leqslant C s(x)
$$
holds for all $n\in\mathbb{N}$ and all $x\in X$. Therefore, with $K=C^{\beta}$, we obtain
$$
\int_0^\infty p(T(t)x)^{\beta}dt \leqslant K s(x)^{\beta}
$$
for all $x\in X$. So far, we proved
\begin{equation}\label{EQ1}
\forall\:p\in\Gamma_X\;\exists\:s\in\Gamma_X,\,K\geqslant1\;\forall\:x\in X\colon\int_0^\infty p(T(t)x)^{\beta}dt\leqslant Ks(x)^{\beta}.
\end{equation}
Next, we show by induction that
\begin{equation}\label{EQ2}
\forall\:q\in\Gamma_X\;\exists\:r\in\Gamma_X,\,N\geqslant1\:\forall\:t\geqslant0,\,x\in X\colon t^nq(T(t)x)^{\beta}\leqslant Nr(x)^{\beta}
\end{equation}
holds for any $n\in\mathbb{N}$. We start with $n=0$. Let $q\in\Gamma_X$ be given. Select $p\in\Gamma_X$, $M\geqslant1$ and $\omega\in\mathbb{R}$ as in Definition \ref{DFN-SEMIGROUP}(iv), that is according to the exponential boundedness of $\T$; w.l.o.g.~we may assume that $\omega>0$ holds. Select $s\in\Gamma_X$ and $K\geqslant1$ as in \eqref{EQ1}. Select $r\in\Gamma_X$ and $C\geqslant1$ such that $\max(p,s)\leqslant Cr$ holds. Put $N=C^{\beta}\max(M^{\beta}e^{\beta\omega},\,M^{\beta}K\beta\omega/(1-e^{-\beta\omega}))$. Let $x\in X$ be given. For $t\in[0,1)$ we have $q(T(t)x)^{\beta}\leqslant M^{\beta}e^{\beta\omega t}p(x)^{\beta}\leqslant M^{\beta}e^{\beta\omega} p(x)^{\beta}\leqslant NC^{-\beta} p(x)^{\beta}\leqslant N r(x)^{\beta}$ by the estimate in Definition \ref{DFN-SEMIGROUP}(iv), the definition of $N$ and the selection of $r$. For $t\geqslant1$ we have
\begin{eqnarray*}
\frac{1-e^{-\beta\omega t}}{\beta\omega} q(T(t)x)^{\beta} & = & \int_0^t e^{-\beta\omega\tau}q(T(t)x)^{\beta}d\tau\\
&=& \int_0^t e^{-\beta\omega\tau}q(T(\tau)T(t-\tau)x)^{\beta}d\tau  \\
&\leqslant& M^{\beta}\int_0^t p(T(t-\tau)x)^{\beta}d\tau\\
&=& M^{\beta}\int_0^t p(T(\tau)x)^{\beta}d\tau\leqslant M^{\beta}Ks(x)^{\beta}
\end{eqnarray*}
by the estimate in Definition \ref{DFN-SEMIGROUP}(iv) and by \eqref{EQ1} and thus
$$
q(T(t)x)^{\beta}\leqslant M^{\beta}K\frac{\beta\omega}{1-e^{-\beta\omega t}}s(x)^{\beta}\leqslant NC^{-\beta}s(x)^{\beta}\leqslant Nr(x)^{\beta}
$$
which finishes the initial step of our induction. Let us now fix $n\in\mathbb{N}$ and assume that that \eqref{EQ2} holds. We claim that the latter condition then also is true for $n$ replaced with $n+1$ that is we claim
\begin{equation}\label{EQ3}
\forall\:q\in\Gamma_X\;\exists\:s\in\Gamma_X,\,M\geqslant1\:\forall\:t\geqslant0,\,x\in X\colon t^{n+1}q(T(t)x)^{\beta}\leqslant Ms(x)^{\beta}.
\end{equation}
Let $q\in\Gamma_X$ be given. Select $r\in\Gamma_X$ and $N\geqslant1$ as in \eqref{EQ2}. Select $s\in\Gamma_X$ and $K\geqslant1$ according to \eqref{EQ1} w.r.t.~$p=r$. Put $M=(n+1)NK$. Let $t\geqslant0$ and $x\in X$ be given. Compute
\begin{eqnarray*}
\frac{1}{n+1} t^{n+1}q(T(t)x)^\beta & = & \int_0^t (t-\tau)^{n}q(T(t)x)^{\beta}d\tau\\
&=& \int_0^t (t-\tau)^{n}q(T(t-\tau)T(\tau)x)^{\beta}d\tau\\
&\leqslant&  N\int_0^t r(T(\tau)x)^{\beta}d\tau\\
&\leqslant& N K s(x)^{\beta}.
\end{eqnarray*}
where we first use \eqref{EQ2} and then \eqref{EQ1} with $p=r$. Therefore,
$$
t^{n+1}q(T(t)x)^{\beta}\leqslant(n+1) N K s(x)^{\beta} = M s(x)^{\beta}
$$
holds, the induction step is finished and we have shown
$$
\forall\:n\in\mathbb{N},\,q\in\Gamma_X\;\exists\:p\in\Gamma_X,\,N\geqslant1\;\forall\:t\geqslant0,\,x\in X\colon  q(t^{n/\beta}T(t)x)\leqslant N^{1/\beta} p(x).
$$
Now we show (i), i.e., Definition \ref{DFN-STABILITY}(v). Let $\alpha>1$ be given and choose $\alpha_0>\alpha$. Select $n\in\mathbb{N}$ such that $n\geqslant\alpha_0\beta$. Let $q\in\Gamma_X$ and $B\in\mathcal{B}_X$ be given. Select $p\in\Gamma_X$ and $N\geqslant1$ as above w.r.t.~$n$ and $q$. Put $M=N^{1/\beta}$. For $x\in X$ and $t\geqslant1$ we get
$$
q(t^{\alpha_0}T(t)x)\leqslant q(t^{n/\beta}T(t)x)\leqslant N^{1/\beta} p(x)=M p(x)
$$
and thus $q(t^{\alpha}T(t)x)\leqslant{}t^{\alpha-\alpha_0}Mp(x)$. Since $K=\sup_{x\in B}p(x)<\infty$ we obtain $q_B(t^{\alpha}T(t))\leqslant M K t^{\alpha-\alpha_0}$ which implies $\lim_{t\rightarrow\infty}q_B(t^{\alpha}T(t))=0$ as desired.
\medskip
\\2.~Let $X$ be Mackey complete.
\smallskip
\\\textquotedblleft{}(vi)$\Rightarrow$(vii)\textquotedblright{}: Copy the proof of the second part of Theorem \ref{MAIN-1} verbatim, but replace $\mathcal{H}=\{t\mapsto e^{\omega{}t}\:;\:\omega>0\}$ in the latter proof with $\mathcal{H}=\{t\mapsto t^{\alpha}\:;\:\alpha>1\}$.
\smallskip
\\\textquotedblleft{}(vii)$\Rightarrow$(vi)\textquotedblright{}: Trivial.
\medskip
\\3.~Let $X$ be Baire. By 1., (i), (ii), (iii$_{\forall}$), (iii$_{\exists}$), (iv$_{\forall}$), (vi$_{\exists}$), (v$_{\forall}$) and (v$_{\exists}$) are equivalent.
\smallskip
\\\textquotedblleft{}(ii)$\Rightarrow$(vii)$\Rightarrow$(vi)\textquotedblright{}: Trivial.
\smallskip
\\\textquotedblleft{}(vi)$\Rightarrow$(iv$_{\exists}$)\textquotedblright{}: Follows from Proposition \ref{PREP-3} with $Y=X$ and $\mathcal{H}=\{t\mapsto t^{\alpha}\:;\:\alpha>1\}$.
\end{proof}

\begin{proof}\textit{(of Theorem \ref{MAIN-3})} \textquotedblleft{}(iv)$\Rightarrow$(iii)\textquotedblright{}: Trivial.
\smallskip
\\\textquotedblleft{}(iii)$\Rightarrow$(iv)\textquotedblright{}: Let $q\in\Gamma_X$ and $B\in\mathcal{B}_X$ be given. Select a Banach disk $B_0$ and denote by $X_{B_0}$ the associated Banach space. Denote by $X_q$ the seminormed space $(X,q)$. Consider for $t\geqslant0$ the maps $T(t)|_{X_{B_0}}\colon X_{B_0}\rightarrow X_q$ and denote them also with $T(t)$. Since the inclusion $X_{B_0}\rightarrow X$ and the identity $X\rightarrow X_q$ are both continuous it follows that $\T\subseteq L(X_{B_0},X_q)$ is locally equicontinuous. We apply Proposition \ref{PREP-3} with $\T$ as above and $\mathcal{H}=\{t\mapsto e^{\omega{}t}\:;\:\omega>0\}$. Thus, there is $\omega>0$ such that $\lim_{t\rightarrow\infty}e^{\omega{}t}T(t)=0$ holds in $L_s(X_{B_0},X_q)$, i.e., Proposition \ref{PREP-1}(iii) is satisfied. We evaluate Proposition \ref{PREP-1}(ii) and obtain $\omega>0$ such that $\lim_{t\rightarrow\infty}e^{\omega{}t}T(t)=0$ holds in $L_b(X_{B_0},X_q)$. The latter implies that $q_B(e^{\omega t}T(t))=\sup_{x\in B}q(
e^{\omega t}T(t)x)\leqslant\sup_{x\in B_0}q(e^{\omega t}T(t)x)=\sup_{\|x\|_{B_0}\leqslant1}q(e^{\omega t}T(t)x)$ tends to zero for $t\rightarrow\infty$.
\smallskip
\\\textquotedblleft{}(i)$\Rightarrow$(ii)\textquotedblright{}: Trivial.
\smallskip
\\\textquotedblleft{}(ii)$\Rightarrow$(i)\textquotedblright{}: Let $q\in\Gamma_X$ be given. Select $\omega>0$ as in (ii) and choose $0<\omega_0<\omega$. As above we consider the locally equicontinuous semigroup $\T\subseteq L(X_{B_0},X_q)$ where $B_0$ is a Banach disk containing $B$. By (ii), we know that $\lim_{t\rightarrow\infty}e^{\omega{}t}T(t)=0$ holds in $L_s(X_{B_0},X_q)$. An inspection of the proof of Proposition \ref{PREP-1} exhibits that for the implication \textquotedblleft{}(iii)$\Rightarrow$(ii)\textquotedblright{} of the latter exactly one passage from $h$ to $h'$ is necessary. Thus, in our situation it follows that $\lim_{t\rightarrow\infty}e^{\omega_0t}T(t)=0$ holds in $L_b(X_{B_0},X_q)$. With the same estimate as in the previous part it follows that $q_B(e^{\omega_0t}T(t))$ tends to zero for $t\rightarrow\infty$. 
\end{proof}

\begin{proof}\textit{(of Theorem \ref{MAIN-0})} In view of Definition \ref{DFN-STABILITY} and the scheme at the end of Section \ref{Examples} we only have to prove that in our setting of Theorem \ref{MAIN-0} a pseudo strongly exponentially stable semigroup is super polynomially stable: Let $X$ be Mackey complete and barrelled and $\T$ be exponentially bounded and pseudo strong\-ly exponentially stable. Thus, Theorem \ref{MAIN-3}(iv) holds. We claim that Theorem \ref{MAIN-2}(ii) is satisfied: Put $\alpha=2$ and let $B\in\mathcal{B}_X$ as well as $q\in\Gamma_X$ be given. Select $\omega>0$ as in Theorem \ref{MAIN-3}(iv) w.r.t.~$q$ and $B$. Compute $\lim_{t\rightarrow\infty}q_B(t^\alpha T(t))=\lim_{t\rightarrow\infty}t^2 e^{-\omega{}t} \lim_{t\rightarrow\infty}q_B(e^{\omega{}t}T(t))=0$.
\end{proof}

\begin{proof}\textit{(of Theorem \ref{MAIN-4})} \textquotedblleft{}(i)$\Rightarrow$(ii)\textquotedblright{}: Trivial.
\smallskip
\\\textquotedblleft{}(ii)$\Rightarrow$(i)\textquotedblright{}: Let $B\in\mathcal{B}_X$, $q\in\Gamma_X$ and $\epsilon>0$ be given. Since $X$ is by definition barrelled it follows that the bounded $C_0$-semigroup $\T$ is equicontinuous. Thus, we may select $p\in\Gamma_X$ and $C\geqslant0$ such that $q(T(t)x)\leqslant Cp(x)$ holds for any $x\in X$ and $t\geqslant0$.  Put $\epsilon_0=\epsilon/(C+1)$. The sets $B_{p}(x,\epsilon_0)=\{y\in X\:;\:p(x-y)<\epsilon_0\}$, $x\in \overline{B}$, form an open cover of $\overline{B}$. By the Montel property there exist $x_1,\dots,x_m\in\overline{B}$ with $\overline{B}\subseteq\cup_{n=1}^m B_{p}(x_n,\epsilon_0)$. For fixed $1\leqslant n\leqslant m$ there exists $t_{0,n}\geqslant0$ such that $q(T(t)x_n)<\epsilon_0$ holds for $t\geqslant t_{0,n}$. We put $t_0=\max_{n=1,\dots,m}t_{0,n}$. Let now $x\in B\subseteq \overline{B}$ be given. Then there exists $1\leqslant n\leqslant m$ such that $x\in B_{p}(x_n,\epsilon_0)$. For $t\geqslant t_0$ we have $q(T(t)x)\leqslant q(T(t)(x-x_
n))+q(T(t)x_n)\leqslant Cp(x-x_n)+p(T(t)x_n)\leqslant C\epsilon_0+\epsilon_0=\epsilon$. Since $x\in B$ was arbitrary we get $q_B(T(t))=\sup_{x\in B}q(T(t)x)\leqslant\epsilon$ for $t\geqslant t_0$. 
\end{proof}

\subsection{Remarks}

We start with remarks on our general setting.

\begin{remark}\label{REM-1}\textit{(Barrelledness vs.~continuity)} The proofs in the previous section show that for several single implications of our results certain assumptions can be dropped. Some of the results hold for families $\T\subseteq L(X)$ that do not enjoy the evolution property or are not strongly continuous. However, many of the latter implications require $\T$ at least to be locally bounded. If the space $X$ is quasibarrelled then the latter is equivalent to $\T$ being locally equicontinuous, see, e.g., \cite[11.2.7]{Jarchow}. If $\T$ has the evolution property then the following holds: Firstly, if $\T$ is locally equicontinuous, then strong continuity and strong continuity at zero are equivalent by \cite[Remark 1(iii)]{ABR}. Secondly, if $\T$ is strongly continuous and $X$ is barrelled, then $\T$ is locally equicontinuous by \cite[1.1]{Komura}. In view of these results it is on the one hand possible to transfer assumptions from the space to the semigroup and vice versa, at least for the 
proof 
of 
single implications of our results. On the other hand, the relations cited above show that mostly the additional requirement of one of the properties above already takes one back to the setting of a strongly continuous semigroup on a barrelled space.
\end{remark}

\begin{remark}\label{REM-2}\textit{(Mackey completeness)} In view of the question for a generator of a given semigroup $\T$ on a space $X$ a certain completeness assumption (e.g., sequential completeness) is natural and seems to be indispensable to establish its basic properties, see \cite[Section 1]{Komura}. We mention that every sequentially complete space is Mackey complete, see \cite[5.1.8]{BPC}. Concerning Remark \ref{REM-1} we remark that a Mackey complete space is barrelled if and only if it is quasibarrelled.
\end{remark}

\begin{remark}\label{REM-3}\textit{(Exponential boundedness)} Let us first mention that on a Banach space every $C_0$-semigroup is exponentially bounded in the sense of Definition \ref{DFN-SEMIGROUP}(iv). For locally convex spaces, Babalola \cite[2.7]{Babalola} considered a condition equivalent to Definition \ref{DFN-SEMIGROUP}(iv) in his definition of \textquotedblleft{}$L_A(X)$-semigroups of class $(C_0,1)$\textquotedblright{}. Vuvunikyan \cite[Definition on p.~203]{Vuvunikyan} allowed in Definition \ref{DFN-SEMIGROUP}(iv) that $M$ and $\omega$ may depend on $x$; he called semigroups of this type \emph{quasiexponential}. If $\omega$ can be selected independent of $q$, the semigroup is said to be \emph{exponentially equicontinuous}, see Albanese, Bonet, Ricker \cite[2.1(iii)]{ABR} or \emph{quasi-equicontinuous}, see Choe \cite[p.~294]{Choe}. Finally, we like to point out that an exponentially bounded semigroup is always locally equicontinuous, compare with Remark \ref{REM-1}, and that every 
equicontinuous 
semigroup is exponentially bounded. In view of Observation \ref{OBS-1} the latter can thus be considered as a necessary condition for reasonable uniform stability properties.
\end{remark}

We go on with remarks on our results.

\begin{remark}\label{REM-4}\textit{(Uniform exponential stability)} The proof of Theorem \ref{MAIN-1} has shown that for a barrelled space $X$ and a $C_0$-semigroup $\T$ also the property
\begin{equation}\label{EQ5}
\exists\:\omega>0\;\forall\:q\in\Gamma_X\;\exists\:p\in\Gamma_X,\,M\geqslant1\;\forall\:t\geqslant0,\,x\in X\colon q(T(t)x)\leqslant{}Me^{-\omega t}p(x)
\end{equation}
is equivalent to uniform exponential stability. The above is the definition of exponential resp.~quasi-equicontinuity (see Remark \ref{REM-3}) with strictly negative $\omega$.
\end{remark}

\begin{remark}\label{REM-5}\textit{(Super polynomial stability)} The proof of Theorem \ref{MAIN-2} has shown that for a barrelled space $X$ and an exponentially bounded $C_0$-semigroup $\T$ also the property
\begin{equation}
  \label{EQ6}
  \begin{gathered}
    \forall\text{ or, equivalently, }\exists\:\beta\geqslant1\;\forall\:q\in\Gamma_X\;\exists\:p\in\Gamma_X,\,M\geqslant1\;\forall\:x\in X\colon\\
    \int_0^\infty q(T(t)x)^{\beta}dt\leqslant Mp(x)^{\beta}
  \end{gathered}
\end{equation}
is equivalent to super polynomial stability, see \eqref{EQ0} and, with $\beta\geqslant1$ as selected at the beginning of \textquotedblleft{}(v$_{\exists}$)$\Rightarrow$(i)\textquotedblright{} in the proof of Theorem \ref{MAIN-2}, \eqref{EQ1}. Condition \eqref{EQ6} corresponds to the condition used by Datko \cite[p.~615]{Datko} in his original version of Theorem B in Section \ref{Intro}. In the proof mentioned above, we used an iteration argument starting with \eqref{EQ1}. In the classical situation of a Banach space, where $p=q=\|\cdot\|_X$, it is possible to keep track of the right hand side's constants during this iteration. A summation over all iterated estimates then ends up with the series representation of the exponential function and yields uniform exponential stability of $\T$. On locally convex spaces the following variant is possible:
\smallskip
\\Consider the following statements.
\begin{itemize}\smallskip
\item[(i)]$\displaystyle\exists\:\beta\geqslant1\;\forall\:q\in\Gamma_X\;\exists\:N\geqslant1\;\forall\:x\in X\colon \int_0^\infty q(T(t)x)^{\beta}dt\leqslant Nq(x)^{\beta}$.\smallskip
\item[(ii)]$\displaystyle\forall\:q\in\Gamma_X\;\exists\:p\in\Gamma_X,\,M\geqslant1,\,\omega>0\:\forall\:t\geqslant0,\,x\in X\colon q(T(t)x)\leqslant Me^{-\omega{}t}p(x)$.\smallskip
\end{itemize}
If (i) holds for some fundamental system $\Gamma_X$, then (ii) holds for any fundamental system $\Gamma_X$.
\smallskip
\\In order to see this, fix a fundamental system $\Gamma^0_X$ such that (i) holds. Observe that Definition \ref{DFN-SEMIGROUP}(iv) holds for $\Gamma_X^0$. Modify the ordering of the quantifies in the induction starting with \eqref{EQ1} to show
\begin{equation}
  \label{EQ4}
  \begin{gathered}
    \exists\:\beta\geqslant1\:\forall\:q_0\in\Gamma^0_X\;\exists\:p_0\in\Gamma^0_X,\,N\geqslant 1\;\forall\:n\in\mathbb{N},\,t\geqslant0,\,x\in X\colon\\
    t^n q_0(T(t)x)^{\beta}\leqslant n!N^{n+1} p_0(x)^{\beta}.
  \end{gathered}
\end{equation}
Let now $\Gamma_X$ be an arbitrary fundamental system and let $q\in\Gamma_X$ be given. Select $c_0\geqslant1$ and $q_0\in\Gamma^0_X$ such that $q\leqslant c_0q_0$ holds. Choose $p_0$ and $N$ as in \eqref{EQ4}. Put $\omega=(2N\beta)^{-1}$ and $M=(2N)^{1/\beta}\,c_0C_0$. Select $p\in\Gamma_X$ and $C_0\geqslant1$ such that $p_0\leqslant C_0p$ holds. Let $t\geqslant0$ and $x\in X$ be given. By the estimate in \eqref{EQ4} we have
\begin{eqnarray*}
\frac{1}{n!}\Bigl(\frac{t}{2N}\Bigr)^n q(T(t)x)^{\beta}
&\leqslant&\frac{1}{n!}\Bigl(\frac{t}{2N}\Bigr)^n c_0^{\beta}\,q_0(T(t)x)^{\beta}\\
&\leqslant&\frac{N}{2^n}\,c_0^{\beta}\,p_0(x)^{\beta}\\
&\leqslant{}&
\frac{N}{2^n}(c_0C_0)^{\beta}\,p(x)^{\beta}
\end{eqnarray*}
for any $n\in\mathbb{N}$. Summation over $n$ yields
\begin{eqnarray*}
e^{\frac{t}{2N}}q(T(t)x)^{\beta}
&=&\sum_{n=0}^{\infty}\frac{1}{n!}\Bigl(\frac{t}{2N}\Bigr)^n q(T(t)x)^{\beta}\\
&\leqslant&\sum_{n=0}^{\infty}\frac{N}{2^n}\,(c_0C_0)^{\beta}\,p(x)^{\beta}\\
&=&2N\,(c_0C_0)^{\beta}\,p(x)^{\beta},
\end{eqnarray*}
i.e.,
$$
q(T(t)x)
\leqslant
(2N e^{-\frac{t}{2N}})^{1/\beta}\,c_0C_0\,p(x)
=
(2N)^{1/\beta} e^{-\frac{t}{2N\beta}}\,c_0C_0\,p(x)
=
M e^{-\omega t}p(x)
$$
which shows (ii).
\smallskip
\\Let us add, that (ii) coincides with the definition of exponential boundedness (Definition \ref{DFN-SEMIGROUP}(iv)), but with strictly negative $\omega$. The latter implies that $\T$ is pseudo uniformly exponentially and -- even without Mackey completeness -- super polynomially stable. However, Example \ref{MULT-4} shows that (i) and (ii) are not sufficient for strong exponential stability -- even not on a Fr\'{e}chet space.
\end{remark}

\begin{remark}\label{REM-6}\textit{(Uniform stability)} The condition
\begin{equation}\label{EQ7}
\forall\:q\in\Gamma_X\;\exists\:p\in\Gamma_X\colon\lim_{t\rightarrow\infty}\sup_{p(x)\leqslant1}q(T(t)x)=0
\end{equation}
is sufficient for uniform stability of $\T$. If $X$ is a Banach space, the expression $\sup_{p(x)\leqslant1}q(T(t)x)$ is -- up to a constant -- the operator norm of $T(t)$. However, for infinite dimensional non-normed $X$ the set $\{x\in X\:;\:p(x)\leqslant1\}$ is never bounded by Kolmogoroff's theorem, see K\"othe \cite[p.~160]{KoetheI}. In the case of uniform exponential and super polynomial stability, cf.~\textquotedblleft{}(i)$\Leftrightarrow$(ii)\textquotedblright{} in Theorem \ref{MAIN-1} and \textquotedblleft{}(i)$\Leftrightarrow$(iii)\textquotedblright{} in Theorem \ref{MAIN-2}, the corresponding conditions are in fact equivalent. In the \textquotedblleft{}non-weighted\textquotedblright{} situation this equivalence, i.e., \textquotedblleft{}\eqref{EQ7}$\Leftrightarrow$ Definition \ref{DFN-STABILITY}(iv)\textquotedblright{}, is not even true for Fr\'{e}chet-Montel spaces, see Example \ref{MULT-8}.
\end{remark}

%%%%%%%%%%%%%%%%%%%%%%%%%%%%%%%%%%%%%%%%%%%%%%%%%%%%%%%%%%%%%%%%%%%%%%%%%%%%%%%%%%%%%%%
%                                                                                     %
%  5 Applications                                                                     %
%                                                                                     %
%%%%%%%%%%%%%%%%%%%%%%%%%%%%%%%%%%%%%%%%%%%%%%%%%%%%%%%%%%%%%%%%%%%%%%%%%%%%%%%%%%%%%%%
\section{Applications to Evolution Equations}\label{Applications}

\subsection{Transport equation on the Schwartz space}\label{TRANS-SCHWARTZ}

For a fixed function $q\colon\mathbb{R}\rightarrow\mathbb{C}$ let us consider the Cauchy problem
$$\CPa\;\;\;
\begin{cases}
\,\frac{\partial}{\partial{}t}u(t,x)=\frac{\partial}{\partial{}x}u(t,x)+q(x)u(t,x)\text{ for }t\geqslant0,\:x\in\mathbb{R},\\
\,\hspace{8.5pt}u(0,x)=u_0(x)\text{ for }x\in\mathbb{R}
\end{cases}
$$
where the initial value $u_0$ belongs to the space
$$
\SR=\big\{f\in C^{\infty}(\mathbb{R})\:;\:\forall\:k,\,n\in\mathbb{N}\colon\lim_{|x|\rightarrow\infty} x^kf^{(n)}(x)=0\big\}
$$
of rapidly decreasing functions which is endowed with the topology given by the system of seminorms $(\|\cdot\|_N)_{N\in\mathbb{N}}$, where
$\|f\|_N=\max_{k,n\leqslant N}\sup_{x\in\mathbb{R}}|x^k\frac{d^n}{dx^n}f(x)|$.
\smallskip
\\If $q\equiv0$, \cite[Examples 4.2 and 6.2]{Choe} provides that for any $u_0\in\SR$ there is a unique solution of $\CPa$ in $\SR$ and that all these solutions are given by the exponentially equicontinuous $C_0$-semigroup $\T$ of right shifts on $\SR$, i.e., $(T(t)f)(x)=f(x+t)$ for $t\geqslant0$, $x\in\mathbb{R}$ and $f\in\SR$. Since
$$
\|T(t)f\|_0=q_{0,0}(T(t)f)=\sup_{x\in\mathbb{R}}|f(x+t)|\geqslant|f(-t+t)|=|f(0)|
$$
holds for all $t\geqslant0$ it follows that $\lim_{t\rightarrow\infty}T(t)f=0$ cannot hold for all $f\in\SR$. Therefore the latter semigroup enjoys none of the stability properties studied in the previous sections.
\smallskip
\\If $q(x)\equiv q\in\mathbb{C}$, then the unique solutions of $\CPa$ are given by the exponentially equicontinuous $C_0$-semigroup $\T$ of scaled right shifts on $\SR$, i.e., $(T(t)f)(x)=\exp(qt)f(x+t)$ for $t\geqslant0$, $x\in\mathbb{R}$ and $f\in\SR$, cf.~\cite[Lemma 3.1]{Extra}. For $f\in\SR$, $N\in\mathbb{N}$ and $\omega\in\mathbb{R}$ we compute
\begin{eqnarray*}
\|e^{\omega{}t}T(t)f\|_N &=& \max_{k,n\leqslant{}N}\sup_{x\in\mathbb{R}}\big|x^k\frac{d^n}{dx^n}[e^{\omega{}t}e^{qt}f(x+t)]\big|\\
&=&e^{\omega{}t}|e^{qt}|\max_{k,n\leqslant{}N}\sup_{x\in\mathbb{R}}\big|[(x+t)-t]^k f^{(n)}(x+t)]\big|\\
&=&e^{(\omega+\Re{}q)t}\max_{k,n\leqslant{}N}\sup_{x\in\mathbb{R}}\big|\sum_{j=0}^{k}\mybinom{k}{j}(x+t)^j(-1)^{k-j}t^{k-j}f^{(n)}(x+t)\big|\\
&\leqslant& e^{(\omega+\Re{}q)t}\big(\max_{k\leqslant{}N}\sum_{j=0}^{k}\mybinom{k}{j} t^{k-j}\big)\big(\max_{j,n\leqslant{}N}\sup_{x\in\mathbb{R}}\big|(x+t)^jf^{(n)}(x+t)\big|\big)\\
&\leqslant&e^{(\omega+\Re{}q)t}(1+t)^{N}\|f\|_N
\end{eqnarray*}
which in view of Theorem \ref{MAIN-1}(iii) shows that $\T$ is uniformly exponentially stable if and only if $\Re{}q<0$ holds -- if $\Re q\geqslant0$ holds, a computation similar to the case $q\equiv0$ shows that $\T$ is not strongly stable.
\smallskip
\\Let now $q$ be non-constant. We assume that $q\in C^{\infty}(\mathbb{R})$ is real valued, $q(x)\leqslant0$ holds for all $x\in\mathbb{R}$ and $q$ and all its derivatives are bounded. If we rewrite $\CPa$ in the form $\ACP$ with $A=\frac{d}{dx}+M_q$, where $(M_qf)(x)=q(x)f(x)$ is the multiplication operator corresponding to $q$, it can be checked by straight forward computations that $(A,\SR)$ is the generator of the equicontinuous $C_0$-semigroup $\T$ defined by
$$
[T(t)f](x)=\exp\Bigl(\int_x^{x+t}q(\tau)d\tau\Bigr)f(x+t),
$$
for $f\in\SR$, $t\geqslant0$ and $x\in\mathbb{R}$, compare with \cite[Exercise III.1.17(5)]{EngelNagelOne}.

\begin{proposition}\label{TRANS-PROP} In the situation above consider
$$
\mu_q\colon[0,\infty)\rightarrow\mathbb{R},\;\; \mu_q(t)=\sup_{x\in\mathbb{R}}\int_x^{x+t}q(\tau)d\tau.
$$
Then the following holds.
\begin{itemize}
\item[(i)] If $\mu_q=\mathrm{\Omega}(t)$, i.e., $\liminf_{t\rightarrow\infty}\frac{|\mu_q(t)|}{t}>0$, then $\T$ is uniformly exponentially stable.\vspace{3pt}
\item[(ii)] If $\mu_q$ is bounded, then $\T$ is not strongly stable.
\end{itemize}
\end{proposition}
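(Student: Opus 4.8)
The plan is to treat the two parts separately, using that $\SR$ is a Fr\'echet space, hence Baire and barrelled, so that all the conditions listed in Theorem \ref{MAIN-1} are equivalent; in particular, for (i) it suffices to verify condition (iii), namely that $e^{\omega t}T(t)f\to0$ in $\SR$ for a single $\omega>0$ and every $f$. Throughout I would write $Q(x,t)=\int_x^{x+t}q(\tau)\,d\tau$, so that $[T(t)f](x)=e^{Q(x,t)}f(x+t)$, and record the two elementary facts that drive everything: since $q\leqslant0$ one has $Q(x,t)\leqslant\mu_q(t)\leqslant0$ for all $x$, and $\partial_xQ(x,t)=q(x+t)-q(x)$ is bounded uniformly in $x$ and $t$ because $q$ is bounded.

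For (i) I would estimate $\|e^{\omega t}T(t)f\|_N$ directly, imitating the constant-coefficient computation carried out just before the proposition. Writing $g_t(x)=e^{Q(x,t)}$, the Leibniz rule gives $\tfrac{d^n}{dx^n}[g_t(x)f(x+t)]=\sum_{m=0}^n\binom{n}{m}g_t^{(m)}(x)f^{(n-m)}(x+t)$, and by Fa\`a di Bruno each $g_t^{(m)}$ equals $g_t$ times a universal polynomial in the differences $q(x+t)-q(x),\ldots,q^{(m-1)}(x+t)-q^{(m-1)}(x)$; since $q$ and all its derivatives are bounded, this yields $|g_t^{(m)}(x)|\leqslant C_m\,g_t(x)\leqslant C_m\,e^{\mu_q(t)}$ with $C_m$ independent of $x$ and $t$. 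Expanding $x^k=((x+t)-t)^k$ by the binomial theorem exactly as in the constant case then produces a bound of the form
\begin{equation*}
\|e^{\omega t}T(t)f\|_N\leqslant e^{\omega t}\,e^{\mu_q(t)}\,(1+t)^{N}\,D_N\,\|f\|_N ,
\end{equation*}
with $D_N$ depending only on $N$ and on the sup-norms of $q,q',\ldots,q^{(N)}$. The hypothesis $\mu_q=\mathrm{\Omega}(t)$, i.e. $\liminf_{t\to\infty}|\mu_q(t)|/t>0$, means $\mu_q(t)\leqslant-ct$ for some $c>0$ and all large $t$; choosing any $0<\omega<c$ makes the right-hand side $\leqslant D_N(1+t)^{N}e^{(\omega-c)t}\|f\|_N\to0$. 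Hence condition (iii) of Theorem \ref{MAIN-1} holds and $\T$ is uniformly exponentially stable. The only non-routine bookkeeping here is the uniform bound on $g_t^{(m)}$; once the boundedness of the derivatives of $q$ is used, everything reduces to the case $q\equiv\text{const}$.

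For (ii) the task is to produce a single $f\in\SR$ whose orbit does not converge to $0$. Since $q\leqslant0$ the multiplier satisfies $e^{Q(x,t)}\leqslant1$, and after the substitution $y=x+t$ one has $\|T(t)f\|_0=\sup_y e^{\int_{y-t}^{y}q}\,|f(y)|$, where the damping factor attains its best value $e^{\mu_q(t)}\geqslant e^{-M}$ precisely along the shift that realises $\mu_q(t)$. The natural plan is therefore to fix a test function with $f(y_0)\neq0$ at a single point $y_0$ and to bound $\|T(t)f\|_0\geqslant e^{\int_{y_0-t}^{y_0}q}|f(y_0)|$ from below; taking $y_0=0$ and $f(0)=1$ reduces the whole statement to showing that $\int_{-t}^{0}q$ stays bounded below as $t\to\infty$. \textbf{This last reduction is the main obstacle}: boundedness of $\mu_q$ controls the integral of $q$ only along the \emph{optimally placed} intervals, whereas a fixed test function forces the output point into a prescribed region, so one must exclude the possibility that the mass of $q$ escapes in such a way that every interval ending near $y_0$ still accumulates unbounded mass. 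I expect the heart of the argument to be a monotonicity analysis of the primitive $F(x)=\int_0^x q$ (which is non-increasing, so that $\mu_q(t)=\sup_x[F(x+t)-F(x)]$ is itself non-increasing and its boundedness is equivalent to a finite limit), used to relocate the near-maximising intervals to a fixed right endpoint and thereby convert the boundedness of $\mu_q$ into the required lower bound $\inf_t\int_{-t}^{0}q>-\infty$; with that in hand the choice $f(0)\neq0$ immediately contradicts strong stability.
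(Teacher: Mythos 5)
Your part (i) is correct and is essentially the paper's own proof: the Leibniz/Fa\`a di Bruno bookkeeping, with the differences $q^{(m-1)}(x+t)-q^{(m-1)}(x)$ bounded uniformly in $x$ and $t$, yields exactly the paper's estimate $\|T(t)f\|_N\leqslant C\,e^{\mu_q(t)}(1+t)^N\|f\|_N$, after which the choice of $\omega$ below $\liminf_{t\rightarrow\infty}|\mu_q(t)|/t$ and Theorem \ref{MAIN-1}(iii) finish the argument.

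In part (ii) there is a genuine gap, and it is exactly the one you flagged: you never prove the ``relocation'' lemma that would convert boundedness of $\mu_q$ into $\inf_t\int_{-t}^{0}q>-\infty$, and no proof can exist, because the implication is false. Take
$$
q(s)=-\tfrac{1}{2}\,(1+s^2)^{-1/4},
$$
which is admissible (strictly negative, smooth, bounded with all derivatives bounded). Since $q(s)\rightarrow0$ as $|s|\rightarrow\infty$, for each fixed $t$ one has $\int_x^{x+t}q\rightarrow0$ as $x\rightarrow+\infty$, so $\mu_q\equiv0$ is bounded; yet $\int_{y-t}^{y}|q|\sim\sqrt{t}\rightarrow\infty$ for \emph{every} fixed $y$, so no fixed evaluation point can give a lower bound: the near-optimal intervals escape to infinity. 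Worse, for this $q$ the conclusion of Proposition \ref{TRANS-PROP}(ii) itself fails, i.e.\ $\T$ \emph{is} strongly stable. Indeed, writing $G(u)=\int_0^u|q(s)|\,ds$ (so $G(u)$ behaves like $\pm\sqrt{|u|}$) and substituting $y=x+t$, every seminorm of $T(t)f$ is controlled, up to the constants from part (i), by terms $\sup_y|y-t|^k\,e^{-(G(y)-G(y-t))}|f^{(j)}(y)|$. On $\{|y|\leqslant t^{2/3}\}$ one has $G(y)-G(y-t)\geqslant\tfrac12\sqrt{t}$ for large $t$, and $e^{-\sqrt{t}/2}$ beats the weight $(2t)^k$; on $\{|y|\geqslant t^{2/3}\}$ one bounds the damping factor by $1$ and uses $|f^{(j)}(y)|\leqslant C_m|y|^{-m}$ with $m>3k/2$ to beat the weight. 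Hence $\|T(t)f\|_N\rightarrow0$ for all $f\in\SR$ and all $N$. So your plan cannot be completed as stated; a correct variant of (ii) needs a stronger hypothesis, for instance $\int_{-\infty}^0|q(s)|\,ds<\infty$ (which does imply $\mu_q$ bounded), under which your choice $f(0)\neq0$ immediately gives $\|T(t)f\|_0\geqslant e^{-\int_{-\infty}^0|q|}\,|f(0)|>0$. You should also know that the paper's own proof of (ii) founders on precisely the interchange you refused to make: it asserts $\sup_x|e^{\int_x^{x+t}q}f(x+t)|=e^{\mu_q(t)}\sup_x|f(x+t)|$, replacing the supremum of a product by the product of suprema, which is illegitimate because the points where $\int_x^{x+t}q$ is close to $\mu_q(t)$ need not be points where $|f(x+t)|$ is large. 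Your diagnosis of the ``main obstacle'' is therefore not a weakness of your write-up but a real defect of the statement and of its published proof.
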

\begin{proof} (i) Let $f\in\SR$ and $N\in\mathbb{N}$ be given. For $j\in\mathbb{N}$ we put $A_j=\{\alpha\in\mathbb{N}^j\:;\:1\cdot\alpha_1+2\cdot\alpha_2+\cdots+j\cdot\alpha_j=j\}$ and compute with Fa\`a di Bruno's formula 
\begin{eqnarray*}
\|T(t)f\|_N & = & \max_{k,n\leqslant{}N}\sup_{x\in\mathbb{R}}\big|x^k\frac{d^n}{dx^n}\bigl(\exp\Bigl(\int_x^{x+t}q(\tau)d\tau\Bigr)f(x+t)\bigr)\big|\\
& = & \max_{k,n\leqslant{}N}\sup_{x\in\mathbb{R}}\,\Bigl|\,x^k\sum_{j=0}^{n}\mybinom{n}{j} \Bigl[\sum_{\alpha\in A_j}\mybinom{j}{\alpha} \exp\Bigl(\int_x^{x+t}q(\tau)d\tau\Bigr)\\
& & \hspace{15pt}\cdot\prod_{m=1}^{j}\Bigl(\frac{q^{(m-1)}(x+t)-q^{(m-1)}(x)}{m!}\Bigr)^{\alpha_m}\Bigr] f^{(n-j)}(x+t)\Bigr|\\
&\leqslant & \sup_{x\in\mathbb{R}}\,\Bigl|\,\exp\Bigl(\int_x^{x+t}q(\tau)d\tau\Bigr)\Bigr|\cdot{}C\cdot\max_{k,n\leqslant{}N}\max_{j\leqslant{}n}\sup_{x\in\mathbb{R}}\big|x^kf^{(n-j)}(x+t)\big|\\
&\leqslant & \sup_{x\in\mathbb{R}}\,\Bigl|\,\exp\Bigl(\int_x^{x+t}q(\tau)d\tau\Bigr)\Bigr|\cdot{}C\cdot\max_{k,n\leqslant{}N}\sup_{x\in\mathbb{R}}\big|[(x+t)-t]^kf^{(n)}(x+t)\big|\\
&\leqslant & e^{\mu_q(t)}\,C\,(1+t)^N\,\|f\|_N
\end{eqnarray*}
where
$$
C=\max_{t\geqslant0}\max_{n\leqslant{}N}\sum_{j=0}^{n}\mybinom{n}{j}\sum_{\alpha\in A_j}\mybinom{j}{\alpha}\prod_{m=1}^{j}\sup_{x\in\mathbb{R}}\,\Bigl|\frac{q^{(m-1)}(x+t)-q^{(m-1)}(x)}{m!}\Bigr|^{\alpha_m}<\infty
$$
holds by our assumptions on $q$ and since $\sup_{x\in\mathbb{R}}|q^{(m-1)}(x+t)-q^{(m-1)}(x)|\leqslant\sup_{x\in\mathbb{R}}|q^{(m-1)}(x+t)|+\sup_{x\in\mathbb{R}}|q^{(m-1)}(x)|\leqslant2\sup_{x\in\mathbb{R}}|q^{(m-1)}(x)|$ is true.
\smallskip
\\ By (i) there exists $t_0\geqslant0$ such that $\nu=\inf_{t\geqslant{}t_0}\frac{|\mu_q(t)|}{t}>0$ holds. We select $\omega=\nu/2>0$ and compute $\omega{}t+\mu_q(t)\leqslant\omega{}t-\nu{}t=-\omega{}t$ for $t\geqslant t_0$. Hence,
$$
\|e^{\omega{}t}T(t)f\|\leqslant e^{\omega{}t+\mu_q(t)}\,C\,(1+t)^N\,\|f\|_N\leqslant e^{-\omega{}t}\,C\,(1+t)^N\,\|f\|_N
$$
converges to zero for $t\rightarrow\infty$. The conclusion follows from Theorem \ref{MAIN-1}(iii).
\medskip
\\(ii) Let $\mu_q$ be bounded, i.e., there is $\nu\geqslant0$ such that $|\mu_q(t)|\leqslant{}\nu$ holds for all $t$. Then
\begin{eqnarray*}
\|T(t)f\|_0 &=& \sup_{x\in\mathbb{R}}\,\Bigl|\,\exp\Bigl(\int_x^{x+t}q(\tau)d\tau\Bigr)\,f(x+t)\Bigr|\\
&=&\sup_{x\in\mathbb{R}}e^{\mu_q(t)}\big|f(x+t)\big|\geqslant\sup_{x\in\mathbb{R}}e^{-\nu}\big|f(x+t)\big|=e^{-\nu}\|f\|_{\infty}.
\end{eqnarray*}
implies that $\lim_{t\rightarrow\infty}T(t)f=0$ can only hold if $f\equiv0$.
\end{proof}

The condition in Proposition \ref{TRANS-PROP}(i) is clearly satisfied, if $q$ is bounded away from zero. The first example below shows that $\T$ may fail all stability properties if $q$ is strictly negative but accumulating zero at infinity --- even if this accumulation happens arbitrarily slow.

\begin{example}\label{TRANS-EX-1} Assume that $q(x)<0$ holds for all  $x\in\mathbb{R}$ and that there is some $x_0$ such that $q|_{[x_0,\infty)}$ is monotonically increasing with $\lim_{x\rightarrow\infty}q(x)=0$. Then $\T$ is not strongly stable. Indeed, by the mean value theorem it follows that $\mu_q(t)=\sup_{x\in\mathbb{R}}\int_x^{x+t}q(\tau)d\tau=0$ for all $t\geqslant0$ and the conclusion follows from Proposition \ref{TRANS-PROP}(ii).
\end{example}

The second example in contrast illustrates that $\T$ may be uniformly exponentially stable even if $\limsup_{x\rightarrow\infty}q(x)=0$.

\begin{example}\label{TRANS-EX-2} Let $q(x)=\sin(x)-1$. Then $\T$ is uniformly exponentially stable: For $t>0$ and $x\in\mathbb{R}$ we have $\mu_q(t)=\sup_{x\in\mathbb{R}}\int_x^{x+t}q(\tau)d\tau= \sup_{x\in\mathbb{R}}(-\cos(x+t)+\cos(x)-t)\leqslant 2-t$. Whence, $\liminf_{t\rightarrow\infty}\frac{|\mu_q(t)|}{t}\geqslant\liminf_{t\rightarrow\infty}\frac{t-2}{t}=1$ and Proposition \ref{TRANS-PROP}(i) yields the conclusion.
\end{example}

\subsection{Heat equation on the Schwartz space and on Miyadera's space H}\label{HEAT}

In contrast to our discussion of the transport equation we consider the Cauchy problem
$$\CPb\;\;\;
\begin{cases}
\,\frac{\partial}{\partial{}t}u(t,x)=\frac{\partial^2}{\partial{}x^2}u(t,x)+q\,u(x,t)\text{ for }t\geqslant0,\:x\in\mathbb{R},\\
\,\hspace{8.5pt}u(0,x)=u_0(x)\text{ for }x\in\mathbb{R}
\end{cases}
$$
for the heat equation only with a constant perturbation, i.e., $q\in\mathbb{C}$ is fixed. For every $u_0\in\SR$ the unique solution of $\CPb$ is given by the semigroup $\T$ defined by $[T(t)f](x)=\exp(qt)[S(t)f](x)$ for $f\in\SR$, $t\geqslant0$ and $x\in\mathbb{R}$, where $\Se$ denotes the Gaussian semigroup defined by
$$
[S(t)f](x)=\frac{1}{\sqrt{4\pi{}t}}\int_{\mathbb{R}} e^{-\frac{(x-y)^2}{4t}}f(y)dy
$$
for $t>0$. The latter is an exponentially equicontinuous $C_0$-semigroup by \cite[Example 6.1]{Choe}. For the proof of the next result remember that $(S(t)f)^{\scriptscriptstyle\wedge}(\xi)=e^{-\xi^2t}\hat{f}(\xi)$ holds for all $t\geqslant0$, $\xi\in\mathbb{R}$ and $f\in\SR$ and that the Fourier transform $^{\scriptscriptstyle\wedge}\colon\SR\rightarrow\SR$ is an isomorphism.

\begin{proposition}\label{HEAT-SCHWARTZ-PROP} Let $\T$ be the scaled Gaussian semigroup on $\SR$.
\begin{itemize}
\item[(i)] If $\Re q<0$, then $\T$ is uniformly exponentially stable.\vspace{3pt}
\item[(ii)] If $\Re q\geqslant0$, then $\T$ is not strongly stable.
\end{itemize}
In particular, the Gaussian semigroup $\Se$ itself is not strongly stable, cf.~the situation on e.g., $L^p(\mathbb{R})$, $1<p<\infty$, studied by Arendt, Batty, B\'{e}nilan \cite[Proposition 3.1]{ABB1992}. 
\end{proposition}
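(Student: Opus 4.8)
The plan is to conjugate by the Fourier transform, which carries the scaled Gaussian semigroup into a multiplication semigroup and reduces both assertions to elementary estimates. Since $^{\scriptscriptstyle\wedge}\colon\SR\to\SR$ is a topological isomorphism, it maps bounded sets onto bounded sets and transports the fundamental system of seminorms into another one; consequently both uniform exponential stability and strong stability are invariant under the conjugation $M(t)={}^{\scriptscriptstyle\wedge}\circ T(t)\circ({}^{\scriptscriptstyle\wedge})^{-1}$. Using
\[
(T(t)f)^{\scriptscriptstyle\wedge}(\xi)=e^{qt}e^{-\xi^2t}\hat f(\xi)=e^{(q-\xi^2)t}\hat f(\xi),
\]
this conjugate is exactly the multiplication semigroup $M(t)g=e^{(q-\xi^2)t}g$ on $\SR$ (which is again a $C_0$-semigroup), and it suffices to study $M(t)$.

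For (i), I would use that $\SR$ is a Fr\'echet space, hence barrelled, so that by Theorem \ref{MAIN-1} uniform exponential stability of $M(t)$ is equivalent to condition (iii) there, i.e.\ to the existence of $\omega>0$ with $e^{\omega t}M(t)g\to0$ for every $g\in\SR$. For fixed $N$ one has
\[
\|e^{\omega t}M(t)g\|_N=e^{(\omega+\Re q)t}\,\|e^{-t\xi^2}g\|_N,
\]
so the task is to control the multiplication factor. Applying the Leibniz rule to $\tfrac{d^n}{d\xi^n}(e^{-t\xi^2}g)$ and writing $\tfrac{d^j}{d\xi^j}e^{-t\xi^2}=(-1)^jt^{j/2}H_j(\sqrt t\,\xi)e^{-t\xi^2}$ with the Hermite polynomials $H_j$, together with the uniform bounds $\sup_{\xi}|\xi|^l e^{-t\xi^2}\leqslant C_l\,t^{-l/2}$, yields a polynomial bound $\|e^{-t\xi^2}g\|_N\leqslant C_N(1+t)^{d_N}\|g\|_N$. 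Choosing $0<\omega<|\Re q|$ makes $\omega+\Re q<0$, whence $e^{(\omega+\Re q)t}(1+t)^{d_N}\to0$; thus (iii) holds for every $g$ and every $N$, which by Theorem \ref{MAIN-1} gives uniform exponential stability.

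For (ii), it suffices to exhibit a single $g\in\SR$ with $M(t)g\not\to0$. Taking $g(\xi)=e^{-\xi^2}$ and the seminorm $\|\cdot\|_0=\sup_{\xi}|\cdot|$, evaluation at $\xi=0$ gives
\[
\|M(t)g\|_0=\sup_{\xi}e^{(\Re q-\xi^2)t}|g(\xi)|\geqslant e^{\Re q\,t}|g(0)|=e^{\Re q\,t}\geqslant1
\]
for all $t\geqslant0$, since $\Re q\geqslant0$. Hence $M(t)g$ does not converge to zero, so $M(t)$ — and therefore $\T$ — is not strongly stable; the case $q=0$ yields the concluding assertion about $\Se$. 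It is worth stressing that the Fourier reduction is essential here: the plain seminorm $\|T(t)f\|_0=\sup_x|S(t)f(x)|$ does tend to zero, and the obstruction to strong stability lives in a higher seminorm, which is precisely what $\|M(t)g\|_0$ detects.

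The only genuine computation is the polynomial-in-$t$ bound on $\|e^{-t\xi^2}g\|_N$ in part (i); this is the expected main obstacle, but it is a routine Schwartz-space estimate once the Fourier reduction has eliminated the convolution integral defining $S(t)$. Everything else follows directly from Theorem \ref{MAIN-1} and the invariance of the two stability notions under the Fourier isomorphism.
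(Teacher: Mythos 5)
Your proof is correct and takes essentially the same route as the paper's: both pass to the Fourier side where the semigroup becomes multiplication by $e^{(q-\xi^2)t}$, establish a polynomial-in-$t$ bound on the weighted sup-seminorms of $e^{-t\xi^2}g$ (you via Hermite polynomials, the paper via Fa\`a di Bruno's formula) so that Theorem \ref{MAIN-1}(iii) applies for any $0<\omega<-\Re q$, and defeat strong stability by evaluating the Fourier transform at the origin, where the multiplication factor has modulus $e^{t\Re q}\geqslant1$. The only cosmetic difference is that you conjugate by the Fourier transform and invoke invariance of the stability notions under a topological isomorphism, whereas the paper transfers the estimates directly through the seminorm inequalities $\|g\|_N\leqslant C\|\hat g\|_M$ and $\|\hat g\|_0\leqslant C^{-1}\|g\|_N$.
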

\begin{proof} (i) We fix $N\in\mathbb{N}$, $f\in\SR$ and $t\geqslant1$. We select $M\in\mathbb{N}$ and $C>0$ such that $\|g\|_N\leqslant C \|\hat{g}\|_M$ holds for all $g\in\SR$. For arbitrary $t\geqslant1$ and with $g=T(t)f$ we obtain
\begin{eqnarray*}
\frac{1}{C} \|T(t)f\|_N & \leqslant & \max_{k,n\leqslant M}\sup_{\xi\in\mathbb{R}}|\xi^k\frac{d^n}{d\xi^n}(e^{qt-\xi^2t}f(\xi))|\\
&=& |e^{qt}| \max_{k,n\leqslant M} \sup_{\xi\in\mathbb{R}}\big|\xi^k\sum_{j=0}^{n}\mybinom{n}{j} \big[\frac{d^j}{d\xi^j}e^{-\xi^2t}\big] f^{(n-j)}(\xi)\big|\\
&=&e^{t\Re q}\max_{k,n\leqslant M} \sup_{\xi\in\mathbb{R}}\big|\sum_{j=0}^{n}\mybinom{n}{j} \big[\sum_{\alpha\in A_j}\mybinom{j}{\alpha}e^{-\xi^2t}\prod_{m=1}^{j}\Bigl(\frac{(-(\cdot)^2t)^{(m)}(\xi)}{m!}\Bigr)^{\alpha_m}\big] \xi^kf^{(n-j)}(\xi)\big|\\
&\leqslant& e^{t\Re q}\max_{k,n\leqslant M} \sup_{\xi\in\mathbb{R}}\big|\sum_{j=0}^{n}\mybinom{n}{j} \big[\sum_{\alpha\in A_j}\mybinom{j}{\alpha}e^{-\xi^2t}\Bigl(\frac{-2\xi t}{1}\Bigr)^{\alpha_1}\Bigl(\frac{-2t}{2}\Bigr)^{\alpha_2}\cdot1\big]\big|\\
& &\hspace{15pt}\cdot\max_{j\leqslant n}\sup_{x\in\mathbb{R}}|\xi^kf^{(n-j)}(\xi)|\\
&\leqslant& e^{t\Re q}\|f\|_M\max_{n\leqslant M}\sum_{j=0}^{n}\mybinom{n}{j} \big[\sum_{\alpha\in A_j}\mybinom{j}{\alpha}\sup_{\xi\in\mathbb{R}}e^{-\xi^2t}\,2^{\alpha_1}|\xi|^{\alpha_1}t^{\alpha_1+\alpha_2}\big]\\
&\leqslant& e^{t\Re q}\|f\|_M\,K\,t^{M}\,\sup_{\xi\in\mathbb{R}}e^{-\xi^2t}\,|\xi|^{M}\;\;=\;\;e^{t\Re q}\|f\|_M\,K\,t^{M/2}
\end{eqnarray*}
where $K=\max_{n\leqslant M}\sum_{j=0}^{n}\tbinom{n}{j} \sum_{\alpha\in A_j}\tbinom{j}{\alpha}\,2^{\alpha_1}$ and we again used Fa\`a di Bruno's formula, see Proposition \ref{TRANS-PROP}. Finally, we select $0<\omega<-\Re q$ and obtain that $\|e^{\omega{}t}T(t)f\|_N\leqslant e^{(\omega-\Re q)t}\|f\|_M\,KC\,t^{M/2}$ tends to zero for $t\rightarrow\infty$.
\medskip
\\(ii) We select $C>0$ and $N\in\mathbb{N}$ such that $\|\hat{g}\|_0\leqslant C^{-1}\|g\|_N$ holds for all $g\in\SR$. We select $f\in\SR$ such that $\hat{f}(0)=1$ and put in the above estimate $g=S(t)f$. Then we compute
\begin{eqnarray*}
\|S(t)f\|_N &\geqslant& C\|(S(t)f)^{\scriptscriptstyle\wedge}\|_0=C\|e^{-(\cdot)^2t}\hat{f}(\cdot)\|_0\\
&=& C\sup_{\xi\in\mathbb{R}}|e^{-\xi^2t}\hat{f}(\xi)|\geqslant C\hat{f}(0)=C>0
\end{eqnarray*}
and obtain $\|T(t)f\|_N=\|e^{qt}S(t)f\|_N\geqslant e^{t\Re q}\,C$, which tends to infinity for $t\rightarrow\infty$.
\end{proof}

\smallskip
Let us finally consider the heat equation $\CPb$ on a Fr\'{e}chet space introduced by Miyadera \cite[Section 6]{Miyadera}, see also Choe \cite[Example 6.4]{Choe}: Let $H$ be the space of all real-valued $C^{\infty}$--functions on $\mathbb{R}$ whose partial derivatives of all orders belong to $L^2(\mathbb{R})$. We endow $H$ with the topology given by all Sobolev norms, i.e., we consider the system of seminorms $(\|\cdot\|_n)_{n\in\mathbb{N}}$ with
$$
\|f\|_n=\sum_{j\leqslant{}n}\|f^{(j)}\|_{L^2(\mathbb{R})}=\sum_{j\leqslant{}n}\Bigl(\int_{\mathbb{R}}|f^{(j)}(x)|^2dx\Bigr)^{1/2}.
$$
Using Sobolev's embedding theorem, e.g., Adams \cite[Theorem 4.12]{Adams}, it follows that $H=\proj{n\in\mathbb{N}}H^n(\mathbb{R})$ holds where $H^n(\mathbb{R})$ denotes the $n$-th Sobolev space.
\smallskip
\\As on the Schwartz space, for every $u_0\in H$ the unique solution of $\CPb$ is given by the scaled Gaussian semigroup, which is also on $H$ an exponentially equicontinuous $C_0$-semigroup, cf.~\cite[p.~316]{Choe}.

\begin{proposition}\label{HEAT-MIYADERA-PROP} Let $\T$ be the scaled Gaussian semigroup on $H$. 
\begin{itemize}
\item[(i)] If $\Re q<0$, then $\T$ is uniformly exponentially stable.\vspace{3pt}
\item[(ii)] If $\Re q\geqslant0$, then $\T$ is not uniformly stable.\vspace{3pt}
\item[(iii)] If $\Re q\leqslant0$, then $\T$ is strongly stable.
\end{itemize}
\end{proposition}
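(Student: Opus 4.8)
The plan is to push everything onto the Fourier side, where both the Sobolev seminorms and the scaled Gaussian semigroup diagonalise simultaneously. Normalising the Fourier transform so that it is an $L^2$-isometry, the identity $\widehat{f^{(j)}}(\xi)=(i\xi)^j\hat f(\xi)$ together with Plancherel's theorem gives $\|f^{(j)}\|_{L^2}=\||\xi|^j\hat f\|_{L^2}$, and since $(T(t)f)^{\scriptscriptstyle\wedge}(\xi)=e^{qt}e^{-\xi^2t}\hat f(\xi)$ one obtains the master formula
\begin{equation*}
\|T(t)f\|_n=e^{t\Re q}\sum_{j\leqslant n}\Bigl(\int_{\mathbb{R}}|\xi|^{2j}e^{-2\xi^2t}|\hat f(\xi)|^2\,d\xi\Bigr)^{1/2},
\end{equation*}
valid for all $f\in H$, $n\in\mathbb{N}$ and $t\geqslant0$. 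All three assertions will be read off from this single expression.

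First I would dispose of (i) and (iii), which become routine once the formula is in place. For (i) the elementary bound $e^{-2\xi^2t}\leqslant1$ yields $\|T(t)f\|_n\leqslant e^{t\Re q}\|f\|_n$; choosing any $\omega$ with $0<\omega<-\Re q$ gives $\|e^{\omega t}T(t)f\|_n\leqslant e^{(\omega+\Re q)t}\|f\|_n\to0$ for every $f$ and every $n$, which is exactly Theorem \ref{MAIN-1}(iii). As $H$ is a Fr\'echet space and hence barrelled, this is equivalent to uniform exponential stability. For (iii), with $\Re q\leqslant0$ we have $e^{t\Re q}\leqslant1$, while for fixed $f$ and $j$ the integrand $|\xi|^{2j}e^{-2\xi^2t}|\hat f(\xi)|^2$ tends to $0$ pointwise almost everywhere as $t\to\infty$ and is dominated by $|\xi|^{2j}|\hat f(\xi)|^2\in L^1(\mathbb{R})$; dominated convergence forces each summand, and hence $\|T(t)f\|_n$, to $0$, so that $\T$ is strongly stable.

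The real work is (ii), where I would exhibit an explicit bounded set on which $T(t)$ stays bounded below. Fix a real, even, non-trivial $\phi\in C_c^{\infty}(\mathbb{R})$ and, for $0<s\leqslant1$, define $f_s$ through $\hat f_s(\xi)=s^{-1/2}\phi(\xi/s)$; then $f_s\in\SR\subseteq H$ is real valued, and the change of variables $\eta=\xi/s$ gives $\||\xi|^j\hat f_s\|_{L^2}=s^{j}\||\eta|^j\phi\|_{L^2}$, so that $B=\{f_s:0<s\leqslant1\}$ is bounded in $H$, each seminorm being bounded by its value at $s=1$. The point of the scaling is that $\|S(t)f_s\|_{L^2}^2=\int_{\mathbb{R}}e^{-2s^2\eta^2t}|\phi(\eta)|^2\,d\eta$ is invariant under the substitution $s=t^{-1/2}$: for $t\geqslant1$ one gets $\|S(t)f_{t^{-1/2}}\|_{L^2}^2=\int_{\mathbb{R}}e^{-2\eta^2}|\phi(\eta)|^2\,d\eta=:c^2>0$, a constant independent of $t$. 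Since $\Re q\geqslant0$ gives $\|T(t)f\|_0=e^{t\Re q}\|S(t)f\|_0\geqslant\|S(t)f\|_0$, it follows that $\sup_{f\in B}\|T(t)f\|_0\geqslant c>0$ for all $t\geqslant1$, so $\T$ fails Definition \ref{DFN-STABILITY}(vi). I expect the normalisation of $f_s$ to be the delicate point: one must keep the family bounded in \emph{every} Sobolev seminorm (which forces the factor $s^{-1/2}$ to be tempered by $s\leqslant1$) and at the same time prevent the $L^2$-mass from escaping as the frequency support contracts to the origin, and it is precisely the choice $\hat f_s(\xi)=s^{-1/2}\phi(\xi/s)$ coupled with the time-dependent scale $s=t^{-1/2}$ that reconciles the two requirements.
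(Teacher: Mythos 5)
Your proof is correct and takes essentially the same route as the paper's: both pass to the Fourier side, where the scaled Gaussian semigroup becomes multiplication by $e^{qt-\xi^2t}$ on a weighted-$L^2$ projective limit, prove (i) from the trivial bound $e^{-2\xi^2t}\leqslant1$, and refute uniform stability in (ii) via an $L^2$-normalised bounded family concentrating at zero frequency, evaluated at times matched to the concentration scale. The remaining differences are cosmetic: the paper's family is $f_k=k^{1/2}\chi_{[0,1/k]}$ run at $t=k/2$, which is your $s^{-1/2}\phi(\xi/s)$, $s=t^{-1/2}$, with $\phi$ a characteristic function instead of a smooth even bump (your choice has the small advantage of keeping the counterexample genuinely real-valued in $H$), and in (iii) the paper splits off the $j=0$ term and cites \cite[Proposition 3.1]{ABB1992} for the $L^2$ heat decay where you use dominated convergence.
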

\begin{proof}For $n\in\mathbb{N}$ we define the space
$$
\hat{H}^{n}(\mathbb{R})=\big\{f\in L^2(\mathbb{R})\:;\:|f|_{n}^2=\int_{\mathbb{R}}(1+|x|^2)^{n}|f(x)|^2dx<\infty\big\}
$$
endowed with the norm $|\cdot|_{(n)}$. Then, the Fourier transform $\mathcal{F}\colon H^n(\mathbb{R})\rightarrow \hat{H}^{n}(\mathbb{R})$ is an isomorphism, we have $\hat{H}^{n+1}\subseteq \hat{H}^{n}$ with continuous inclusion and the diagram
\begin{diagram}[height=2.2em,width=2.3em]
\cdots &\rTo &                     &                    & \hat{H}^{n+1}(\mathbb{R})  &                         & \rTo                   &                      & \hat{H}^{n}(\mathbb{R}) &                         & \rTo       &      & \cdots \\
       &     &                     &\ruTo^{\mathcal{F}} &                            &\rdTo^{\mathcal{F}^{-1}} &                        & \ruTo^{\mathcal{F}}  &                         &\rdTo^{\mathcal{F}^{-1}} &            &      &        \\
\cdots &\rTo & H^{n+1}(\mathbb{R}) &                    & \rTo                       &                         & H^{n}(\mathbb{R})      &                      & \rTo                    &                         & H^{n-1}(\mathbb{R}) & \rTo & \cdots \\
\end{diagram}
with inclusion maps at any unlabeled arrow, commutes. We define $\hat{H}=\proj{n}\hat{H}^n$ and get that $\mathcal{F}\colon H\rightarrow \hat{H}$ is an isomorphism and it is enough to consider the semigroup $(\hat{T}(t))_{t\geqslant0}$, $[\hat{T}(t)f](x)=e^{qt-x^2t}f(x)$ for $f\in\hat{H}$, $t\geqslant0$ and $x\in\mathbb{R}$.
\smallskip
\\(i) We have
\begin{eqnarray*}
|\hat{T}(t)f|_n^2 &=& \int_{\mathbb{R}}(1+|x|^2)^n |e^{qt-x^2t}f(x)|^2dx\\
&\leqslant& e^{2(\Re q)t}\int_{\mathbb{R}}(1+|x|^2)^n|f(x)|^2dx\leqslant e^{2(\Re q)t}|f|_n^2,
\end{eqnarray*}
i.e., $|\hat{T}(t)f|_n\leqslant e^{(\Re q)t}|f|_n$ for every $n\in\mathbb{N}$ and thus uniform exponential stability follows.
\medskip
\\(ii) We define $B=\{f_k\:;\:k\in\mathbb{N}\}\subseteq L^2(\mathbb{R})$ via $f_k(x)=k^{1/2}$ for $x\in[0,1/k]$ and zero otherwise. Then, $|f_k|_n^2=\int_{\mathbb{R}}(1+|x|^2)^n|f_k(x)|^2dx\leqslant{}\int_0^{1/k}\,2^n\,|k^{1/2}|^2dx=2^n$ holds for every $n$ and $B\subseteq\hat{H}$ is bounded. For arbitrary $k$ we have $|\hat{T}(\frac{k}{2})f_k|^2_0=\int_{\mathbb{R}}|e^{-x^2\frac{k}{2}}f_k(x)|^2dx =\int_{\mathbb{R}}e^{-x^2 k}|k^{1/2}|^2dx\geqslant{}k\int_0^{1/k}e^{-xk}dx=\frac{e-1}{e}$
which provides that $\sup_{f\in B}|\hat{T}(t)f|_0$ cannot converge to zero for $t\rightarrow\infty$.
\medskip
\\(iii) It remains to check strong stability. We fix $f\in\hat{H}$ and $n\in\mathbb{N}$. Then we have
\begin{eqnarray*}
|\hat{T}(t)f|_n^2 &=& \int_{\mathbb{R}}(1+|x|^2)^n |e^{qt-x^2t}f(x)|^2dx\\
&\leqslant&\int_{\mathbb{R}}e^{-2x^2t}|f(x)|^2dx + \sum_{k=1}^{n}\mybinom{n}{k}\sup_{x\in\mathbb{R}}|x|^{k} e^{-2x^2t}\int_{\mathbb{R}}|f(x)|^2dx\\
&= & \|e^{-(\cdot)^2t}f\|_{L^2(\mathbb{R})}^2 + \sum_{k=1}^{n}\mybinom{n}{k}\,t^{-k/2}\,\Bigl(\frac{k}{2e}\Bigr)^{k/2}\,\|f\|_{L^2(\mathbb{R})}^2
\end{eqnarray*}
which converges to zero for $t\rightarrow\infty$; for the second summand this is clear and for the first see \cite[Proposition 3.1]{ABB1992}.
\end{proof}

\begin{remark}\label{MULTIPLICATION-REM} In this section we in fact studied a multiplication semigroup on $\SR$ resp.~$\hat{H}$. Its exponent was determined by the differential equation $\CPb$. However, a general study of the multiplication semigroup $\T$ on $\SR$ defined by $[T(t)f](x)=\exp(q(x)t)f(x)$ for $f\in\SR$, $x\in\mathbb{R}$ and $t\geqslant0$ is possible: If for instance $q$ is a $C^{\infty}$--function which attains only non-positive real values and if $q$ and all its derivatives are bounded, then straight forward computations show that $\T$ is a $C_0$-semigroup with generator $(M_q,\SR)$. Similar to Example \ref{MULT-5} it is then possible to construct $q$, e.g., $q(x)=-1/|x|$ for $|x|\geqslant1$ and suitable in between, such that $\T$ is polynomially stable but not uniformly exponentially stable. 
\end{remark}

\section*{Acknowledgements}
The authors like to thank the referee for his careful work. In addition they like to thank L.~Frerick, T.~Kalmes and J.~Wengenroth for pointing out a mistake in an earlier version of Theorem \ref{MAIN-1} and for explaining Example \ref{MULT-2} to them.

%\bibliographystyle{spmpsci} 
%\bibliography{bib}  

\end{document}